\documentclass[svjour3]{sn-jnl}%
\usepackage{graphicx}
\usepackage{multirow} 
\usepackage{amsmath,amssymb,amsfonts}%
\usepackage{amsthm}%
\usepackage{mathrsfs}%
\usepackage[title]{appendix}%
\usepackage{soul, color, xcolor}%
\usepackage{textcomp}%
\usepackage{manyfoot}%
\usepackage{booktabs}%
\usepackage{algorithm}
\usepackage{algorithmicx}
\usepackage{algpseudocode}
\usepackage{listings}%
\usepackage{subfigure}

\usepackage{makecell} 
\usepackage{enumitem} 
\soulregister{\cite}7 
\soulregister{\citep}7 
\soulregister{\citet}7 
\soulregister{\ref}7 
\soulregister{\pageref}7 

\allowdisplaybreaks 

\numberwithin{equation}{section}
\numberwithin{table}{section}
\numberwithin{figure}{section}

\newtheorem{theorem}{Theorem}[section]

\newtheorem{lemma}[theorem]{Lemma}
\newtheorem{proposition}[theorem]{Proposition}

\newtheorem{remark}[theorem]{Remark}
\newtheorem{example}[theorem]{Example}
\newtheorem{experiment}[theorem]{Experiment}

\newcommand{\R}{\mathbb{R}}
\newcommand{\E}{\mathbb{E}}

\newcommand{\calh}{\mathcal{H}}
\newcommand{\calj}{\mathcal{J}}
\newcommand{\caln}{\mathcal{N}}
\newcommand{\calo}{\mathcal{O}}
\newcommand{\calx}{\mathcal{X}}
\newcommand{\ph}{\phantom}
\newcommand{\wh}{\widehat}
\newcommand{\wt}{\widetilde}
\newcommand{\eps}{\varepsilon}
\newcommand{\zero}{\mathbf{0}}

\usepackage{xcolor}

\newcommand{\smtxa}[2]{
{\mbox{\scriptsize
$\left[\!\! \begin{array}{#1} #2 \end{array} \!\! \right]$}}}

\makeatletter
\let\c@table\c@figure
\makeatother
\renewcommand{\thetable}{\thefigure}

\begin{document}
\title[Deterministic and randomized Kaczmarz methods for $AXB=C$ with applications to color image restoration]
{Deterministic and randomized Kaczmarz methods for $AXB=C$ with applications to color image restoration}

\author[1]{\sur{Wenli Wang}}\email{wenli.wang12@gmail.com}

\author[1]{\sur{Duo Liu}}\email{20118001@bjtu.edu.cn}

\author[1]{\sur{Gangrong Qu}}\email{grqu@bjtu.edu.cn}

\author*[2]{\sur{Michiel E.~Hochstenbach}}\email{m.e.hochstenbach@tue.nl}

\affil[1]{\orgdiv{School of Mathematics and Statistics}, \orgname{Beijing Jiaotong University}, \orgaddress{\city{Beijing}, \postcode{100044}, \country{China}}}

\affil*[2]{\orgdiv{Department of Mathematics and Computer Science}, \orgname{TU Eindhoven}, \orgaddress{\city{Eindhoven}, \postcode{5600 MB}, \country{The Netherlands}}}

\abstract{
We study Kaczmarz type methods to solve consistent linear matrix equations.
We first present a block Kaczmarz (BK) method that employs a deterministic cyclic row selection strategy.
Assuming that the associated coefficient matrix has full column or row rank, we derive matrix formulas for a cycle of this BK method.
Moreover, we propose a greedy randomized block Kaczmarz (GRBK) method and further extend it to a relaxed variant (RGRBK) and a deterministic counterpart (MWRBK). We establish the convergence properties of the proposed methods.
Numerical tests verify the theoretical findings, and we apply the proposed methods to color image restoration problems.}

\keywords{Deterministic and randomized Kaczmarz methods, consistent linear matrix equation, convergence, minimum norm solution, Kaczmarz for multiple right-hand sides, color image restoration}
\pacs[MSC Classification]{15A06, 15A24, 65F10}

\maketitle

{\small
\begin{center}
{\em Dedicated with love to Prof.~?ke Bj\"{o}rck, for his beautiful character, friendship, \\
and work on many topics, including some appearing in this paper.}
\end{center}}

\section{Introduction}\label{section1}

Given the matrices $A\in \R^{m\times p}$, $B\in \R^{q\times n}$, and $C\in \R^{m\times n}$, we are interested in solving $X\in \R^{p\times q}$ from the consistent matrix equation
\begin{align}\label{eq1.1}
AXB=C.
\end{align}
This equation has close connections to numerous important applications in engineering, such as image processing, stability analysis, control theory, regression analysis, computer vision, image processing, network analysis, and computer-aided geometric design; see, e.g., \cite{ref2,ref3,ref5,ref6,ref11}.
One particular example that we will study is a forward model for noise-free color image restoration, which can be written as
\begin{align}\label{eq1.3}
AXA_{\rm c}^T=C\,;
\end{align}
see, e.g.,~\cite{ref5}.
In \eqref{eq1.1}, $A$ may be square, overdetermined ($m>p$) or underdetermined ($m<p$); and similarly for $B$.
In the context of \eqref{eq1.3}, the coefficient matrix $A$ is typically a large square matrix, whereas $A_{\rm c}$ is of size $3\times 3$.
We will consider all these cases in experiments in Sections~\ref{section5} and \ref{section6}.

``Direct'' approaches for solving \eqref{eq1.1} have cubic complexity in the dimensions, and may not be suitable when the solution is not unique, or if there is noise in $C$.
Various iterative algorithms have been proposed for solving matrix equation~\eqref{eq1.1}~\cite{ref8,ref9,ref10,ref14}.
A common property of these methods is that they typically require storing and accessing the entire coefficient matrix at each iteration.
Instead, we study {\em row-action} methods, which access only one row of $A$ in each iteration.
Row-action and column-action techniques have gained significant attention as efficient iterative algorithms for solving large-scale linear systems~(e.g., \cite{ref17,ref34,ref29,su})
\begin{align}\label{eq1.4}
Ax=b.
\end{align}
These methods decompose the system into subproblems based on row (or sometimes column) partitioning, which may result in several advantages such as memory utilization~\cite{Fe}.
In addition, and perhaps even more importantly, it turns out that row-action methods can often yield high-quality solutions also in the presence of noise or incomplete data (see, e.g., \cite{ref22}).
For our problem \eqref{eq1.1}, because of the presence of the matrix $B$, row-actions may be even more beneficial, for instance since expensive matrix-matrix products are avoided.

The classical Kaczmarz method is a particular row-action method that has been extensively utilized to solve linear system~\eqref{eq1.4}~\cite{ref15}.
At the $k$th iteration, the standard approach is to select the row index $i_k$ in a cyclic manner.
Then the vector in the Kaczmarz method is updated according to
\begin{align}\label{eq1.2}
x^{k+1}=x^k+\alpha \ \tfrac{b_{i_k}-A_{i_k,:}\,x^k}{\|A_{i_k,:}\|^2}\,
A_{i_k,:}^T,
\quad i_k=(k\ \text{mod}\ m)+1.
\end{align}
Here, $A_{i,:}$ denotes the $i$th row of $A$, $b_i$ represents the $i$th component of $b$, $\|\cdot\|$ is the 2-norm, and $\alpha \in (0,2)$ is a relaxation parameter.
The update for $\alpha=1$ is derived by orthogonally projecting the current estimate vector $x^k$ onto the hyperplane $\{x~|~\langle A_{i_k,:}^T\,,\,x\rangle=b_{i_k} \}$.
In the field of medical imaging (Computed Tomography), the Kaczmarz method is also known as the Algebraic Reconstruction Technique~\cite{ref16}.

{\bf Related work.}
Numerous variants have been proposed for the Kaczmarz method.
Strohmer and Vershynin~\cite{ref18}, instead of following the traditional sequential manner, select indices at random with probabilities proportional to the row norms. This results in a randomized Kaczmarz (RK) method with an expected rate of convergence that is called exponential or linear, depending on the community.
Subsequently, Bai and Wu~\cite{ref19} propose a row selection strategy based on weighted residuals.
Specifically, they prioritize the rows with larger residuals, which may lead to faster convergence compared to the method of \cite{ref18}.
Their approach is referred to as the greedy randomized Kaczmarz (GRK) method.
Furthermore, the GRK method can be generalized by inserting a parameter $\theta \in [0,1]$ into the row selection strategy (see Section~\ref{section4.2} for more details), which results in a relaxed greedy randomized Kaczmarz (RGRK) method proposed in \cite{ref20}.
Du and Gao~\cite{ref25} consider the limit case $\theta=1$, which is a deterministic method which directly selects the row with the largest weighted residual. They provide a theoretical estimate of its convergence rate and refer to this approach, originally introduced by McCormick~\cite{Mc}, as the maximal weighted residual Kaczmarz (MWRK) method.
We will extend these GRK, RGRK, and MWRK methods to problem~\eqref{eq1.1}.

We now focus our attention on extensions of Kaczmarz type methods for solving \eqref{eq1.1}.
Wu, Liu, and Zuo \cite{ref11} extend the RGRK and MWRK methods presented in~\cite{ref20,ref25} to solve \eqref{eq1.1} and provide convergence results.
In their methods, the original matrix equation~\eqref{eq1.1} is reformulated into $mn$ subsystems $A_{i,:}XB_{:,j}=C_{i,j}$.
The resulting iteration in each step adds a rank-one matrix. Here, $B_{:,j}$ denotes the $j$th column of $B$ and $C_{i,j}$ is the corresponding element of $C$.
Niu and Zheng \cite{Niu} consider similar methods, in which matrix equation \eqref{eq1.1} is decomposed into a series of subblock problems of the form $A_{I,:} X B_{:,J} = C_{I,J}$, where $A_{I,:}$ and $B_{:,J}$ represent the row and column submatrices of $A$ and $B$ indexed by $I$ and $J$, respectively, and $C_{I,J}$ is the associated submatrix of $C$.
A bit further from our approaches, Du, Ruan, and Sun \cite{Du} consider updating $X$ one column per step.
Our methods, as the approach in \cite{ref11}, also work with rank-one updates, but an essential difference is that the methods in \cite{ref11} consider one element of $C$ and one column of $B$ in each step, while we exploit a row of $C$ and the entire $B$.

Closer to our methods, Xing, Bao, and Li (see \cite{ref12} and Algorithm~\ref{ME-RBK}) decompose \eqref{eq1.1} into $m$ subsystems or row equations $A_{i,:}XB=C_{i,:}$ and extend the RK method in~\cite{ref18} to solve this matrix equation.
To distinguish their method (which uses the entire $B$) from that in reference~\cite{ref11} (which works on columns of $B$), they name it the randomized block Kaczmarz (RBK) method.

\begin{quote}
We would like to emphasize that the use of the term ``block'' in the methods proposed in this paper refers to the presence of $B$, rather than the (also common) meaning of working with the pseudoinverse of a subblock of $A$ (cf.~\cite{Elf, DNRY25}).
However, in fact this is natural, since seen on the level of the linear system $(B^T \otimes A)\,\text{vec}(X) = \text{vec}(C)$ associated with \eqref{eq1.1}, we work with $B^T \otimes A_{i,:}$, which corresponds to a subset (or minibatch) of rows of the operator $B^T \otimes A$.
\end{quote}
For other generalizations of the Kaczmarz method applied to the (well-known but different from \eqref{eq1.1}) Sylvester matrix equation, we refer to \cite{ref30}.

{\bf Some connections with other methods.}
The stochastic gradient descent (SGD) method (see, e.g., \cite{Chen,Nee3}) is an iterative optimization algorithm of the form
\begin{align}\label{SGD}
x^{k+1}=x^k-\alpha \ |I_k|^{-1}\,\sum_{i\in I_k} \nabla f_i(x^k),
\end{align}
where $\alpha$ is a learning rate, $I_k$ is a random subset of data, $\sum_{i\in I_k}\nabla f_i(x^k)$ is the gradient calculated from $I_k$, and $|I_k|$ represents the number of elements of $I_k$.
This method has been widely employed in machine learning \cite{Bo}.
Needell, Srebro, and Ward~\cite{Nee3} note that the RK method presented in~\cite{ref18} can be viewed as the SGD method with a minibatch size of one ($|I_k|=1$).

The Landweber method is a classical gradient approach for solving linear system~\eqref{eq1.4} by minimizing the objective function $\tfrac12\,\|Ax-b\|^2$~\cite{Qu}. This leads to the iterative scheme
\[
x^{k+1}=x^k+\alpha\,A^T\,(b-A\,x^k),
\]
where $A^T\,(b-A\,x^k)$ is the negative gradient of the objective function, and the stepsize $\alpha$ satisfies $0<\alpha<2 \, \|A\|^{-2}$.
The gradient-based iterative (GI) algorithm for solving matrix equation \eqref{eq1.1}, as studied in \cite{ref8}, is given by
\[
X^{k+1}=X^k+\alpha\,A^T \, (C-AX^kB) \, B^T,
\]
where $A^T \, (C-AX^kB) \, B^T$ is the negative gradient of the objective function $\tfrac12 \, \|AXB-C\|_F^2$, and the stepsize satisfies $0<\alpha< 2 \, \|A\|^{-2} \, \|B\|^{-2}$.
The Landweber and GI methods use the entire matrix at each step.
All our methods may also be derived from a gradient of an objective function (see Table~\ref{problems}). Instead of using the entire $A$, our methods follow the iterative scheme with $|I_k|=1$ in \eqref{SGD}, employing deterministic or randomized row selection strategies.

{\bf Our contributions.}
This work extends and analyzes Kaczmarz type methods for solving matrix equation (\ref{eq1.1}), covering deterministic and randomized Kaczmarz approaches.
We first propose and study (deterministic) cyclic row selection variants of the randomized RBK method~\cite{ref12}, which we call block Kaczmarz (BK) methods.
We consider a general BK method (Algorithm~\ref{ME-BK}) and show that this method converges to the solution $A^+CB^++X^0-A^+AX^0BB^+$.
Next, we investigate the BK method under specific constraints on the coefficient matrix $B$: full column rank (Algorithm~\ref{ME-BK1}), and full row rank (Algorithm~\ref{alg.AX=C}), with $B$ nonsingular as a special case.
These extra constraints enable the derivation of the concise and elegant matrix formula for a complete cycle of the BK method.
(The terms {\em cycle} and {\em sweep} are used interchangeably in the BK method to refer to a complete sequential traversal of all rows.)
An overview of the BK methods with different assumptions on $B$ is provided in Table~\ref{problems}.
In the case where the coefficient matrix $B$ has full column rank, a QR decomposition $B = QR$ is used, and the gradient is derived from an associated least-squares function; see Section~\ref{section3} for more details.

\begin{table}[htb!]
\caption{Overview of linear matrix equations and corresponding BK method formulations under various assumptions on $B$.}\label{problems}%
\setlength{\tabcolsep}{3mm}{
\begin{tabular}{lllll}
\hline\rule{0pt}{2.3ex}%
 & General case & $B$ full column rank & $B$ full row rank \\ \hline\rule{0pt}{3ex}%
Matrix equation & $AXB = C$ & $AXQ = \wh C$ & $AX = \wt C$ \\[1mm]
Right-hand side&$C$ & $\wh C = CR^{-1}$ & $\wt C = CB^T\,(BB^T)^{-1}$  \\[1mm]
Gradient form & $A^T (C-AXB)\, B^T$ & $A^T (\wh C-AXQ)\, Q^T$ & $A^T (\wt C-AX)$ \\[1mm]
Kaczmarz step & $A_i^T (C_i-A_iXB)\, B^T$ & $A_i^T (\wh C_i-A_iXQ)\, Q^T$ & $A_i^T (\wt C_i-A_iX)$ \\[1mm]
Sweep formula & -- & $\checkmark$ \ \eqref{eq3.18} & $\checkmark$ \ \eqref{eq0} \\[1mm]
Reference & Sec.~\ref{section3.1} (Alg.~\ref{ME-BK}) & Sec.~\ref{section3.2} (Alg.~\ref{ME-BK1}) & Sec.~\ref{section3.21} (Alg.~\ref{alg.AX=C})\\
\hline
\end{tabular}}
\end{table}

Next, we take the greedy randomized Kaczmarz method \cite{ref19} as a starting point.
This method is designed for linear systems \eqref{eq1.4} and randomly selects a row from those with large weighted residuals.
We extend this method to a greedy randomized block Kaczmarz (GRBK, Algorithm~\ref{ME-GRBK}) method to solve \eqref{eq1.1}.
Furthermore, we propose and study a relaxed version (RGRBK, Algorithm~\ref{ME-RGRBK}) and a maximal weighted residual block Kaczmarz (MWRBK, Algorithm~\ref{ME-MWRBK}) method, which can be regarded as relaxed and deterministic variants of GRBK.
All these approaches are extensions of methods for linear systems \eqref{eq1.4}.
RGRBK includes a tunable parameter $\theta\in[0,1]$. For two special cases of $\theta$, we will see that RGRBK coincides with GRBK when $\theta=\tfrac12$ and with MWRBK when $\theta=1$; see also Table~\ref{index} and Fig.~\ref{deltafig}.
We prove that the GRBK, RGRBK, and MWRBK methods converge in expectation to the unique minimum norm solution $A^+CB^+$ for a zero initial matrix. We also show in Section~\ref{section4} that the upper bounds on the convergence factors of these three methods do not exceed that of the RBK method in \cite{ref12}.

An overview of some methods for solving \eqref{eq1.1} and \eqref{eq1.4}, relevant to this paper are shown in Table~\ref{methods}.
We note that all schemes for \eqref{eq1.1} add rank-one updates in each iteration, with the exception of the GI method.

\begin{table}[h]
\caption{Overview of some relevant methods for solving $Ax=b$ and $AXB=C$.}
\label{methods}%
\setlength{\tabcolsep}{1mm}{
\begin{tabular}{lllll}
\hline\rule{0pt}{2.3ex}%
Row-action & $Ax=b$ & $AXB=C$ & Remark for $AXB=C$ \\
\hline\rule{0pt}{2.8ex}%
Deterministic & Kaczmarz~\cite{ref15}& BK (Alg.~\ref{ME-BK}) & Cyclic row actions \\[0.5mm]
& & BK (Alg.~\ref{ME-BK1}) & $B$ full column rank \\[0.5mm]
& & BK (Alg.~\ref{alg.AX=C}) &  $B$ full row rank \\[0.5mm]
& MWRK~\cite{ref25,Mc}& MWRK~\cite{ref11}, MWRBK (Alg.~\ref{ME-MWRBK}) & ``Max row'', RGRBK for $\theta=1$ \\[2.5mm]
Randomized& RK~\cite{ref18}& RBK (Alg.~\ref{ME-RBK})~\cite{ref12} &  \\[0.5mm]
& GRK~\cite{ref19}& GRBK (Alg.~\ref{ME-GRBK}) & RGRBK for $\theta=\tfrac12$ \\[0.5mm]
& RGRK~\cite{ref20}& RGRK~\cite{ref11}, RGRBK (Alg.~\ref{ME-RGRBK}) &  Involves $\theta \in [0,1]$ \\[2.5mm]
All rows & Landweber~\cite{Qu} & GI~\cite{ref8} & Gradient \\
\hline
    \end{tabular}}
\end{table}

The rest of the paper has been organized as follows. Section~\ref{section2} gives some necessary notations and preliminaries. Section~\ref{section3} shows the BK methods and their convergence analysis. Section~\ref{section4} investigates the GRBK method and its relaxed and deterministic variants, and analyzes their convergence properties. Section~\ref{section5} reports some numerical examples to illustrate the theoretical findings, giving an impression on the performance of the various methods. Section~\ref{section6} treats a special application of the presented methods in color image restoration, followed by some conclusions in Section~\ref{section7}.

\section{Notations and preliminaries}\label{section2}

For a matrix $A$, we use $A^+$, $A^T$, $r(A)$, $\text{range}(A)$, $\|A\|_F$, $A_{i,:}^T$, $\rho(A)$, $\sigma_{\min}(A)$, and $\text{tr}(A)$
to represent the Moore--Penrose pseudoinverse, transpose, rank, range space, Frobenius norm, transpose of the $i$th row, the spectral radius, the smallest singular value, and the trace of $A$, respectively.
The inner product and the Kronecker product of the matrices $A$ and $B$ are denoted as $\langle A,B \rangle$ and $A\otimes B$ respectively.
The stretching operator is defined by $\text{vec}(X)=(x_1^T,\,\ldots,\,x_n^T)^T\in \R^{mn}$ for matrix $X=(x_1,\,\ldots,\,x_n)\in \R^{m\times n}$.
We use ${\bf 0}$ to denote the zero matrix or vector.
For two positive integers $a$ and $b$, $(a\ \text{mod}\ b)$ denotes the remainder produced by dividing $a$ by $b$.
For a positive integer $m$, let $[m]=\{1,\,\ldots,\,m\}$.
The expected value conditional on the first $k$ iterations is denoted as $\E_k$, i.e., $\E_k[\cdot]=\E[\,\cdot\,|\,i_{0},\,i_1,\,\ldots,\,i_{k-1}]$,
where $i_{m}\,(m=0,1,\,\ldots,\,k-1)$ is the row index selected at the $m$th iteration.

We will first review some well-known properties of matrix equation \eqref{eq1.1}; cf., e.g., \cite{ref32,Du}.
Matrix equation~\eqref{eq1.1} is consistent if there exists at least one solution $X\in \R^{p\times q}$ such that the equation holds; in this case, the general solution is
\begin{align} \label{XM}
X_{\text{M}}=A^+CB^++U-A^+AUBB^+,
\end{align}
where $U\in\R^{p\times q}$ is arbitrary.
Define
\begin{align}\label{eq2.1}
X_{\ast}:=A^+CB^+\,;
\end{align}
this is the unique minimum norm solution, i.e., $X_{\ast}=\mathop{\arg\min}\limits_{AXB=C}\|X\|_F$.
This holds because one can verify that $\langle X_{\ast},\, U-A^+AUBB^+\rangle=0$ and therefore $\|X_{\text{M}}\|_F^2
 =\|X_{\ast}\|_F^2+\|U-A^+AUBB^+\|_F^2
\ge\|X_{\ast}\|_F^2$.
Note that $A^+ = (A^T\!A)^{-1}A^T$ when $A$ is of full column rank, and $A^+ = A^T(AA^T)^{-1}$ when it is of full row rank, and similarly for $B$.
It is well known that matrix equation \eqref{eq1.1} can be ``vectorized'' to the form $(B^T \otimes A)\,\text{vec}(X)=\text{vec}(C)$.
A rank analysis of $B^T \otimes A$ for the consistent matrix equation (\ref{eq1.1}) shows that $A^+ C B^+$ is its unique solution only in the case that $A$ is of full column rank and $B$ is of full row rank, and is the unique minimum norm solution in all other cases.

Xing et al. propose a randomized row method for \eqref{eq1.1} in~\cite{ref12}, which they call the RBK method, as outlined in Algorithm~\ref{ME-RBK}.
This will be a starting point for several new variants that we develop in this paper.
There are no restrictions on $A$, except for the fact that zero rows should be omitted from matrix equation \eqref{eq1.1}.
We mention that all methods that we study in this paper work with rows of $A$, with the idea that the size of $A$ is at least as large as that of $B$. If this is not the case, we may consider the transposed equation $B^T X^T A^T = C^T$ instead.

\begin{algorithm}\small
\caption{~The RBK method~\cite{ref12}\label{ME-RBK}}
{\begin{algorithmic}[1]
\State {\bf Input}: initial guess $X^0$
\State {\bf Output:} approximate solution to matrix equation~\eqref{eq1.1}
\State {\bf for} $k=0$, $1$, $2$, $\dots$ until convergence {\bf do}
\State ~~~~Select $i_k\in\{1,\,\ldots,\,m\}$ with probability
$\mathbb{P}({\rm row=}\,i_k)
= \|A_{i_k,:}\|^2 \, / \, \|A\|_F^2$
\State ~~~~Update $X^{k+1}=X^k+\tfrac{\alpha}{\|A_{i_k,:}\|^2}\,A_{i_k,:}^T\,((
C_{i_k,:}-A_{i_k,:}\,X^kB)\,B^T)$
\State {\bf End for}
\end{algorithmic}}
\end{algorithm}

The iterative loop of Algorithm~\ref{ME-RBK} outlines the process of updating the approximate solution $X^k$ for \eqref{eq1.1}.
The iteration continues until the approximation achieves the desired accuracy, as discussed in Sections~\ref{section5} and~\ref{section6}.
The update operation in line~5 of Algorithm~\ref{ME-RBK} has a per-iteration complexity of $\calo(q(p+n))$, adding the product of the column vector $A_{i_k,:}^T$ and the row vector $(C_{i_k,:}-A_{i_k,:}\,X^kB)\,B^T$, i.e., a rank-one update.
Note that, due to the presence of the factor $\|A_{i_k,:}\|^2$ in the denominator in line~5, this method is invariant under scaling the rows of $A$ and $C$ (i.e., it remains the same when applied to $DAXB = DC$ for a nonsingular diagonal matrix $D$).
It has been proven in~\cite[Thm.~1]{ref12} that the RBK method converges in expectation to the minimum norm solution $X_{\ast}$ when \eqref{eq1.1} is consistent, as in the following result.

\begin{theorem}\label{theorem2.2} \rm \cite[Thm.~1]{ref12}
Let matrix equation~\eqref{eq1.1} with $A\in \R^{m\times p}$, $B\in \R^{q\times n}$ and $C\in \R^{m\times n}$ be consistent.
If $0<\alpha< 2 \, \|B\|^{-2}$,
then the sequence $\{X^k\}_{k=0}^{\infty}$, obtained by RBK from an initial matrix $X^0\in \R^{p\times q}$, in which $X_{:,j}^0\in \text{range}(A^T)$, $j=1,\,\ldots,\,q$ and $(X_{i,:}^0)^T\in \text{range}(B)$, $i=1,\,\ldots,\,p$, converges in expectation to the unique minimum norm solution $X_{\ast}$. Moreover, the solution error satisfies
\begin{align*}
\E\big[\|X^k-X_{\ast}\|_F^2\big]
\le \delta^k \cdot \|X^0-X_{\ast}\|_F^2,
\end{align*}
where
\begin{align} \label{delta}
\delta = \delta(\alpha) = 1-\tfrac{2\alpha-\alpha^2\,\|B\|^2}
{\|A\|_F^2}\ \sigma_{\min}^2(A)\ \sigma_{\min}^2(B).
\end{align}
\end{theorem}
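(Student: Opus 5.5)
The plan is to compare one step of RBK with the fixed point $X_\ast$ and take conditional expectations. Write $E^k = X^k - X_\ast$ and $a_i = A_{i,:}^T$. Since the equation is consistent, $A X_\ast B = C$, so $C_{i,:} = a_i^T X_\ast B$. The update then gives
\[
E^{k+1} = E^k - \tfrac{\alpha}{\|a_{i_k}\|^2}\, a_{i_k} a_{i_k}^T E^k B B^T .
\]

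\textbf{Step 1: the subspace is invariant.} First we check that $E^k$ lies in
\[
\mathcal{S} = \{Z:\ Z = A^+ A Z B B^+\},
\]
the set of matrices whose columns lie in $\text{range}(A^T)$ and whose rows (transposed) lie in $\text{range}(B)$. This holds for $X^0$ by hypothesis. It holds for $X_\ast = A^+ C B^+$ because $\text{range}(A^+) = \text{range}(A^T)$ and $\text{range}((B^+)^T) = \text{range}(B)$. Each update $a_{i_k} w^T B^T$ has columns in $\text{range}(A^T)$ and rows in $\text{range}(B)$, so by induction $E^k \in \mathcal{S}$ for all $k$.

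\textbf{Step 2: one-step error identity.} Set $P_i = a_i a_i^T / \|a_i\|^2$. Then
\[
\|E^{k+1}\|_F^2 = \|E^k\|_F^2 - 2\alpha \langle P_{i_k} E^k, E^k B B^T\rangle + \alpha^2 \|P_{i_k} E^k B B^T\|_F^2 .
\]
The middle term equals $\|P_{i_k} E^k B\|_F^2$. For the last term, write $P E B B^T$ with $P$ a rank-one projector, so $\|P E B B^T\|_F \le \|P E B\|_F \|B\|$. Hence
\[
\|E^{k+1}\|_F^2 \le \|E^k\|_F^2 - (2\alpha - \alpha^2 \|B\|^2)\, \frac{|a_{i_k}^T E^k B|^2}{\|a_{i_k}\|^2} .
\]

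\textbf{Step 3: conditional expectation.} Take $\E_k$ with probabilities $\|a_i\|^2 / \|A\|_F^2$. The weights cancel the denominators:
\[
\E_k \|E^{k+1}\|_F^2 \le \|E^k\|_F^2 - \frac{2\alpha - \alpha^2 \|B\|^2}{\|A\|_F^2}\, \|A E^k B\|_F^2 .
\]
The factor $2\alpha - \alpha^2 \|B\|^2$ is positive exactly when $0 < \alpha < 2\|B\|^{-2}$.

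\textbf{Step 4: lower bound $\|A E B\|_F$ (main step).} We need, for $E \in \mathcal{S}$,
\[
\|A E B\|_F^2 \ge \sigma_{\min}^2(A)\, \sigma_{\min}^2(B)\, \|E\|_F^2 ,
\]
where $\sigma_{\min}$ means the smallest nonzero singular value. Split it into two applications of a column-space bound.
\begin{itemize}
\item Since each column of $E$ lies in $\text{range}(A^T)$, we get $\|A E\|_F \ge \sigma_{\min}(A)\,\|E\|_F$.
\item The rows of $AE$ lie in $\text{range}(B^T)$. This follows from $A E = A A^+ A E B B^+ = A E B B^+$ and $\text{range}((B B^+)^T) = \text{range}(B)$. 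Applying the same column bound to $(AE)^T$ against $B^T$ gives $\|A E B\|_F = \|B^T (A E)^T\|_F \ge \sigma_{\min}(B)\, \|A E\|_F$.
\end{itemize}
Combining the two bullets gives the inequality. This is where the range conditions on $X^0$ are essential.

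\textbf{Step 5: conclude.} Substituting Step 4 into Step 3 gives $\E_k \|E^{k+1}\|_F^2 \le \delta \|E^k\|_F^2$. Taking full expectation and inducting on $k$ yields $\E\|X^k - X_\ast\|_F^2 \le \delta^k \|X^0 - X_\ast\|_F^2$.

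It remains to check $0 \le \delta < 1$, so that the bound gives convergence in expectation. The factor $2\alpha - \alpha^2\|B\|^2$ is at most $\|B\|^{-2}$, and $\sigma_{\min}^2(A) \le \|A\|_F^2$, $\sigma_{\min}^2(B) \le \|B\|^2$. Hence the subtracted term lies in $(0,1]$, which gives $\delta \in [0,1)$.
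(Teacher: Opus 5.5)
Your proof is correct and follows essentially the route behind this result as assembled in the paper, which cites the theorem from \cite{ref12} rather than proving it. The ingredients are the same: the one-step inequality \eqref{eq3.7} from the proof of Theorem~\ref{theorem3.3}, averaged with the row-norm probabilities, and the lower bound \eqref{sigmamin} on the invariant subspace. The only difference is that you obtain that bound through two successive one-sided estimates instead of through the singular values of $B\otimes A^T$.

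One small slip: in Step~4 the transposed rows of $AE$ lie in $\text{range}(B)\subseteq\R^q$, not in $\text{range}(B^T)$. Your own justification via $(BB^+)^T$ shows $\text{range}(B)$, and that is what the bound for $B^T(AE)^T$ requires.
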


\noindent
Note that the common choice of $X^0 = \zero$ satisfies the two range conditions in the previous theorem. This zero matrix is a standard choice for Kaczmarz type methods, and we will also exploit it in Sections~\ref{section5} and~\ref{section6}.
In addition, we will compare the convergence factor $\delta$ in Theorem~\ref{theorem2.2} with those of the algorithms discussed in Section~\ref{section4}. A summary of their convergence factors is provided in Table~\ref{factors}.
With the common notation for the (standard and mixed) condition numbers $\kappa(A) = \|A\| / \sigma_{\min}(A)$ and $\kappa_{F,2}(A) = \|A\|_F / \sigma_{\min}(A)$, we have that $\delta$ is minimal for the choice $\alpha = \|B\|^{-2}$, in which case
\[
\delta = \delta(\|B\|^{-2}) = 1 - \kappa_{F,2}(A)^{-2} \, \kappa(B)^{-2}.
\]
Therefore, this is closer to 1 (and hence less favorable) than the factor $1 - \kappa_{F,2}(A)^{-2}$ for randomized Kaczmarz for linear systems \eqref{eq1.4} \cite{ref18}.

\section{Block Kaczmarz methods and their convergence}\label{section3}

Inspired by the RBK method (Algorithm~\ref{ME-RBK}), we now propose and study a deterministic cyclic variant for solving \eqref{eq1.1}, which we call the block Kaczmarz (BK) method (where we recall the meaning of the word ``block'' as explained in the introduction).
In Section~\ref{section3.1}, we consider a general BK method (Algorithm~\ref{ME-BK}), which imposes no restrictions on the coefficient matrices.
In Sections~\ref{section3.2} and \ref{section3.21}, we investigate the BK method under different constraints on the coefficient matrix $B$:
full column rank (Section~\ref{section3.2}, Algorithm~\ref{ME-BK1}), and full row rank (Section~\ref{section3.21}, Algorithm~\ref{alg.AX=C}).
This results in slightly simplified iteration schemes with matrix formulas for a full sweep of steps.

A word about notation: in the methods, we need to distinguish between a single row iteration, and a complete sweep, i.e., one full cycle through all $m$ rows meaning $m$ row-action steps.
The sweeps are important in the proof of Theorem~\ref{theorem3.3} and in Algorithms~\ref{ME-BK1} and \ref{alg.AX=C}.
All other methods and results use row actions.
With a little overload of notation, the following convention turns out to be convenient:

\begin{quote}
the notation $X^k$ is used to denote the result of single row operations, and $X^s$ (and $X^{s,i}$) for the iterates using sweeps.
\end{quote}

\subsection{The general BK method}\label{section3.1}

In Algorithm~\ref{ME-BK}, we give a pseudocode of the BK method suitable for any size of $B$.

\begin{algorithm}\small
\caption{~The general BK method\label{ME-BK}}
{\begin{algorithmic}[1]
\State {\bf Input}: initial guess $X^0$
\State {\bf Output:} approximate solution to matrix equation~\eqref{eq1.1}
\State {\bf for} $k=0$, $1$, $2$, $\dots$ until convergence {\bf do}
\State ~~~~Compute $i_k=(k\ \text{mod}\ m)+1$
\State ~~~~Compute $X^{k+1}=X^k+\tfrac{\alpha}{\|A_{i_k,:}\|^2}\,A_{i_k,:}^T\,((C_{i_k,:}-A_{i_k,:}\,X^kB)\,B^T)$
\State {\bf end for}
\end{algorithmic}}
\end{algorithm}

The only difference between Algorithms~\ref{ME-RBK} and \ref{ME-BK} is the row selection index $i_k$: Algorithm~\ref{ME-RBK} randomly selects an index based on probability $\|A_{i_k,:}\|^2 \, / \, \|A\|_F^2$, while Algorithm~\ref{ME-BK} sweeps all rows sequentially.
Algorithm~\ref{ME-BK} can also be interpreted as a {\em row-wise} gradient descent scheme with stepsize $\alpha$ for solving a scaled matrix equation
\begin{align}\label{DAXB}
DAXB = DC,
\end{align}
where $D$ is the diagonal matrix $\text{diag}(\|A_{1,:}\|^{-1},\, \dots,\, \|A_{m,:}\|^{-1})$.
For the $i$th row of \eqref{DAXB}, the corresponding least-squares problem is given by
\begin{align*}
\min_XF_i(X) := \min_X \tfrac12 \, \|A_{i,:}\|^{-2} \ \, \|A_{i,:}\,XB-C_{i,:}\|^2.
\end{align*}
It may be checked that the gradient with respect to $X$ is
\begin{align*}
\nabla F_i(X) = \|A_{i,:}\|^{-2} \ A_{i,:}^T \, (A_{i,:}\,XB-C_{i,:}) \, B^T.
\end{align*}
As Algorithm~\ref{ME-RBK}, the computational cost per iteration of Algorithm~\ref{ME-BK} is quadratic: $\calo(q(p+n))$.

Algorithm~\ref{ME-BK} can also be considered as a row version of the GI method of Ding, Liu, and Ding \cite{ref8}.
Unlike the method in \cite{ref8}, Algorithm~\ref{ME-BK} first decomposes matrix equation~\eqref{eq1.1} into $m$ subsystems of the form $A_{i,:}XB=C_{i,:}$ and updates $X^k$ at each iteration using only a single row of $A$ and the corresponding row of $C$.
The method of \cite{ref8} involves matrix-matrix products, and therefore one step is relatively expensive.
One can check that an update of the GI method requires $\min(\calo(mq(p+n)), \, \calo(pn(m+q)))$ work.
(This minimum arises because one can choose to first compute either $AX$ or $XB$.)
This method results in cubic computational complexity with respect to the sizes $m, n, p, q$. By contrast, Algorithm~\ref{ME-BK} has quadratic complexity and is therefore much cheaper per iteration.
The hope is that the $m$ steps of Algorithm~\ref{ME-BK} make more progress than one step of the GI method.
This is indeed confirmed by the experimental results presented in Section~\ref{section6}.

We now study the convergence of Algorithm~\ref{ME-BK}.
In \cite{ref17}, Jiang and Wang study the convergence of a row-action method for solving \eqref{eq1.4}.
We extend their \cite[Thm.~A.3]{ref17} to \eqref{eq1.1}, by proving that Algorithm \ref{ME-BK} converges to $X_{\ast}^0$, given by
\begin{align}\label{X}
X_{\ast}^0 := X_{\ast}+X^0-A^+A\,X^0\,BB^+,
\end{align}
where $X_{\ast}$ is as in~\eqref{eq2.1}.
The following lemma plays a role in the subsequent convergence analysis.

\begin{lemma}\label{lemma3.2}
\rm The iteration matrix $X^k$ obtained by Algorithm~\ref{ME-BK} from an arbitrary initial guess $X^0$ satisfies
\begin{align*}
X^k-A^+A\,X^kBB^+=X^0-A^+A\,X^0BB^+,\quad k=0,\,1,\,2,\,\ldots\,.
\end{align*}
\end{lemma}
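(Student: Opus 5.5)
The plan is to prove the identity by induction on $k$. The case $k=0$ is trivial, so it suffices to show that one step of Algorithm~\ref{ME-BK} leaves the quantity $X - A^+A\,X\,BB^+$ unchanged. Define the linear map
\[
\mathcal{P}(Y) := Y - A^+A\,Y\,BB^+ .
\]
Then the claim $\mathcal{P}(X^{k+1}) = \mathcal{P}(X^k)$ is equivalent to $\mathcal{P}(X^{k+1}-X^k)=\zero$.

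By line~5 of Algorithm~\ref{ME-BK}, the increment is
\[
X^{k+1}-X^k = \tfrac{\alpha}{\|A_{i_k,:}\|^2}\, A_{i_k,:}^T\, w^T B^T,
\qquad w^T := C_{i_k,:}-A_{i_k,:}X^kB .
\]
So it has the form $Y = A^T u\, v^T B^T$ with $u = e_{i_k}$ and $v = \tfrac{\alpha}{\|A_{i_k,:}\|^2} w$. The key step is to apply the Moore--Penrose identities $A^+A A^T = (A^+A)^T A^T = (AA^+A)^T = A^T$ and, similarly, $B^T BB^+ = B^T (BB^+)^T = (BB^+B)^T = B^T$. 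These give
\[
A^+A\,Y\,BB^+ = (A^+A A^T)\, u v^T\, (B^T BB^+) = A^T u v^T B^T = Y,
\]
so $\mathcal{P}(Y) = \zero$.

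Combining this with linearity of $\mathcal{P}$ gives $\mathcal{P}(X^{k+1}) = \mathcal{P}(X^k) + \mathcal{P}(X^{k+1}-X^k) = \mathcal{P}(X^k)$, and induction yields $\mathcal{P}(X^k) = \mathcal{P}(X^0)$ for all $k$. The argument uses no property of the cyclic index choice, so it holds for any row selection rule.

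The only step that needs care is checking that the symmetric projectors $A^+A$ and $BB^+$ act as the identity on the range of $A^T$ from the left and on the row space of $B^T$ from the right. Both follow directly from the Penrose conditions, so I do not expect a real obstacle.
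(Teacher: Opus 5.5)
Your proposal is correct and follows essentially the same argument as the paper. The paper writes $X^k$ as $X^0$ plus the sum of all rank-one increments and applies $A^+A(\cdot)BB^+$ to the whole sum at once, whereas you check the increments one step at a time by induction; the key identities $A^+A\,A_{i,:}^T=A_{i,:}^T$ and $B^TBB^+=B^T$ are the same in both.
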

\noindent {\bf Proof.}
From Algorithm~\ref{ME-BK}, we get
\begin{align}\label{eq3.2}
X^k=X^0+\sum_{j=0}^{k-1}
\tfrac{\alpha}{\|A_{i_j,:}\|^2}\,A_{i_j,:}^T\,(
C_{i_j,:}-A_{i_j,:}\,X^jB)\,B^T.
\end{align}
Subsequently, it is obtained that
\begin{align}
A^+A\,X^kBB^+& =A^+A\,\Big[X^0+\sum_{j=0}^{k-1}
\tfrac{\alpha}{\|A_{i_j,:}\|^2}\,A_{i_j,:}^T\,(
C_{i_j,:}-A_{i_j,:}\,X^jB)\,B^T\Big]BB^+ \nonumber \\
& =A^+A\, X^0BB^++\sum_{j=0}^{k-1}
\tfrac{\alpha}{\|A_{i_j,:}\|^2}\,A_{i_j,:}^T\,(
C_{i_j,:}-A_{i_j,:}\,X^jB)\,B^T
. \label{eq3.3}
\end{align}
Let $e_i$ be the $i$th column of the identity matrix.
The last step is valid because
$A^+A\,A_{i_s,:}^T
=(A^+A)^T\,A^T\,e_{i_s}
=A^T\,e_{i_s}
=A_{i_s,:}^T$
and
$B^TB\,B^+=B^T\,(BB^+)^T=B^T\,(B^T)^+\,B^T=B^T$.
By \eqref{eq3.2} and \eqref{eq3.3}, the conclusion is proved. ~~~~~~\fbox {}

\begin{theorem}\label{theorem3.3} \rm (Extension of \cite[Thm.~A.3]{ref17})
Let matrix equation \eqref{eq1.1} with $A\in \R^{m\times p}$, $B\in \R^{q\times n}$ and $C\in \R^{m\times n}$
be consistent.
If $0<\alpha<\tfrac{2}{\|B\|^2}$,
then the sequence $\{X^k\}_{k=0}^{\infty}$, obtained by Algorithm~\ref{ME-BK} from an arbitrary initial matrix $X^0\in \R^{p\times q}$, converges to the solution $X_{\ast}^0$.
\end{theorem}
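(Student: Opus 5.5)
The strategy is to split each iterate into the part fixed by Lemma~\ref{lemma3.2} and a ``projected'' part that lives in the right subspace, and then show that the error on that subspace contracts over each full sweep. Define the error $E^k := X^k - X_\ast^0$. Since $AX_\ast B = C$, we have $A(X^0 - A^+AX^0BB^+)B = AX^0B - AX^0B = \zero$, so $AX_\ast^0B = C$. This gives the error recursion
\[
E^{k+1} = E^k - \tfrac{\alpha}{\|A_{i_k,:}\|^2}\,A_{i_k,:}^T A_{i_k,:}\,E^k\,BB^T .
\]
Next I would identify the subspace on which $E^k$ lives. From Lemma~\ref{lemma3.2} and the identity $X_\ast - A^+AX_\ast BB^+ = \zero$, we get $E^k = A^+AE^kBB^+$ for all $k$. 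Hence every column of $E^k$ lies in $\text{range}(A^T)$ and every row of $E^k$ lies in $\text{range}(B)^T$. In vectorized form, with $e^k = \text{vec}(E^k)$, this reads $e^k \in \mathcal{S} := \text{range}(BB^+\otimes A^+A) = \text{range}(B\otimes A^T)$.

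I would then analyze one step as a linear operator. Writing $P_i := \|A_{i,:}\|^{-2}A_{i,:}^TA_{i,:}$ (an orthogonal projector), the step is $e^{k+1} = (I - \alpha\,BB^T\otimes P_{i_k})\,e^k =: T_{i_k}e^k$, and each $T_i$ is symmetric. For $0<\alpha<2/\|B\|^2$, the eigenvalues of $\alpha BB^T\otimes P_i$ lie in $[0,2)$, so $\|T_i\|\le 1$. Moreover, $\|T_iv\|=\|v\|$ holds only when $(BB^T\otimes P_i)v = \zero$. Now let $M := T_m\cdots T_1$ be the sweep operator. It maps $\mathcal{S}$ into itself, because each update adds $\text{vec}(A_{i,:}^T(\cdot)B^T)\in\mathcal{S}$.

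The key claim is that $\|M|_{\mathcal{S}}\|<1$. By compactness of the unit sphere in the finite-dimensional space $\mathcal{S}$, it suffices to show $\|Mv\|<\|v\|$ for every nonzero $v\in\mathcal{S}$. Suppose instead that $\|Mv\|=\|v\|$. Then each factor preserves the norm, so inductively $(BB^T\otimes P_i)v=\zero$ and $T_iv=v$ for $i=1,\dots,m$. In matrix form, with $v=\text{vec}(V)$, this says $P_iVBB^T = \zero$, i.e.\ $A_{i,:}VBB^T=\zero$ for all $i$, so $AVBB^T=\zero$. Using $B^TB^+{}^T$-type identities, this forces $AVB=\zero$: multiply on the right by $(B^T)^+$ and use $BB^T(B^T)^+ = B$. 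Combined with $V = A^+AVBB^+$, we get $V = A^+(AVB)B^+ = \zero$, a contradiction.

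It follows that $\|e^{sm}\|\le \|M|_{\mathcal{S}}\|^s\|e^0\|\to0$. The intermediate iterates satisfy $\|e^k\|\le\|e^{sm}\|$ for $sm \le k$ within the same sweep, since every $T_i$ is nonexpansive. Therefore $X^k\to X_\ast^0$.

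The main obstacle is the strict contraction on $\mathcal{S}$, specifically the argument that the norm is preserved only on the kernel. Symmetry plus the eigenvalue bound handles this: for a symmetric $T$ with spectrum in $(-1,1]$, $\|Tv\|=\|v\|$ forces $v$ to lie in the eigenspace of eigenvalue $1$. The remaining point is to keep the endpoint $-1$ excluded, which is exactly what the condition $\alpha<2/\|B\|^2$ guarantees.
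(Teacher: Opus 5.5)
Your argument is correct, but it proves the theorem by a genuinely different route from the paper.

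The paper works only with the per-step descent inequality \eqref{eq3.7}:
\begin{itemize}
\item it gets monotonicity of $\|X^k-X_\ast\|_F$ and deduces that the sweep residuals $C_{i,:}-A_{i,:}X^{s,i}B$ tend to zero;
\item it extracts a convergent subsequence of the bounded sweep iterates and shows that its limit $\widehat X$ solves \eqref{eq1.1};
\item it reapplies \eqref{eq3.7} with $\widehat X$ in place of $X_\ast$ to upgrade subsequential convergence to convergence of the whole sequence;
\item only at the end does it identify $\widehat X=X_\ast^0$ through Lemma~\ref{lemma3.2}.
\end{itemize}

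You instead measure the error against $X_\ast^0$ from the start and use Lemma~\ref{lemma3.2} to confine it to the invariant subspace $\text{range}(B)\otimes\text{range}(A^T)$. You then prove that the sweep operator $T_m\cdots T_1$ is a strict contraction there. Each ingredient checks out:
\begin{itemize}
\item the factors $T_i$ are symmetric with spectrum in $(-1,1]$;
\item norm preservation forces $v$ to be a common fixed point;
\item the kernel argument via $BB^T(B^T)^+=BB^+B=B$ and $V=A^+(AVB)B^+$ is sound;
\item compactness of the unit sphere in $\mathcal{S}$ turns the pointwise strict inequality into a uniform factor.
\end{itemize}

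Your route gives more. You obtain geometric convergence per sweep, with a factor $\|M|_{\mathcal{S}}\|<1$ that holds from the first sweep rather than only asymptotically. There is no subsequence or limit-identification step. Applied to $AXQ=\widehat C$, it also gives a direct proof of the restricted spectral-radius claim in Theorem~\ref{theorem3.7}, instead of inferring it from convergence via Lemma~\ref{lemma3.6}.

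The paper's route has its own advantages. It avoids vectorization and spectral analysis altogether. It rests on the same inequality \eqref{eq3.7} that drives the later greedy and maximal-residual results (Theorems~\ref{theorem4.2} and \ref{theorem4.10}), so the analysis is uniform across the paper. Its Fej\'er-type structure also does not depend on the update being linear.

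Two minor points. First, replace the vague reference to ``$B^TB^+{}^T$-type identities'' by the identity $BB^T(B^T)^+=B$ that you actually use. Second, state explicitly that all rows of $A$ are nonzero, as the paper assumes. Both the definition of $P_i$ and the passage from $P_iVBB^T=\zero$ to $A_{i,:}VBB^T=\zero$ need $A_{i,:}\neq\zero$.
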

\noindent {\bf Proof.}
By Algorithm~\ref{ME-BK}, one has
\begin{align}
\|X^{k+1}-X_{\ast}\|_F^2& =\|X^{k+1}-X^k+X^k-X_{\ast}\|_F^2 \nonumber \\
& =\|X^{k+1}-X^k\|_F^2
+2\,\langle X^{k+1}-X^k\,,\, X^k-X_{\ast}\rangle
+\|X^k-X_{\ast}\|_F^2 \nonumber \\
& =\tfrac{\alpha^2}{\|A_{i_k,:}\|^{4}}
\,\|A_{i_k,:}^T\,(
C_{i_k,:}-A_{i_k,:}\,X^kB) \, B^T\|_F^2 \nonumber \\
&  \ph{MMM} +2\,\tfrac{\alpha}{\|A_{i_k,:}\|^2}
\,\big\langle A_{i_k,:}^T\,(
C_{i_k,:}-A_{i_k,:}\,X^kB) \, B^T\, , \,X^k-X_{\ast}\big\rangle \nonumber \\
& \ph{MMM}
+\|X^k-X_{\ast}\|_F^2
. \label{eq3.4}
\end{align}
Note that (making use of the Frobenius norm of a rank-one matrix)
\begin{align*}
\tfrac{\alpha^2}{\|A_{i_k,:}\|^{4}}
\,\|A_{i_k,:}^T\,
(C_{i_k,:}-A_{i_k,:}\,X^kB) \, B^T\|_F^2
& =\tfrac{\alpha^2}{\|A_{i_k,:}\|^2}\,\|
(C_{i_k,:}-A_{i_k,:}\,X^kB) \, B^T\|^2 \\
& \le \tfrac{\alpha^2\,\|B\|^2}{\|A_{i_k,:}\|^2}
\,\|
C_{i_k,:}-A_{i_k,:}\,X^kB\|^2.
\end{align*}
Using $AX_{\ast}B=C$, one has
\begin{align*}
&\big\langle A_{i_k,:}^T\,(C_{i_k,:}-A_{i_k,:}\,X^kB) \, B^T\, ,\,
X^k-X_{\ast}\big\rangle \\
& =\text{tr}\,[(C_{i_k,:}-A_{i_k,:}\,X^kB)^TA_{i_k,:}\,(X^k-X_{\ast})B] \\
& =\text{tr}\,[(C_{i_k,:}-A_{i_k,:}\,X^kB)^T(A_{i_k,:}\,X^kB -C_{i_k,:})] = -\|
C_{i_k,:}-A_{i_k,:}\,X^kB\|^2.
\end{align*}
Substituting the results of the previous two math displays into (\ref{eq3.4}) yields
\begin{align}
&\|X^{k+1}-X_{\ast}\|_F^2-\|X^k-X_{\ast}\|_F^2 \nonumber \\
& \le \tfrac{\alpha^2\,\|B\|^2}{\|A_{i_k,:}\|^2}\,
\|C_{i_k,:}-A_{i_k,:}\,X^kB\|^2
-\tfrac{2\alpha}{\|A_{i_k,:}\|^2}\,
\|C_{i_k,:}-A_{i_k,:}\,X^kB\|^2 \nonumber \\
& =-\tfrac{2\alpha-\alpha^2\,\|B\|^2}{\|A_{i_k,:}\|^2} \
\|C_{i_k,:}-A_{i_k,:}\,X^kB\|^2
. \label{eq3.7}
\end{align}
If $0<\alpha<\tfrac{2}{\|B\|^2}$, then $\left\{\|X^k-X_{\ast}\|_F^2\right\}$ is a monotonically decreasing sequence and converges to a nonnegative limit $\beta$.
After $s+1$ cycles, that is, $k = (s+1)\,m$ steps, we have
\begin{align}\label{eq3.8}
\|X^{s+1}-X_{\ast}\|_F^2
\,\le \,\|X^{s}-X_{\ast}\|_F^2
-\sum_{i=1}^{m}\tfrac{2\alpha-\alpha^2\,\|B\|^2}{\|A_{i,:}\|^2}\,
\|C_{i,:}-A_{i,:}\,X^{s,\,i}\,B\|^2,
\end{align}
where $X^{s,\,i}$ denotes the $i$th iterate in the $s$th sweep.
Because of the constraint on $\alpha$, we obtain
\begin{align*}
0\,\le \,\sum_{i=1}^{m}\tfrac{2\alpha-\alpha^2\,\|B\|^2}{\|A_{i,:}\|^2}
\,\|C_{i,:}-A_{i,:}\,X^{s,\,i}\,B\|^2
\,\le \, \|X^{s}-X_{\ast}\|_F^2-
\|X^{s+1}-X_{\ast}\|_F^2
.
\end{align*}
Obviously, $\lim\limits_{s\rightarrow\infty}\left(\|X^{s}-X_{\ast}\|_F^2-
\|X^{s+1}-X_{\ast}\|_F^2\right)
=\beta-\beta=0$.
Therefore, we conclude that
\begin{align*}
\lim\limits_{s\rightarrow\infty}\sum_{i=1}^{m}\tfrac{2\alpha-\alpha^2\,\|B\|^2}{\|A_{i,:}\|^2}
\ \|C_{i,:}-A_{i,:}\,X^{s,\,i}\,B\|^2
= 0.
\end{align*}
This means that
\begin{align}\label{eq3.11}
\lim\limits_{s\rightarrow\infty} (C_{i,:}-A_{i,:}\,X^{s,\,i}B) = \zero, \quad i=1,\,\ldots,\,m.
\end{align}
The error sequence $\{\|X^{s}-X_{\ast}\|_F\}$ is bounded, and therefore $\{X^{s}\}$ is bounded.
Suppose that the subsequence $\{X^{s_{p}},\ p=1,\,2,\,\dots\}$ converges to a limit $\widehat{X}$.
From Algorithm~\ref{ME-BK} and (\ref{eq3.11}), we obtain
\begin{align*}
\lim\limits_{p\rightarrow\infty}
\big(X^{s_{p},\,i}-X^{s_{p},\,i-1}\big)
=\zero, \quad i=2,\,\ldots,\,m.
\end{align*}
This implies that $\widehat{X}$ is the limit of every subsequence $\{X^{s_{p},\,i}\}$ for $i=1,\,\ldots,\, m$.
From (\ref{eq3.11}), we get
\begin{align*}
C_{i,:}=\lim\limits_{p\rightarrow\infty}
A_{i,:}\,X^{s_{p}m,\,i}B = A_{i,:}\,\widehat{X}B, \quad i=1,\,\ldots,\,m,
\end{align*}
which implies that $\widehat{X}$ is a solution of \eqref{eq1.1}.

According to \eqref{eq3.7}, it is obvious that
$\|X^{k+1}-\widehat{X}\|_F\le \|X^k-\widehat{X}\|_F$.
Therefore,
$\{\|X^k-\widehat{X}\|_F\}$ is a monotonically decreasing and bounded sequence.
Hence, we conclude that $\lim\limits_{k\rightarrow\infty}\|X^k-\widehat{X}\|_F$ exists. Because $\lim\limits_{p\rightarrow\infty}\|X^{k_{p}m}-\widehat{X}\|_F=0$, it follows that $\lim\limits_{k\rightarrow\infty}\|X^k-\widehat{X}\|_F=0$, i.e., $\lim\limits_{k\rightarrow\infty}X^k=\widehat{X}$.

Define $\caln:=\{X\in \R^{p\times q}\ |\ AXB=\zero\}$.
Let $Z=\widehat{X}-A^+A\widehat{X}BB^+$.
Because $Z\in\caln$ and
\begin{align*} 
\langle \widehat{X}-Z\,,\,Z\rangle
& =\langle A^+A\widehat{X}BB^+\,,\,\widehat{X}-A^+A\widehat{X}BB^+\rangle\\
& =\langle A^+A\widehat{X}BB^+\,,\,\widehat{X}\rangle
-\langle A^+A\widehat{X}BB^+\,,\,A^+A\widehat{X}BB^+\rangle\\
& =\text{tr}(\widehat{X}^TA^+A\widehat{X}BB^+)
-\text{tr}((BB^+)^T\widehat{X}^T(A^+A)^TA^+A\widehat{X}BB^+)\\
& =\text{tr}(\widehat{X}^TA^+A\widehat{X}BB^+)
-\text{tr}
(\widehat{X}^TA^+A\widehat{X}BB^+)=0,
\end{align*}
we have $\|\widehat{X}\|_F^2=\|\widehat{X}-Z\|_F^2+\|Z\|_F^2\ge\|\widehat{X}-Z\|_F^2$.
Therefore, $\widehat{X}-Z$ is the minimum norm solution, i.e., $\widehat{X}-Z=X_{\ast}$.
According to Lemma \ref{lemma3.2}, we get
\begin{align*}
\widehat{X}-A^+A\widehat{X}BB^+
& =\lim\limits_{k\rightarrow\infty}X^k-A^+AX^kBB^+ \nonumber\\
& =\lim\limits_{k\rightarrow\infty}X^0-A^+AX^0BB^+
=X^0-A^+AX^0BB^+.
\end{align*}
It follows from this that $\widehat{X}=X_{\ast}+X^0-A^+AX^0BB^+=X_{\ast}^0$. ~~~~~~\fbox {}\\

Define
$g(\alpha)=2\alpha-\alpha^2\,\|B\|^2$.
Inequality~(\ref{eq3.7}) reveals that $g(\alpha)$ is maximized over $0< \alpha < 2 \, \|B\|^{-2}$ when $\alpha = \|B\|^{-2}$, which implies the upper bound of the convergence factor is minimized.
In experiments in Sections~\ref{section5} and~\ref{section6}, we therefore adopt this parameter $\alpha=\|B\|^{-2}$.
Note that the optimal $\alpha$ may be slightly differ from the value minimizing the upper bound (see also Figure~\ref{BK_fullcolumnrow}).

We have investigated the BK method in a general setting (with no restrictions on $A$ or $B$) and now turn our attention to the effects of the cases when $B$ is of full column or row rank.

\subsection{The BK method with $B$ of full column rank}\label{section3.2}

If $B$ has full column rank (which implies $q\ge n$), we can express $B$ as $B=QR$, where
$Q\in \R^{q\times n}$ has orthonormal columns and $R\in \R^{n\times n}$ is an upper triangular matrix. This step requires an investment of $\calo(qn^2)$ work; note that $B$ may be of small size in our applications. Then \eqref{eq1.1} becomes
\begin{align}\label{eq3.17}
AXQ=\widehat{C},
\end{align}
where $\widehat{C}=CR^{-1}$.
Some notes on this initial transformation are in order.
It is usually only a sensible idea when $B$ is not of large size, and not nearly rank-deficient, especially in the presence of noise in $C$ (which we do not consider in this paper).
In the same line, we can also consider $AX = CB^+ \ (= C\,(B^T\!B)^{-1}B^T)$, in which case $Q = I$ in (\ref{eq3.17}).
However, for the theoretical results in this section, an orthogonal $Q$ is sufficient; a transformation with $B^+$ is not needed.

The update formula for the BK method corresponding to problem \eqref{eq3.17} is given by
\begin{align}\label{X_AXQ=C}
X^{k+1}=X^k+
\tfrac{\alpha}{\|A_{i_k,:}\|^2}\,A_{i_k,:}^T\,(
\widehat{C}_{i_k,:}-A_{i_k,:}\,X^k\,Q
)\,Q^T.
\end{align}
A pseudocode for a complete cycle of the iterative scheme \eqref{X_AXQ=C} is given in Algorithm~\ref{ME-BK1}.
As stated before, we slightly reformulate the algorithm in terms of sweeps (using $s$ as notation) to derive a formula in Proposition~\ref{proposition3.5}.
As with Algorithm~\ref{ME-BK}, the rank-one update in line~5 requires $\calo(q(p+n))$ operations per iteration.

\begin{algorithm}\small
\caption{~The BK method with $B$ of full column rank\label{ME-BK1}}
{\begin{algorithmic}[1]
\State {\bf Input}: initial guess $X^0$
\State {\bf Output:} approximate solution to matrix equation~\eqref{eq3.17}
\State {\bf for} $s=0$, $1$, $2$, $\dots$ until convergence {\bf do}
\State ~~~~Set
 $X^{s,\,0}=X^s$
\State ~~~~Compute $X^{s,\,i}=X^{s,\,i-1}+
\tfrac{\alpha}{\|A_{i,:}\|^2}\,A_{i,:}^T\,((
\widehat{C}_{i,:}-A_{i,:}\,X^{s,\,i-1}\,Q
)\,Q^T),\quad i=1,\,\ldots,\,m$
\State ~~~~Update $X^{s+1}=X^{s,\,m}$
\State {\bf end for}
\end{algorithmic}}
\end{algorithm}

The orthogonality of the matrix $Q$ enables the derivation of a matrix expression for one sweep of Algorithm~\ref{ME-BK1}. Generalizing \cite[Prop.~4]{ref34} to problem~\eqref{eq3.17}, we obtain the following formula for a full sweep.

\begin{proposition}\label{proposition3.5}
\rm One cycle of Algorithm~\ref{ME-BK1} can be written as
\begin{align}\label{eq3.18}
X^{s+1}=X^s+A^TL_{\alpha}^{-1}\,(\widehat{C}-AX^s\,Q
)\,Q^T,\quad L_{\alpha} = \text{slt}(AA^T) + \alpha^{-1} \, \text{diag}(AA^T),
\end{align}
where $\text{slt}(AA^T)$ and $\text{diag}(AA^T)$ denote the strictly lower triangular part and the diagonal part of the square matrix $AA^T$, respectively.
\end{proposition}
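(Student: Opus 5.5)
The plan is to track how each row step changes the residual rows. Within sweep $s$, define the auxiliary row vectors
\[
y_i := \tfrac{\alpha}{\|A_{i,:}\|^2}\,\big(\wh C_{i,:} - A_{i,:}X^{s,i-1}Q\big) \in \R^{1\times n},\qquad i=1,\ldots,m,
\]
and collect them as the rows of $Y\in\R^{m\times n}$. Line~5 of Algorithm~\ref{ME-BK1} gives $X^{s,i}=X^{s,i-1}+A_{i,:}^T\,y_i\,Q^T$. Telescoping over $i=1,\ldots,m$ then yields
\[
X^{s+1}=X^s+\sum_{i=1}^m A_{i,:}^T y_i Q^T = X^s + A^T Y Q^T .
\]
It therefore suffices to show that $Y = L_\alpha^{-1}(\wh C - AX^sQ)$, i.e.\ that $L_\alpha Y = \wh C - AX^sQ$.

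The second step is to express $X^{s,i-1}$ in terms of $X^s$. We have $X^{s,i-1}=X^s+\sum_{j<i}A_{j,:}^T y_j Q^T$. Multiplying on the left by $A_{i,:}$ and on the right by $Q$, and using $Q^TQ=I_n$ (orthonormal columns, the essential point), gives
\[
A_{i,:}X^{s,i-1}Q = A_{i,:}X^sQ + \sum_{j<i}(A_{i,:}A_{j,:}^T)\,y_j = A_{i,:}X^sQ + \sum_{j<i}(AA^T)_{ij}\,y_j .
\]
Substituting this into the definition of $y_i$ and multiplying through by $\alpha^{-1}\|A_{i,:}\|^2=\alpha^{-1}(AA^T)_{ii}$ gives
\[
\alpha^{-1}(AA^T)_{ii}\,y_i + \sum_{j<i}(AA^T)_{ij}\,y_j = \wh C_{i,:}-A_{i,:}X^sQ ,
\]
which is exactly row $i$ of $\big(\alpha^{-1}\text{diag}(AA^T)+\text{slt}(AA^T)\big)Y=\wh C-AX^sQ$.

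Finally, $L_\alpha$ is lower triangular with diagonal entries $\alpha^{-1}\|A_{i,:}\|^2>0$, since zero rows of $A$ are excluded. Hence $L_\alpha$ is invertible, and the formula \eqref{eq3.18} follows. The only real obstacle is the bookkeeping in the second step: one must verify that $Q^TQ=I$ collapses the product $A_{i,:}A_{j,:}^T y_j Q^TQ$ to a scalar multiple of $y_j$. This is exactly where full column rank (via the QR factorization) is used, and it is why the general Algorithm~\ref{ME-BK}, with $B^TB\neq I$, admits no such sweep formula.
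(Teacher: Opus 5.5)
Your proof is correct and is essentially the paper's argument. The paper inducts on $i$, assuming $X^{s,i-1}=X^{s,0}+(A_{1:(i-1),:})^T((L_\alpha)_{i-1})^{-1}[(\wh C-AX^{s,0}Q)Q^T]_{1:(i-1),:}$ and using $Q^TQ=I$ with the inverse of a $2\times 2$ block lower-triangular matrix to pass from $(L_\alpha)_{i-1}$ to $(L_\alpha)_i$; that is exactly the forward-substitution step you do when you read off row $i$ of $L_\alpha Y=\wh C-AX^sQ$, and collecting the step coefficients into $Y$ simply avoids the explicit block-inverse bookkeeping.
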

\noindent {\bf Proof.} Let $X_{i}$ represent the $i\times i$ principal submatrix of the matrix $X$.
Let $X_{1:i,\,:}$ denote the first $i$ rows of the matrix $X$. Suppose that Algorithm~\ref{ME-BK1} can be expressed as
\begin{align*}
X^{s,\,i-1}=X^{s,\,0}+(A_{1:(i-1),\,:})^T\,((L_{\alpha})_{i-1})^{-1}\,\big[(
\widehat{C}-AX^{s,\,0}\,Q
)\,Q^T\big]_{1:(i-1),\,:}.
\end{align*}
Then we have
\begin{align*}
X^{s,\,i}& =X^{s,\,i-1}+
\tfrac{\alpha}{\|A_{i,:}\|^2}\,A_{i,:}^T\,(
\widehat{C}_{i,:}-A_{i,:}\,X^{s,\,i-1}\,Q
)\,Q^T\\
& =X^{s,\,0}+(A_{1:(i-1),\,:})^T\,((L_{\alpha})_{i-1})^{-1}\,\big[(
\widehat{C}-AX^{s,\,0}\,Q
)\,Q^T\big]_{1:(i-1),\,:}\\
&\phantom{M}+\tfrac{\alpha}{\|A_{i,:}\|^2}\,A_{i,:}^T
\big(
\widehat{C}_{i,:}-A_{i,:}\,
\big(X^{s,\,0}+(A_{1:(i-1),\,:})^T\,((L_{\alpha})_{i-1})^{-1}\\
&\phantom{MM}\big[(
\widehat{C}-AX^{s,\,0}\,Q
)\,Q^T\big]_{1:(i-1),\,:}\big)
\,Q\big)\,Q^T\\
& =X^{s,\,0}+(A_{1:(i-1),\,:})^T\,((L_{\alpha})_{i-1})^{-1}\,\big[(
\widehat{C}-AX^{s,\,0}\,Q
)\,Q^T\big]_{1:(i-1),\,:}\\
& \phantom{M}
+\tfrac{\alpha}{\|A_{i,:}\|^2}\,A_{i,:}^T\,(
\widehat{C}_{i,:}-A_{i,:}\,X^{s,\,0}
\,Q
)\,Q^T\\
& \phantom{M}-\tfrac{\alpha}{\|A_{i,:}\|^2}\,A_{i,:}^T\,
A_{i,:}\,
(A_{1:(i-1),\,:})^T\,((L_{\alpha})_{i-1})^{-1}\,\big[(
\widehat{C}-AX^{s,\,0}\,Q
)\big]_{1:(i-1),\,:}
\,Q^T\,Q\,Q^T\\
& =X^{s,\,0}+\left[(A_{1:(i-1),\,:})^T~~ A_{i,:}^T\right]
\smtxa{cc}{
((L_{\alpha})_{i-1})^{-1}& \bf{0}\\[2mm]
-\tfrac{\alpha}{\|A_{i,:}\|^2}
\,A_{i,:}\,(A_{1:(i-1),\,:})^T\,((L_{\alpha})_{i-1})^{-1}& \tfrac{\alpha}{\|A_{i,:}\|^2}} \\
& \ \hspace{70mm} \cdot
\smtxa{c}{\big[(
\widehat{C}-AX^{s,\,0}\,Q
)\,Q^T\big]_{1:(i-1),\,:}\\[2mm]
(\widehat{C}_{i,:}-A_{i,:}\,X^{s,\,0}\,Q
)\,Q^T} \\
& =X^{s,\,0}+\left[(A_{1:(i-1),\,:})^T~~ A_{i,:}^T\right]
\smtxa{cc}{
(L_{\alpha})_{i-1}& \bf{0}\\[2mm]
A_{i,:}\,(A_{1:(i-1),\,:})^T& \alpha^{-1}\|A_{i,:}\|^2}\\
& \ \hspace{70mm} \cdot
\smtxa{cc}{\big[(
\widehat{C}-AX^{s,\,0}\,Q
)\,Q^T\big]_{1:(i-1),\,:}\\[2mm]
(\widehat{C}_{i,:}-A_{i,:}\,X^{s,\,0}\,Q
)\,Q^T} \\
& =X^{s,\,0}+(A_{1:i,\,:})^T\,((L_{\alpha})_{i})^{-1}
\big[(
\widehat{C}-AX^{s,\,0}\,Q
)\,Q^T\big]_{1:i,\,:}.
\end{align*}
Therefore, (\ref{eq3.18}) is obtained by mathematical induction. ~~~~~~\fbox {}\\

\noindent
We recall the following general result regarding a linear iterative process.

\begin{lemma}\label{lemma3.6} \rm (See, e.g., \cite[Thm.~4.2.1]{ref35})
For the iterative scheme $x^{s+1}=M\,x^s+g$, where $M\in \R^{n\times n}$ and $x^s\in \R^n$,
a necessary and sufficient condition for convergence for any initial guess $x^0$ is that the spectral radius of $M$ satisfies $\rho(M)<1$.
\end{lemma}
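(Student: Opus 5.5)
The statement is the classical convergence theorem for stationary linear iterations, so I would prove both directions by working with the error vector. Assume the iteration converges to some limit. Passing to the limit in $x^{s+1}=Mx^s+g$ shows the limit $x_\ast$ satisfies $x_\ast=Mx_\ast+g$. Hence the error $e^s:=x^s-x_\ast$ obeys $e^{s+1}=Me^s$, so that $e^s=M^s e^0$. For the sufficiency part I first need a fixed point to exist. If $\rho(M)<1$, then $I-M$ is nonsingular, since $1$ is not an eigenvalue of $M$. So $x_\ast:=(I-M)^{-1}g$ is well defined and satisfies $x_\ast=Mx_\ast+g$.

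For sufficiency, assume $\rho(M)<1$. I would show $M^s\to\zero$ using the Jordan canonical form $M=SJS^{-1}$. The powers of a Jordan block $J_\lambda$ of size $r$ have entries $\binom{s}{j}\lambda^{s-j}$ for $0\le j<r$. Each such entry tends to $0$ when $|\lambda|<1$, because polynomial growth in $s$ is dominated by the geometric decay. Alternatively, one can choose an induced norm with $\|M\|<1$ (for any $\varepsilon>0$ there is a norm with $\|M\|\le\rho(M)+\varepsilon$) and conclude directly. Either way $e^s=M^se^0\to\zero$ for every $x^0$, so $x^s\to x_\ast$.

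For necessity, suppose the iteration converges for every initial guess $x^0$, and assume for contradiction that $M$ has an eigenvalue $\lambda$ with $|\lambda|\ge1$ and eigenvector $v$. Working over $\mathbb{C}$ causes no loss, since real and imaginary parts of complex initial data can be handled separately by linearity. Fix one convergent run from some $x^0$ with limit $x_\ast$; as noted above, $x_\ast$ is a fixed point. Now start a second run from $x^0=x_\ast+v$. Its error is $e^s=\lambda^s v$, whose norm $|\lambda|^s\|v\|$ does not tend to $0$. If $|\lambda|>1$ this already contradicts convergence. If $|\lambda|=1$ with $\lambda\neq1$, the sequence $\lambda^s v$ oscillates and does not converge.

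The main obstacle is the borderline case $\lambda=1$. There $x_\ast+v$ is itself a fixed point, so the iteration still converges, just to a different limit. I would resolve this by reading ``convergence'' as convergence to a limit that is independent of $x^0$, which is the sense needed later for Algorithm~\ref{ME-BK1}. Under that reading the two fixed points $x_\ast$ and $x_\ast+v$ give a contradiction. For the real case I would use that the vector pair of real and imaginary parts of $v$ spans an $M$-invariant subspace on which the same argument applies.
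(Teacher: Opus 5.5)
Your proof is correct and is the standard textbook argument behind the result, which the paper only cites and does not prove: sufficiency via $M^s\to\zero$ (using the Jordan form or an adapted norm), and necessity via an eigenvector $v$ of an eigenvalue with $|\lambda|\ge 1$. You are also right that the necessity direction fails as literally stated (e.g.\ $M=I$, $g=\zero$), so a strengthening such as your ``limit independent of $x^0$'' is genuinely needed; an alternative is convergence for every $g$, since then $x^0=\zero$ and $g=v$ with $Mv=v$ give $x^s=s\,v$. Note that the paper's BK iterations in general satisfy your reading only after restricting to $\text{range}(B)\otimes\text{range}(A^T)$, because Theorem~\ref{theorem3.3} gives a limit $X_\ast^0$ that depends on $X^0$.
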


\noindent
By applying Lemma~\ref{lemma3.6}, we have the following result.

\begin{theorem}\label{theorem3.7}
\rm Let matrix equation \eqref{eq1.1} with $B\in \R^{q\times n}$ have full column rank.
If $\alpha \in (0,2)$, then Algorithm~\ref{ME-BK1} is convergent and $\rho(I_{pq}-(QQ^T \otimes A^T\,L_{\alpha}^{-1}\,A)) < 1$, where the operator is a mapping from $\text{range}(B) \otimes \text{range}(A^T)$ on itself. In addition:
\vspace{-2mm}
\begin{itemize}
\item[(a)] If \eqref{eq1.1} is consistent, the process converges to a solution of this matrix equation.
\item[(b)] If $\text{range}(X^0)\subseteq \text{range}(A^T)$ and $\text{range}((X^0)^T)\subseteq \text{range}(B)$, then the method converges to the minimum norm solution $A^+\wh C\,Q^T$ \ ($= A^+ C B^+$).
\end{itemize}
\end{theorem}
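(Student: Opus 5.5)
Starting from the sweep formula of Proposition~\ref{proposition3.5}, we vectorize and then bound the spectral radius on the relevant invariant subspace. Write $x^s=\text{vec}(X^s)$. Since $\text{vec}(A^TL_\alpha^{-1}AX^sQQ^T)=(QQ^T\otimes A^TL_\alpha^{-1}A)\,x^s$, \eqref{eq3.18} becomes
\[
x^{s+1}=\big(I_{pq}-QQ^T\otimes A^TL_\alpha^{-1}A\big)x^s+\text{vec}\big(A^TL_\alpha^{-1}\wh C\,Q^T\big)=:Mx^s+g .
\]
The update term always lies in $\text{range}(B)\otimes\text{range}(A^T)$, because its columns lie in $\text{range}(A^T)$, its rows in $\text{range}(Q)=\text{range}(B)$, and $\text{range}(B)=\text{range}(Q)$ by full column rank. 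Hence $M$ maps $V:=\text{range}(Q)\otimes\text{range}(A^T)$ into itself and acts as the identity on $V^\perp$. We then study $M|_V$ and apply Lemma~\ref{lemma3.6} on $V$, with the component in $V^\perp$ frozen.

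For the spectral radius, $QQ^T$ is the orthogonal projector onto $\text{range}(Q)$, so on $V$ we have $M|_V=I\otimes(I-A^TL_\alpha^{-1}A)|_{\text{range}(A^T)}$. It therefore suffices to show $\rho(I-A^TL_\alpha^{-1}A)<1$ on $\text{range}(A^T)$; this is the classical Kaczmarz/SOR sweep result. Here $I-A^TL_\alpha^{-1}A$ is the cyclic product $\prod_i\big(I-\alpha\,a_ia_i^T/\|a_i\|^2\big)$ of relaxed projections. For $\alpha\in(0,2)$ each factor is a nonexpansive symmetric operator, and it is strictly contractive in the direction $a_i$.

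The main obstacle is strictness. Take $z\in\text{range}(A^T)$ and suppose $\|Pz\|=\|z\|$, where $P$ is the product. Then every factor must preserve the norm along the way, which forces $a_i^Tz=0$ successively for all $i$, so $Az=0$. Combined with $z\in\text{range}(A^T)$ this gives $z=0$. By compactness of the unit sphere in $\text{range}(A^T)$, the norm of $P$ restricted there is strictly below $1$, so $\rho<1$. Alternatively, one can use the SOR argument: with $AA^T=L+D+L^T$, the matrix $L_\alpha+L_\alpha^T-AA^T=(2/\alpha-1)D$ is positive definite, which gives the Stein-type contraction directly.

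For (a), because $\rho(M|_V)<1$, the iterates converge to a limit $X^\infty$. This limit satisfies $A^TL_\alpha^{-1}(\wh C-AX^\infty Q)Q^T=0$, where $\wh C-AX^\infty Q=A(X_\ast-X^\infty)Q$ by consistency. Since $A^TL_\alpha^{-1}A$ is invertible on $\text{range}(A^T)$, the projected error vanishes: multiplying through and using $Q^TQ=I$ gives $A(X_\ast-X^\infty)Q=0$, so $AX^\infty B=C$. For (b), the range conditions say $x^0\in V$, so every $x^s\in V$ and hence the limit lies in $V$. The only solution in $V$ is the minimum norm one: it equals $A^+\wh CQ^T$ by the characterization \eqref{XM} of all solutions, since the nullspace component $U-A^+AUBB^+$ is orthogonal to $V$. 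Finally, $A^+\wh CQ^T=A^+CR^{-1}Q^T=A^+CB^+$.
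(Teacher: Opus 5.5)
Your proof is correct, but it runs in the opposite direction from the paper's.

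The paper vectorizes \eqref{eq3.18} exactly as you do. It then takes convergence from Theorem~\ref{theorem3.3} applied to $AXQ=\wh C$; since $\|Q\|=1$, the step-size condition becomes $\alpha\in(0,2)$. It reads off the spectral radius bound from the necessity direction of Lemma~\ref{lemma3.6}, with a remark that this applies to the restriction to $\text{range}(B)\otimes\text{range}(A^T)$. Parts (a) and (b) are then inherited from the limit $X_\ast^0$ identified in Theorem~\ref{theorem3.3}.

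You instead prove the bound directly. You use strict contractivity of the product of relaxed projections on $\text{range}(A^T)$ plus compactness, or alternatively the positive definiteness of $L_\alpha+L_\alpha^T-AA^T=(2/\alpha-1)\,\text{diag}(AA^T)$. You then need only the sufficiency direction of Lemma~\ref{lemma3.6}. The limit is identified from three facts: the fixed-point equation, the invertibility of $A^TL_\alpha^{-1}A$ on $\text{range}(A^T)$, and the fact that the solution set \eqref{XM} is $X_\ast+V^\perp$ with $X_\ast\in V$, where $V=\text{range}(Q)\otimes\text{range}(A^T)$.

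The paper's route is shorter because it recycles Theorem~\ref{theorem3.3}. However, it leans on consistency (a hypothesis of that theorem) even for the unconditional convergence claim. Its necessity step is also delicate. The full operator $I_{pq}-QQ^T\otimes A^TL_\alpha^{-1}A$ has eigenvalue $1$ whenever $q>n$ or $A$ is column-rank deficient, yet the iteration still converges. So one must argue on $V$ and use that the limit is independent of the starting point there.

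Your route is self-contained and makes the invariant splitting into $V$ and $V^\perp$ explicit. The identity action on $V^\perp$, which you state without justification, holds because $QQ^T\otimes A^TL_\alpha^{-1}A=(QQ^T\otimes A^TL_\alpha^{-1}A)(QQ^T\otimes A^+A)$, and the right factor is the orthogonal projector onto $V$. Your route also gives the stronger statement that $M|_V$ has norm less than $1$, and it yields convergence of Algorithm~\ref{ME-BK1} for any $C$. The price is the classical identity $I-A^TL_\alpha^{-1}A=\prod_i(I-\alpha\,a_ia_i^T/\|a_i\|^2)$, which you can take from Proposition~\ref{proposition3.5} with $Q=I$ and $\wh C=\zero$.
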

\noindent {\bf Proof.}
By Proposition~\ref{proposition3.5}, we obtain
\begin{align*}
\text{vec}(X^{s+1})& =\text{vec}(X^s)+\text{vec}(A^T\,L_{\alpha}^{-1}\,(\widehat{C}-A\,X^s\,Q
)\,Q^T)\\
& =\text{vec}(X^s)-\text{vec}(A^T\,L_{\alpha}^{-1}\,A\,X^s\,Q\,Q^T)+\text{vec}(A^T\,L_{\alpha}^{-1}\,\widehat{C}\,Q^T)\\
& =\text{vec}(X^s)-(Q\,Q^T \otimes A^T\,L_{\alpha}^{-1}\,A)\,\text{vec}(X^s)+\text{vec}(A^T\,L_{\alpha}^{-1}\,\widehat{C}\,Q^T)\\
& =[I_{pq}-(Q\,Q^T \otimes A^T\,L_{\alpha}^{-1}\,A)]\,\text{vec}(X^s)+\text{vec}(A^T\,L_{\alpha}^{-1}\,\widehat{C}\,Q^T),
\end{align*}
According to Theorem~\ref{theorem3.3}, Algorithm~\ref{ME-BK1} converges if $0<\alpha<2$.
Since the method converges (Lemma~\ref{lemma3.6}) we get the assertion of the spectral radius.
Note that $I_{pq}-(QQ^T \otimes A^T\,L_{\alpha}^{-1}\,A)$ will have eigenvalues equal to one if $q > n$ or if $A$ is not of full column rank;
this results in a component in \eqref{XM} by which the solution differs from $A^+CB^+$.
However, it is the restriction of this operator to the space $\text{range}(B) \otimes \text{range}(A^T)$ that has spectral radius less than 1, and this is what is relevant for the convergence of the Kaczmarz method, considering that the updates are in $\text{range}(A^T)$ with transpose in $\text{range}(B)$. ~~~~~~\fbox {}\\

\subsection{The BK method with $B$ of full row rank}\label{section3.21}

If $B$ has full row rank, matrix equation \eqref{eq1.1} can be reformulated as
\begin{align}\label{AX=C}
AX=\wt C,
\end{align}
where $\wt C=CB^T\,(BB^T)^{-1}$.
This transformation may again be considered under the same circumstances as in Section~\ref{section3.2}.
Note that this is a linear system with multiple right-hand sides, and we perform a Kaczmarz type method involving rank-one updates to this problem.
Then the corresponding update formula for the BK method (cf.~line~5 of Algorithm~\ref{ME-BK}) is
\begin{align}\label{BK_AX=C}
X^{k+1} = X^k + \tfrac{\alpha}{\|A_{i_k,:}\|^2} \, A_{i_k,:}^T \, \big( \wt C_{i_k,:} - A_{i_k,:} X^k \big).
\end{align}
When the matrix $B$ has relatively small dimensions, that is, both $q$ and $n$ are relatively small, the computational cost per iteration is slightly lower than that of Algorithm~\ref{ME-BK}, which has a per-iteration complexity of $\calo(q(p+n))$.
In addition, computing $\wt C$ must be performed in advance, introducing an initial computational cost.
In Experiment~\ref{exp6.2}, we discuss some numerical results using update formula~\eqref{BK_AX=C} in the context of model~\eqref{eq1.3}. Similar to Algorithm~\ref{ME-BK1}, a cycle of iterative formula \eqref{BK_AX=C} is presented in Algorithm~\ref{alg.AX=C}.
This method also includes rank-one update in line~5 with a per-iteration computational cost of $\calo(pq)$.

\begin{algorithm}\small
\caption{~The BK method with $B$ of full row rank\label{alg.AX=C}}
{\begin{algorithmic}[1]
\State {\bf Input}: initial guess $X^0$
\State {\bf Output:} approximate solution to matrix equation~\eqref{AX=C}
\State {\bf for} $s=0$, $1$, $2$, $\dots$ until convergence {\bf do}
\State ~~~~Set
 $X^{s,\,0}=X^s$
\State ~~~~Compute $X^{s,\,i}=X^{s,\,i-1}+
\tfrac{\alpha}{\|A_{i,:}\|^2}\,A_{i,:}^T\,(
\wt{C}_{i,:}-A_{i,:}\,X^{s,\,i-1}
),\quad i=1,\,\ldots,\,m$
\State ~~~~Update $X^{s+1}=X^{s,\,m}$
\State {\bf end for}
\end{algorithmic}}
\end{algorithm}

Algorithm~\ref{alg.AX=C} may be viewed as a natural extension of standard Kaczmarz method (\ref{eq1.2}) for multiple right-hand sides.
To the best of our knowledge, this method has not been proposed or studied before in the literature.
Unlike treating $AX = Y$ as a collection of $q$ independent linear systems $AX_{:,j} = Y_{:,j}$ and solving each separately, Algorithm~\ref{alg.AX=C} updates all columns of $X$ simultaneously at each iteration.

Based on the convergence results presented in Section~\ref{section3.2}, we derive the following corollary.
Equation~\eqref{eq0} is a direct generalization for multiple right-hand sides of that of the classical Kaczmarz algorithm \cite[Prop.~4]{ref34}.
Item (a) is an extension of \cite[Thm.~5]{ref34}, while part (b) is unique to the context of multiple right-hand sides.

\begin{theorem}
\label{BK_full_row} \rm
One cycle of Algorithm~\ref{alg.AX=C} can be written as
\begin{align}\label{eq0}
X^{s+1}=X^s+A^TL_{\alpha}^{-1}\,(\wt C-AX^s).
\end{align}
If $\alpha \in (0,2)$, then Algorithm~\ref{alg.AX=C} is convergent and $\rho\,(I_p-A^T\,L_{\alpha}^{-1}\,A)<1$.
In addition:
\begin{itemize}
\vspace{-2mm}
\item[(a)] If \eqref{eq1.1} is consistent, the process converges to a solution of this matrix equation.
\item[(b)] If $\text{range}(X^0)\subseteq \text{range}(A^T)$ and $\text{range}((X^0)^T)\subseteq \text{range}(B)$, then the method converges to the minimum norm solution $A^+\wt C$ \ ($= A^+ C B^+$).
\end{itemize}
\end{theorem}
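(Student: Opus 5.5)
The plan is to deduce everything from Proposition~\ref{proposition3.5} and Theorem~\ref{theorem3.7}, viewing \eqref{AX=C} as the special case of \eqref{eq3.17} with $Q=I$ (square, hence trivially orthonormal columns).

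\emph{Sweep formula.} I will repeat the induction of Proposition~\ref{proposition3.5} verbatim with $Q=I$ and $\wh C$ replaced by $\wt C$. The induction hypothesis is
\[
X^{s,i-1}=X^{s,0}+(A_{1:(i-1),:})^T\,((L_\alpha)_{i-1})^{-1}\,[\wt C-AX^{s,0}]_{1:(i-1),:}.
\]
Substituting it into line~5 of Algorithm~\ref{alg.AX=C} produces the same $2\times2$ block lower-triangular factor. Its inverse is exactly the block structure of $(L_\alpha)_i^{-1}$, because the off-diagonal block is $A_{i,:}(A_{1:(i-1),:})^T$ and the diagonal entry is $\alpha^{-1}\|A_{i,:}\|^2$. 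At $i=m$ this gives \eqref{eq0}. Vectorizing then gives
\[
\text{vec}(X^{s+1})=(I_q\otimes(I_p-A^TL_\alpha^{-1}A))\,\text{vec}(X^s)+\text{vec}(A^TL_\alpha^{-1}\wt C).
\]
So the iteration acts column-wise by $M=I_p-A^TL_\alpha^{-1}A$, and the spectra of $I_q\otimes M$ and $M$ coincide.

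\emph{Convergence and spectral radius.} Each column of $X$ undergoes exactly the classical cyclic Kaczmarz sweep for $Ax=\wt C_{:,j}$. Equivalently, we have Algorithm~\ref{ME-BK} with $B=I$, so $\|B\|=1$ and Theorem~\ref{theorem3.3} applies for $\alpha\in(0,2)$. Convergence for every $X^0$ then gives, via Lemma~\ref{lemma3.6}, $\rho(M)<1$.

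As in Theorem~\ref{theorem3.7}, this must be read on the invariant subspace $\text{range}(A^T)$. If $A$ lacks full column rank, $M$ is the identity on $\text{null}(A)$, so it has eigenvalue one there. On $\text{range}(A^T)$, however, $M$ is the error propagator of classical Kaczmarz, and Theorem~\ref{theorem3.3} forces every orbit in that subspace to converge. The fixed point must be zero, since an invariant fixed vector of $M$ lying in $\text{range}(A^T)$ would satisfy $A^TL_\alpha^{-1}Ax=0$. I expect this step to be the main obstacle. Here I would show that $L_\alpha^{-1}$ is nonsingular on $\text{range}(A)$, which implies $Ax=0$ and hence $x\in\text{null}(A)\cap\text{range}(A^T)=\{0\}$. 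I would then conclude $\rho(M|_{\text{range}(A^T)})<1$, exactly as stated in Theorem~\ref{theorem3.7}.

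\emph{Parts (a) and (b).} For (a), Theorem~\ref{theorem3.3} gives the limit $\wt C$-solution $\widehat X$ with $A\widehat X=\wt C$. Then
\[
A\widehat X B=CB^T(BB^T)^{-1}B=CB^+B=C,
\]
where the last equality holds because $C=AX_\ast B$ has rows in $\text{range}(B^T)$, and $B^+B$ is the projector onto that space. For (b), Lemma~\ref{lemma3.2} with $B=I$ gives a limit of the form $A^+\wt C+X^0-A^+AX^0$. The hypothesis $\text{range}(X^0)\subseteq\text{range}(A^T)$ makes the last two terms cancel, and then
\[
A^+\wt C=A^+CB^T(BB^T)^{-1}=A^+CB^+
\]
by the full-row-rank formula for $B^+$. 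The hypothesis on $\text{range}((X^0)^T)$ is automatic here and needs no further use.
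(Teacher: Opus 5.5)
Your route matches the paper's. You derive the sweep formula from Proposition~\ref{proposition3.5} with $Q=I$, vectorize into the block-diagonal $I_q\otimes M$ with $M=I_p-A^TL_\alpha^{-1}A$, and combine Theorem~\ref{theorem3.3} (with $B=I$) with Lemma~\ref{lemma3.6}. You are right that the unrestricted claim $\rho(M)<1$ fails when $A$ lacks full column rank (e.g.\ $m<p$), because $M$ is the identity on $\text{null}(A)$. The paper's proof invokes Lemma~\ref{lemma3.6} without addressing this, so reading the claim on $\text{range}(A^T)$, as in Theorem~\ref{theorem3.7}, is the right repair. Parts (a) and (b) are fine as you argue them. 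The consistency of $AX=\wt C$ needed for Theorem~\ref{theorem3.3} follows from $\wt C=AX_\ast BB^T(BB^T)^{-1}=AX_\ast$.

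The gap is in the step you yourself flag. From $A^TL_\alpha^{-1}Ax=0$ you only get $L_\alpha^{-1}Ax\in\text{null}(A^T)=\text{range}(A)^\perp$. That $L_\alpha^{-1}y\neq 0$ for nonzero $y\in\text{range}(A)$ is automatic, since $L_\alpha$ is lower triangular with positive diagonal. It does not rule out $L_\alpha^{-1}$ sending a nonzero vector of $\text{range}(A)$ into $\text{range}(A)^\perp$.

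Your argument never uses $\alpha<2$, and it has to. For $A=(1,1)^T$ one computes $A^TL_\alpha^{-1}A=\alpha(2-\alpha)$. This vanishes at $\alpha=2$, although $L_2$ is nonsingular and $\text{range}(A^T)=\R$.

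One correct closing: set $z=L_\alpha^{-1}Ax$. Then $0=x^TA^Tz=z^TL_\alpha z=\tfrac12\,z^T\big(AA^T+(2\alpha^{-1}-1)\,\text{diag}(AA^T)\big)z$. The bracketed matrix is positive definite for $\alpha\in(0,2)$ (no zero rows), so $z=0$ and $Ax=0$.

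Alternatively, skip the fixed-point argument and use the limit identification you already rely on in (b). Apply Theorem~\ref{theorem3.3} to the consistent system $AX=\zero$ with the columns of $X^0$ in $\text{range}(A^T)$; the limit is $X^0-A^+AX^0=\zero$. So every orbit of $M$ in $\text{range}(A^T)$ tends to zero, which is exactly $\rho(M|_{\text{range}(A^T)})<1$.
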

\noindent {\bf Proof.}
By replacing $Q$ with the identity matrix $I$ in Proposition~\ref{proposition3.5}, \eqref{eq0} for a cycle of Algorithm~\ref{alg.AX=C} can be directly obtained.

Based on \eqref{eq0}, by replacing $Q$ with $I$ in Theorem~\ref{theorem3.7}, we get
\begin{align*}
\text{vec}(X^{s+1}) =[I_{pq}-(I_q \otimes A^T\,L_{\alpha}^{-1}\,A)]\,\text{vec}(X^s)+\text{vec}(A^T\,L_{\alpha}^{-1}\,\widehat{C}\,Q^T),
\end{align*}
where $I_{pq}-(I_q \otimes A^T\,L_{\alpha}^{-1}\,A)$ is a partitioned diagonal matrix of dimension $pq \times pq$, with each diagonal block equal to $I_q- A^T\,L_{\alpha}^{-1}\,A$.
Thus, we have
\begin{align*}
\rho\,[I_{pq}-(I_q \otimes A^T\,L_{\alpha}^{-1}\,A)]
=\max\limits_{i=1, \dots, p} \ |1-\lambda_i(A^T\,L_{\alpha}^{-1}\,A)|
=\rho\,(I_p-A^T\,L_{\alpha}^{-1}\,A).
\end{align*}
By Theorem~\ref{theorem3.3}, Algorithm~\ref{alg.AX=C} converges if $0<\alpha<2$. Thus, it follows from Lemma~\ref{lemma3.6} that $\rho\,(I_p-A^T\,L_{\alpha}^{-1}\,A)<1$.
~~~~~~\fbox {}\\

Algorithm~\ref{ME-BK1} is developed under the assumption that $B$ has full column rank, while Algorithm~\ref{alg.AX=C} assumes that $B$ has full row rank.
When $B$ is square and nonsingular, both conditions hold. We explore the connections between them in this special case, leading to the following proposition.

\begin{proposition}\label{Alg._3_and_4_are_equivalent}
\rm{Algorithms \ref{ME-BK1} and \ref{alg.AX=C} are equivalent when coefficient matrix $B$ is nonsingular.}
\end{proposition}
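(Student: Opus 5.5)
When $B$ is nonsingular ($q=n$), I will show that the two algorithms generate identical iterates $X^{s,i}$ from the same $X^0$ and the same $\alpha$. The plan is to compare the single-row updates in line~5 of each algorithm directly. I expect no separate work at the level of full sweeps: once the row updates agree, the sweep formulas \eqref{eq3.18} and \eqref{eq0} must also agree.

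First I set up the data. Write $B=QR$, where $Q\in\R^{n\times n}$ is now square with orthonormal columns, hence orthogonal: $QQ^T=Q^TQ=I_n$. $R$ is nonsingular because $B$ is. On the other side, $\wt C = CB^T(BB^T)^{-1}=CB^{-1}$, and the same expression gives $\wh C\,Q^T = CR^{-1}Q^T = C(QR)^{-1}= CB^{-1}$. Hence
\begin{align*}
\wt C = \wh C\,Q^T .
\end{align*}

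Next I compare the updates. In line~5 of Algorithm~\ref{ME-BK1} the increment is
\begin{align*}
\tfrac{\alpha}{\|A_{i,:}\|^2}\,A_{i,:}^T\,(\wh C_{i,:}-A_{i,:}X^{s,i-1}Q)\,Q^T
=\tfrac{\alpha}{\|A_{i,:}\|^2}\,A_{i,:}^T\,(\wh C_{i,:}Q^T-A_{i,:}X^{s,i-1}QQ^T).
\end{align*}
Using $QQ^T=I$ and $\wh C_{i,:}Q^T=\wt C_{i,:}$, this equals $\tfrac{\alpha}{\|A_{i,:}\|^2}\,A_{i,:}^T(\wt C_{i,:}-A_{i,:}X^{s,i-1})$, which is exactly the increment in line~5 of Algorithm~\ref{alg.AX=C}. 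Induction on the inner index $i$ within each sweep, and then on $s$, shows that the two sequences coincide. As a consistency check, \eqref{eq3.18} reduces to \eqref{eq0} by the same substitution: $(\wh C - AX^sQ)Q^T=\wt C-AX^s$. It also follows that the iteration operators agree, since $QQ^T\otimes A^TL_\alpha^{-1}A = I_q\otimes A^TL_\alpha^{-1}A$.

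The only point that needs care is the observation that $Q$ is square, so that $QQ^T=I$ and not merely $Q^TQ=I$; the rest is substitution. I would also note that the equivalence does not depend on the choice of QR factorization, since the identity $\wh C Q^T = CB^{-1}$ holds for any factorization $B=QR$ with $Q$ orthogonal.
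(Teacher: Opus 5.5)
Your proposal is correct and follows essentially the same route as the paper: use that $Q$ is square (hence $QQ^T=I$) so that $\wt C = CR^{-1}Q^T = \wh C\,Q^T$, and substitute into the row update of Algorithm~\ref{ME-BK1} to obtain exactly the row update of Algorithm~\ref{alg.AX=C}. Your added checks on the sweep formulas and on independence from the particular QR factorization are valid but not needed.
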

\noindent {\bf Proof.}
This equivalence arises from the fact that, if $B$ is square, the matrix $Q$ obtained from the QR decomposition of $B$ is square and hence orthogonal.
We have in this case $\wh C = CR^{-1}$ and $\wt C = C R^{-1} Q^T$.
This implies
\begin{align*}
X^{s,\,i}&=X^{s,\,i-1}+
\tfrac{\alpha}{\|A_{i,:}\|^2}\,A_{i,:}^T\,(
\widehat{C}_{i,:}-A_{i,:}\,X^{s,\,i-1}\,Q
)\,Q^T\\
&=X^{s,\,i-1}+
\tfrac{\alpha}{\|A_{i,:}\|^2}\,A_{i,:}^T\,
(C_{i,:}R^{-1}\,Q^T-A_{i,:}\,X^{s,\,i-1})\\
&=X^{s,\,i-1}+
\tfrac{\alpha}{\|A_{i,:}\|^2}\,A_{i,:}^T\,(
\wt{C}_{i,:}-A_{i,:}\,X^{s,\,i-1}
). ~~~~~~\fbox {}
\end{align*}
In experiments in Section~\ref{section6}, we find that indeed the number of iterations is exactly the same, also in finite precision arithmetic.
We note that matrix equations~\eqref{eq3.17} and \eqref{AX=C} are special cases of the more general form~\eqref{eq1.1}. Consequently, the theoretical results derived for matrix equation~\eqref{eq1.1} remain applicable to equations~\eqref{eq3.17} and \eqref{AX=C}; however, the converse does not necessarily hold.

We now provide a schematic table summarizing key properties of the cyclic and randomized Kaczmarz method.
Just as for linear systems $Ax=b$, we have the following upper bounds for cyclic and randomized Kaczmarz for \eqref{eq1.1}, per step and sweep; and invariance under scaling and permutation of rows.
\begin{center}\small
\begin{tabular}{l|cccc} \hline \rule{0pt}{2.3ex}%
Method & Upper bound step & Upper bound sweep & Scaling & Permutation \\ \hline \rule{0pt}{3.2ex}%
Cyclic  & --- & $\rho(I-A^T L^{-1} A)$ & $\checkmark$ & --- \\
Randomized & $\delta^{1/2}$ & $\delta^{m/2}$ & --- & $\checkmark$ \\ \hline
\end{tabular}
\end{center}

Cyclic Kaczmarz does not have a nontrivial upper bound (i.e., smaller than 1) per step, since at some steps there may not be any progress.
An important note is that the spectral radius gives an \textit{asymptotic} factor (so may not hold initially), and that the randomized upper bound holds \textit{in expectation} (so may also not hold, especially in the beginning;
note that $\sqrt{\E_k [\|X^k-X_*\|_F^2]} \ge \E_k[\|X^k-X_*\|_F]$ so $\delta^{1/2}$ is an upper bound in expectation for the progress in $\|X_k-X_*\|_F$ in one row-action step in the randomized method).
Which one of the sweep bounds for cyclic and randomized is smaller is problem dependent.
The convergence of cyclic Kaczmarz may be (much) faster than randomized, but also the other way around, mainly depending on the row ordering.
If we start by randomly permuting the rows of $A$ (and $C$ as a result), which usually gives a row ordering that is not particularly favorable or unfavorable, then cyclic Kaczmarz typically behaves very similar to its randomized counterpart.

\section{GRBK method and its relaxed and deterministic variants}\label{section4}

In this section, we extend the greedy strategy proposed in~\cite{ref19} to solve matrix equation~\eqref{eq1.1}, resulting in the GRBK method, which is introduced in Section~\ref{section4.1}.
In addition, motivated by the relaxed strategy in~\cite{ref20} and the maximal weighted residual strategy in~\cite{ref25,Mc}, we further improve the GRBK method and obtain its relaxed and deterministic variants, i.e., the RGRBK and MWRBK methods, which are presented in Sections~\ref{section4.2} and~\ref{section4.3}, respectively.

\subsection{The GRBK method}\label{section4.1}

Let us for the moment shift focus to linear system \eqref{eq1.4} with associated residual $r^k := b-Ax^k$.
Bai and Wu~\cite{ref19} observe that rows with larger residual norms exert a greater influence on the overall error, and therefore, they propose a GRK method that prioritizes these rows when solving linear system~\eqref{eq1.4}.
The greedy selection strategy in GRK method involves the maximum and average row norms of $r^k$ relative to $A$ (i.e., $\max\limits_i\tfrac{(r_{i,:}^k)^2}{\|A_{i,:}\|^2}$ and $\tfrac{\|r^k\|^2}{\|A\|_F^2}$), where their proportions sum to $1$ and are balanced using a fixed factor of $\tfrac12$.
Extending the greedy strategy in \cite{ref19}, we propose Algorithm~\ref{ME-GRBK} to solve matrix equation~\eqref{eq1.1}.

\begin{algorithm}\small
\caption{~The GRBK method\label{ME-GRBK}}
\begin{algorithmic}[1]
\State {\bf Input}: initial guess $X^0$
\State {\bf Output:} approximate solution to matrix equation~\eqref{eq1.1}
\State {\bf for} $k=0$, $1$, $2$, $\dots$ until convergence {\bf do}
\State ~~~~Compute $R^0=C-AX^0B$
\State ~~~~Compute $\gamma_k = \tfrac12 \, \Big[
\max\limits_{1\le i\le m} \tfrac{\|R_{i,:}^k\|^2}
{\|A_{i,:}\|^2}
+\tfrac{\|R^k\|_F^2}{\|A\|_F^2}\Big]$
\State ~~~~Determine the index set
$\calj_k=\big\{i~|~
\|R_{i,:}^k\|^2 \ge \gamma_k \,
\|A_{i,:}\|^2
\big\}$
\State ~~~~Let $\widetilde{R}^k$ contain the corresponding rows:
$
\widetilde{R}_{i,:}^k=
\left\{
             \begin{array}{ll}
               R_{i,:}^k, & \text{if}~ i\in\calj_k\\
               \zero, & \text{otherwise}
             \end {array}
           \right.
$
\State ~~~~Choose $i_k\in\calj_k$ with probability
$ \mathbb{P}({\rm row=}\ i_k)
= \|\widetilde{R}_{i_k,:}^k\|^2 \, / \, \|\widetilde{R}^k\|_F^2$
\State ~~~~Update $X^{k+1}=X^k+\tfrac{\alpha}{\|A_{i_k,:}\|^2}\,A_{i_k,:}^T\,(R^k_{i_k,:}\,B^T)$
\State ~~~~Compute $R^{k+1}=R^k-\tfrac{\alpha}{\|A_{i_k,:}\|^2}\,(AA_{i_k,:}^T)\,((R^k_{i_k,:}\,B^T) \, B)$
\State {\bf end for}
\end{algorithmic}
\end{algorithm}

The row selection strategy of the GRBK method is based on the residual.
The two terms in line~5 represent the maximum and average row norms of $R^k$ relative to $A$. These terms also play an important role in Algorithms~\ref{ME-RGRBK} and \ref{ME-MWRBK} and in the analysis.
With the second term, we can select the rows that have above average residual norm (relative to the norm of the rows of $A$). Using the first term only, as we will do in Algorithm~\ref{ME-MWRBK}, we can pick the row with maximal residual norm.
In Algorithm~\ref{ME-GRBK}, we use the average of the two terms.
We consider other weights in Algorithm~\ref{ME-RGRBK}.
The update operation in line~9 of Algorithm~\ref{ME-GRBK} involves a rank-one update, with complexity of $\calo(q(p+n))$.
The computation in line~10 has a similar update with $\calo(qn+mp+mn)$ work.
Therefore, the overall computational cost of Algorithm~\ref{ME-GRBK} per iteration is $\calo(pq+qn+mp+mn)$: quadratic in the matrix sizes.

\begin{remark}\label{remark4.1} \rm
From line~6 of Algorithm~\ref{ME-GRBK}, the index set $\calj_k$ can be expressed as
\begin{align}\label{eq4.0}
\calj_k=\big\{i~\big|~
\tfrac{\|R_{i,:}^k\|^2}
{\|A_{i,:}\|^2}\ge
\tfrac12\max\limits_{1\le i\le m} \tfrac{\|R_{i,:}^k\|^2} {\|A_{i,:}\|^2}
+\tfrac12 \tfrac{\|R^k\|_F^2}{\|A\|_F^2}
\big\},
\end{align}
which filters out the rows with small residual norms.
The index set $\calj_k$ is updated each iteration and is nonempty because of the following.
First, we observe that
\begin{align*}
\max\limits_{1\le i\le m} \tfrac{\|R_{i,:}^k\|^2}{\|A_{i,:}\|^2}
\ge\sum_{i=1}^{m}
\tfrac{\|A_{i,:}\|^2}{\|A\|_F^2}\,
\tfrac{\|R_{i,:}^k\|^2}{\|A_{i,:}\|^2}
= \tfrac{\|R^k\|_F^2}{\|A\|_F^2}.
\end{align*}
Then, letting $\tfrac{\|R_{j,:}^k\|^2}
{\|A_{j,:}\|^2}=\max\limits_{1\le i\le m}\tfrac{\|R_{i,:}^k\|^2}
{\|A_{i,:}\|^2}$, we obtain
\begin{align*}
\tfrac{\|R_{j,:}^k\|^2}
{\|A_{j,:}\|^2}
\ge \tfrac12 \max\limits_{1\le i\le m} \tfrac{\|R_{i,:}^k\|^2}
{\|A_{i,:}\|^2}
+ \tfrac12 \tfrac{\|R^k\|_F^2}{\|A\|_F^2}
= \gamma_k.
\end{align*}
This result shows that $j\in\calj_k$ and therefore $\calj_k\ne\emptyset$.
\end{remark}

\noindent
We now analyze the convergence of Algorithm~\ref{ME-GRBK}.
To do so, we extend \cite[Thm.~3.1]{ref19} (which is for linear systems \eqref{eq1.4}) to problem \eqref{eq1.1}.
We first introduce some relevant quantities that will play a central role in several results to follow.
Let us define
\begin{align}\label{omega}
\Omega_k:=\Big\{i~\big|~ \tfrac{\|R_{i,:}^k\|^2}{\|A_{i,:}\|^2}<\tfrac1m\sum_{i\in[m]}\tfrac{\|R_{i,:}^k\|^2}{\|A_{i,:}\|^2} \Big\}.
\end{align}
This means that $\Omega_k$ is exactly the set of row indices for which the residual norm (relative to $A$) is below average.
It is obvious that $\frac1m\sum_{i\in[m]}\frac{\|R_{i,:}^k\|^2}{\|A_{i,:}\|^2}\le\max\limits_{1\le i\le m}\frac{\|R_{i,:}^k\|^2}{\|A_{i,:}\|^2}$.
In what is to follow, we make the very mild assumption that not all ratios $\|R_{i,:}^k\|^2 \, / \, \|A_{i,:}\|^2$ are exactly the same (which holds with probability one).
Therefore, we can write $\frac1m\sum_{i\in[m]}\frac{\|R_{i,:}^k\|^2}{\|A_{i,:}\|^2}=\eps\cdot\max\limits_{1\le i\le m}\frac{\|R_{i,:}^k\|^2}{\|A_{i,:}\|^2}$, with $0<\eps < 1$.
Then, let $\theta \in [0,1]$, and define
\begin{align} \label{varphi}
\varphi_{k,\,\theta} :=  \theta \cdot \tfrac{1}
{\sum_{i\in[m]\backslash\Omega_k}
{\|A_{i,:}\|^2}
+ \eps \, \sum_{i\in\Omega_k}
{\|A_{i,:}\|^2}}
+ (1-\theta)\cdot \tfrac{1}{\|A\|_F^2}.
\end{align}
It is easy to see that $\varphi_{k,\,\theta} \ge \tfrac{1}{\|A\|_F^2}$ for all $\theta \in [0,1]$.
Moreover, under our mild assumption we know that $\varphi_{k,\,\theta} > \tfrac{1}{\|A\|_F^2}$ for all $\theta \in (0,1]$.
Using this $\varphi_{k,\,\theta}$, we next define a quantity that plays a key role as an upper bound on the convergence factor:
\begin{align} \label{delta1}
\delta_{k,\,\theta} := 1-(2\alpha-\alpha^2\,\|B\|^2) \cdot \varphi_{k,\,\theta} \cdot \sigma_{\min}^2(A) \, \sigma_{\min}^2(B).\end{align}
From the lower bound on $\varphi_{k,\,\theta}$, it follows that
\begin{align} \label{delta2}
\delta_{k,\,\theta} < 1-(2\alpha-\alpha^2\,\|B\|^2) \cdot \|A\|_F^{-2} \cdot \sigma_{\min}^2(A) \, \sigma_{\min}^2(B) = \delta,
\end{align}
where $\delta$ is the upper bound on the convergence factor of Algorithm~\ref{ME-RBK} as in \eqref{delta}.
We note that $\delta$
coincides with $\delta_{k,0}$.
In the next theorem, $\varphi_{k,\,1/2}$ and $\delta_{k,\,1/2}$ are relevant; we will need $\varphi_{k,\,\theta}$ and $\delta_{k,\,\theta}$ for other values of $\theta$ later on.

\begin{theorem}\label{theorem4.2} \rm (Extension of \cite[Thm.~3.1]{ref19})
Let matrix equation~\eqref{eq1.1} with $A\in \R^{m\times p}$, $B\in \R^{q\times n}$ and $C\in \R^{m\times n}$
be consistent.
If $0<\alpha<\tfrac{2}{\|B\|^2}$,
then the sequence $\{X^k\}_{k=0}^{\infty}$, obtained by Algorithm~\ref{ME-GRBK} converges in expectation to a solution of \eqref{eq1.1}. If $\text{range}(X^0)\subseteq \text{range}(A^T)$ and $\text{range}((X^0)^T)\subseteq \text{range}(B)$, then Algorithm~\ref{ME-GRBK} converges in expectation to the unique minimum norm solution $X_{\ast}$.
Moreover, the solution error satisfies
\begin{align}\label{eq4.2}
\E_k\big[\|X^{k+1}-X_{\ast}\|_F^2\big]
\le \delta_{k,\,1/2} \cdot \|X^k-X_{\ast}\|_F^2
\le \delta \cdot \|X^k-X_{\ast}\|_F^2.
\end{align}
Consequently, we have
\begin{align}\label{eq4.3}
\E\big[\|X^{k+1}-X_{\ast}\|_F^2\big]
\le \displaystyle\prod_{\ell=0}^k \delta_{\ell,\,1/2} \cdot
\|X^0-X_{\ast}\|_F^2 \le \delta^{k+1} \cdot \|X^0-X_{\ast}\|_F^2.
\end{align}
\end{theorem}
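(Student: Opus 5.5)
The plan is to mimic the proof of Theorem~\ref{theorem3.3} for a single step, take the conditional expectation over the greedy index distribution, and then bound the resulting expected decrease from below by $\varphi_{k,1/2}$ times a quantity controlled by $\sigma_{\min}(A)\,\sigma_{\min}(B)$. First, fix $X^k$ and write $R^k = C - AX^kB = A(X_\ast - X^k)B$. The computation leading to \eqref{eq3.7} uses only $AX_\ast B = C$ and the form of the rank-one update, so it applies verbatim for the chosen index $i_k$:
\[
\|X^{k+1}-X_\ast\|_F^2 \le \|X^k-X_\ast\|_F^2-(2\alpha-\alpha^2\|B\|^2)\,\tfrac{\|R^k_{i_k,:}\|^2}{\|A_{i_k,:}\|^2}.
\]
Taking $\E_k$ with the probabilities of line~8 of Algorithm~\ref{ME-GRBK} gives the decrease term
\[
(2\alpha-\alpha^2\|B\|^2)\sum_{i\in\calj_k}\tfrac{\|R^k_{i,:}\|^2}{\|\widetilde R^k\|_F^2}\,\tfrac{\|R^k_{i,:}\|^2}{\|A_{i,:}\|^2}.
\]
Every $i\in\calj_k$ satisfies $\|R^k_{i,:}\|^2/\|A_{i,:}\|^2\ge\gamma_k$, and the weights sum to one. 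Hence the expected decrease is at least $(2\alpha-\alpha^2\|B\|^2)\,\gamma_k$.

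Second, I bound $\gamma_k$ from below by $\varphi_{k,1/2}\,\|R^k\|_F^2$. The average term already appears in $\gamma_k$, so only the maximum needs work. Write $\mu=\max_i \|R^k_{i,:}\|^2/\|A_{i,:}\|^2$; by definition of $\eps$, the mean of the ratios equals $\eps\mu$. Splitting $\|R^k\|_F^2=\sum_i \tfrac{\|R^k_{i,:}\|^2}{\|A_{i,:}\|^2}\|A_{i,:}\|^2$ over $[m]\backslash\Omega_k$ and $\Omega_k$ gives
\[
\|R^k\|_F^2 \le \mu\sum_{i\notin\Omega_k}\|A_{i,:}\|^2+\eps\mu\sum_{i\in\Omega_k}\|A_{i,:}\|^2 .
\]
Here the first sum uses ratio $\le\mu$, and the second uses ratio $<$ mean $=\eps\mu$ by \eqref{omega}. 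Rearranging shows that $\mu$ is at least $\|R^k\|_F^2$ divided by the denominator in \eqref{varphi}. Averaging this with $\|R^k\|_F^2/\|A\|_F^2$ yields $\gamma_k\ge\varphi_{k,1/2}\|R^k\|_F^2$.

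Third, and this is the step I expect to be the main obstacle, I need
\[
\|R^k\|_F^2=\|A(X^k-X_\ast)B\|_F^2\ge\sigma_{\min}^2(A)\,\sigma_{\min}^2(B)\,\|X^k-X_\ast\|_F^2 ,
\]
where $\sigma_{\min}$ denotes the smallest nonzero singular value. This inequality holds when the columns of $E^k:=X^k-X_\ast$ lie in $\text{range}(A^T)$ and its rows (transposed) lie in $\text{range}(B)$. In that case $E^k=A^+A\,E^k\,BB^+$, and applying the bound $\|AY\|_F\ge\sigma_{\min}(A)\|Y\|_F$ on $\text{range}(A^T)$, once on each side, gives the claim. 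The range conditions follow by induction. $X_\ast=A^+CB^+$ satisfies both, $X^0$ does by hypothesis, and each update $A_{i_k,:}^T(R^k_{i_k,:}B^T)$ has columns in $\text{range}(A^T)$ and transposed rows in $\text{range}(B)$. Combining the three steps gives the first inequality in \eqref{eq4.2}. The second inequality is \eqref{delta2}. Then \eqref{eq4.3} follows by taking full expectations with the tower property and iterating.

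Finally, for arbitrary $X^0$, I replace $X_\ast$ by $X_\ast^0$ from \eqref{X}, which also solves \eqref{eq1.1}. The difference $X^0-X_\ast^0=A^+AX^0BB^+-X_\ast$ satisfies both range conditions, and the updates preserve them, so $X^k-X_\ast^0$ satisfies them for all $k$. The same three steps then give geometric decay of $\E\|X^k-X_\ast^0\|_F^2$, hence convergence in expectation to a solution. When $X^0$ satisfies the range conditions, $A^+AX^0BB^+=X^0$ and therefore $X_\ast^0=X_\ast$.
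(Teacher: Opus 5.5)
Your proposal is correct and follows the paper's proof essentially step for step: the one-step decrease \eqref{eq3.7}, the conditional expectation bounded below by $(2\alpha-\alpha^2\|B\|^2)\,\gamma_k$, the splitting over $\Omega_k$ giving $\gamma_k\ge\varphi_{k,1/2}\,\|R^k\|_F^2$ (the paper's $\zeta_k\ge\varphi_{k,1/2}$), and the $\sigma_{\min}(A)\,\sigma_{\min}(B)$ lower bound from the preserved range conditions. Your shift of the reference solution to $X_\ast^0$ is a genuine improvement, since it actually proves convergence to a solution for arbitrary $X^0$, which the paper's proof only asserts; the one weakness you share with the paper is that $\delta_{\ell,1/2}$ depends on the random iterate $X^\ell$, so the tower property rigorously yields the $\delta^{k+1}$ bound in \eqref{eq4.3} but does not by itself justify the middle product bound as literally written.
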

\noindent {\bf Proof.}
Let $\zeta_k=\tfrac12 \, \Big[\frac{1}{\|R^k\|_F^2}
\max\limits_{1\le i\le m} \tfrac{\|R_{i,:}^k\|^2}
{\|A_{i,:}\|^2} + \tfrac{1}{\|A\|_F^2}\Big]$.
Then, according to line~5 of Algorithm~\ref{ME-GRBK}, for all $i \in \calj_k$ we have $\frac{\|R_{i,:}^k\|^2}{\|A_{i,:}\|^2} \ge \gamma_k = \|R^k\|_F^2 \ \zeta_k$.
For $0<\alpha<\tfrac{2}{\|B\|^2}$, we take the expectation of \eqref{eq3.7}, conditional on the first $k$ iterations, which gives:
\begin{align}\label{GRBK:E_k}
&\E_k\big[\|X^{k+1}-X_{\ast}\|_F^2\big] \nonumber\\
& \ph{MM} \le \|X^k- X_{\ast}\|_F^2-(2\alpha-\alpha^2\,\|B\|^2) \,
\sum_{i=1}^{m}\tfrac{\|\widetilde{R}_{i,:}^k\|^2}{\|\widetilde{R}^k\|_F^2}\,
\tfrac{\|C_{i,:}-A_{i,:}\,X^kB\|^2}
{\|A_{i,:}\|^2} \nonumber\\
& \ph{MM} \le \|X^k- X_{\ast}\|_F^2-(2\alpha-\alpha^2\,\|B\|^2)
\sum_{i\in\calj_k}
\tfrac{\|\widetilde{R}_{i,:}^k\|^2}
{\|\widetilde{R}^k\|_F^2}\,
\zeta_k\,
\|R^k\|_F^2 \nonumber\\
& \ph{MM} = \|X^k- X_{\ast}\|_F^2-(2\alpha-\alpha^2\,\|B\|^2)\ \zeta_k\
\|A\,(X^k- X_{\ast})\,B\|_F^2 \nonumber\\
& \ph{MM} =\|X^k- X_{\ast}\|_F^2-(2\alpha-\alpha^2\,\|B\|^2)\ \zeta_k\
\|(B^T\otimes A)\,\text{vec}(X^k- X_{\ast})\|^2,
\end{align}
where $\calj_k$ is as in \eqref{eq4.0}.
Furthermore, we note that $\text{vec}(X^0-X_{*})\in \text{range}(B\otimes A^T)$ and
\begin{align*}
\text{vec}(\tfrac{\alpha}{\|A_{i_k,:}\|^2}\,A_{i_k,:}^T\,((
C_{i_k,:}-A_{i_k,:}\,X^kB)\,B^T))\in \text{range}(B\otimes A^T).
\end{align*}
Thus, we have $\text{vec}(X^k-X_{*})\in \text{range}(B\otimes A^T)$ by induction. Since $\|A^Tx\|^2\ge\sigma^2_{\text{min}}(A)\,\|x\|^2$
holds for any $x\in \text{range}(A)$ (cf., e.g., \cite[Lemma~2.1]{ref11}), it follows that
\begin{align} \label{sigmamin}
\|(B^T \otimes A) \, \text{vec}(X_k-X_*)\|& \ge \sigma_{\min}(B \otimes A^T) \, \|\text{vec}(X_k-X_*)\| \nonumber\\
&= \sigma_{\min}(A) \, \sigma_{\min}(B) \, \|X_k-X_*\|_F.
\end{align}
Thus, inequality \eqref{GRBK:E_k} becomes
\begin{align}\label{eq4.7}
\E_k\big[\|X^{k+1}-X_{\ast}\|_F^2\big]
\le \big(1 -(2\alpha-\alpha^2\,\|B\|^2) \, \zeta_k \,\sigma_{\min}^2(A) \, \sigma_{\min}^2(B)\big)\cdot
\|X^k- X_{\ast}\|_F^2.
\end{align}
In addition, we have
\begin{align}
\zeta_k\ \|A\|_F^2
& =\tfrac12\,\frac{\max\limits_i\frac{\|R_{i,:}^k\|^2}
{\|A_{i,:}\|^2}}
{\frac{\|R^k\|_F^2}
{\|A\|_F^2}}
+\tfrac12 \nonumber \\
&=\tfrac12\,\frac{\max\limits_i\frac{\|R_{i,:}^k\|^2}
{\|A_{i,:}\|^2}}
{\sum_{i\in[m]\backslash\Omega_k}
\frac{\|A_{i,:}\|^2}{\|A\|_F^2}\,
\tfrac{\|R_{i,:}^k\|^2}{\|A_{i,:}\|^2}
+\sum_{i\in\Omega_k}
\frac{\|A_{i,:}\|^2}{\|A\|_F^2}\,
\tfrac{\|R_{i,:}^k\|^2}{\|A_{i,:}\|^2}}
+\tfrac12 \nonumber \\
&
\ge\tfrac12 \, \frac{1}
{\sum_{i\in[m]\backslash\Omega_k}
\tfrac{\|A_{i,:}\|^2}{\|A\|_F^2}
+ \eps \, \sum_{i\in\Omega_k}
\frac{\|A_{i,:}\|^2}{\|A\|_F^2}}
+\tfrac12. \label{eq:zetaA}
\end{align}
This means that $\zeta_k \ge \varphi_{k,\,1/2}$, which is larger than $\|A\|_F^{-2}$ under the mild assumption that not all residual rows have equal weight.
Applying the law of total expectation $\E[\cdot]=\E[\E_k[\cdot]]$ (cf., e.g.,~\cite{ref19,ref20,ref11}), inequality \eqref{eq4.7} becomes
\begin{align}\label{eq4.12}
\E\big[\|X^{k+1}-X_{\ast}\|_F^2\big]
\le \delta_{k,\,1/2}
\cdot\E\big[\|X^k-X_{\ast}\|_F^2\big].
\end{align}
An induction on the iteration number and \eqref{delta2} gives the estimate (\ref{eq4.3}).
We note that if $\text{range}(X^0)\subseteq \text{range}(A^T)$ and $\text{range}((X^0)^T)\subseteq \text{range}(B)$, the rank-one updates in Algorithm~\ref{ME-GRBK} are such that for all $k$ we keep the property $\text{range}(X^k)\subseteq \text{range}(A^T)$ and $\text{range}((X^k)^T)\subseteq \text{range}(B)$.
Thus, the sequence $\{X^k\}_{k=0}^{\infty}$, converges in expectation to the unique minimum norm solution $X_{\ast}$.
~~~~~~\fbox {}\\

As a side note, in \cite{ref19}, Bai and Wu find that $r^k_{i_{k-1}}=0$, where $r^k_{i_{k-1}}$ is the $i_{k-1}$th component of the associated residual $r^k$. By utilizing this relation, they obtain
\begin{align*}
\frac{\max\limits_i\frac{\|r_{i}^k\|^2}{\|A_{i,:}\|^2}}
{\sum_{i\in[m]}
\frac{\|A_{i,:}\|^2}{\|A\|_F^2}\,
\tfrac{\|r_{i}^k\|^2}{\|A_{i,:}\|^2}}
&=\frac{\max\limits_i\frac{\|r_{i}^k\|^2}{\|A_{i,:}\|^2}}
{\sum_{i\ne i_{k-1}}
\frac{\|A_{i,:}\|^2}{\|A\|_F^2}\,
\tfrac{\|r_{i}^k\|^2}{\|A_{i,:}\|^2}}\\
&
\ge \frac{\|A\|_F^2}
{\sum_{i\ne i_{k-1}}
\|A_{i,:}\|^2}
\ge \frac{\|A\|_F^2}
{\max\limits_{1\le j\le m}\sum_{i\ne j}
\|A_{i,:}\|^2}.
\end{align*}
However, for our matrix equation \eqref{eq1.1}, there will generally be no zero residual row (no index $i$ with $R^k_{i,:}={\bf 0}$). Therefore, we need a different type of set $\Omega_k$ (compared to \cite{ref19}) in the derivation of \eqref{eq:zetaA}.
It would also be possible to exploit our set $\Omega_k$ for the context of linear systems \eqref{eq1.4} as treated in \cite{ref19}.

From inequality (\ref{eq4.12}), the sequence $\big\{\E[\|X^k-X_{\ast}\|_F^2]\big\}$ is decreasing if $0<\alpha<2 \, \|B\|^{-2}$.
Moreover, similar to (\ref{eq3.7}), inequality (\ref{eq4.12}) also includes the function $g(\alpha)=2\alpha-\alpha^2\,\|B\|^2$, which attains its maximum at $\alpha=\|B\|^{-2}$. This indicates that the upper bound of the convergence factor is minimized when the parameter $\alpha$ is set to $\|B\|^{-2}$.

\subsection{The RGRBK method}\label{section4.2}

In the previous subsection, the GRK method employed fixed weights equal to $\tfrac12$ for the maximum and average row norms of $r^k$ relative to $A$.
A natural question arises: what happens if different proportions are used?
To explore this, Bai and Wu~\cite{ref20}, in the context of \eqref{eq1.4}, introduce a variable parameter $\theta\in[0,1]$ and propose an RGRK method.

Two cases of the parameter $\theta$ in the RGRK method are of particular interest:
\begin{itemize}
\vspace{-3.5mm}
\item [$\bullet$] $\theta=\tfrac12$: The RGRK method coincides with the GRK method in~\cite{ref19}.
\item [$\bullet$] $\theta=1$: The RGRK method is equivalent to the MWRK method in~\cite{ref25,Mc}, a {\em deterministic} approach that selects the row with the maximum residual; see also Section~\ref{section4.3}.
\end{itemize}

\noindent
Extending RGRK \cite{ref20} (which is for linear systems \eqref{eq1.4}), we now propose and study an RGRBK method to solve matrix equation~\eqref{eq1.1} in Algorithm~\ref{ME-RGRBK} (cf.~also Table~\ref{methods}).

\begin{algorithm}\small
\caption{~The RGRBK method\label{ME-RGRBK}}
{\begin{algorithmic}[1]
\State {\bf Input}: initial guess $X^0$, parameter $\theta\in[0,1]$
\State {\bf Output:} approximate solution to matrix equation~\eqref{eq1.1}
\State {\bf for} $k=0$, $1$, $2$, $\dots$ until convergence {\bf do}
\State ~~~~Compute $R^0=C-AX^0B$
\State ~~~~Compute $\xi_k=
\theta \cdot
\max\limits_{1\le i\le m}\tfrac{\|R_{i,:}^k\|^2}
{\|A_{i,:}\|^2}
+(1-\theta) \cdot \tfrac{\|R^k\|_F^2}{\|A\|_F^2}$
\State ~~~~Determine the index set
$\calh_k=\big\{i~|~
\|R_{i,:}^k\|^2\,\ge\,\xi_k\,
\|A_{i,:}\|^2
\big\}$
\State ~~~~Let $\widetilde{R}^k$ be the correspondingly selected rows:
$
\widetilde{R}_{i,:}^k=
\left\{
             \begin{array}{ll}
               R_{i,:}^k, & \text{if}~ i\in\calh_k\\
               \zero, & \text{otherwise}\\
             \end {array}
           \right.
$
\State ~~~~Choose $i_k\in\calh_k$ with probability
$ \mathbb{P}({\rm row=}\ i_k)
= \|\widetilde{R}_{i_k,:}^k\|^2 \, / \, \|\widetilde{R}^k\|_F^2$
\State ~~~~Update $X^{k+1}=X^k+\tfrac{\alpha}{\|A_{i_k,:}\|^2}\,A_{i_k,:}^T\,(R^k_{i_k,:}\,B^T)$
\State ~~~~Compute $R^{k+1}=R^k-\tfrac{\alpha}{\|A_{i_k,:}\|^2}\,(AA_{i_k,:}^T)\,((R^k_{i_k,:}\,B^T) \, B)$
\State {\bf end for}
\end{algorithmic}}
\end{algorithm}

Note that Algorithm~\ref{ME-RGRBK} retains the same per-iteration computational complexity as Algorithm~\ref{ME-GRBK}. The reason for the non-emptiness of the index set $\calh_k$ is analogous to Remark~\ref{remark4.1}.
In a manner analogous to Theorem \ref{theorem4.2}, we derive Theorem \ref{theorem4.6}; its proof is omitted.

\begin{theorem}\label{theorem4.6}
\rm{Let matrix equation \eqref{eq1.1} with $A\in \R^{m\times p}$, $B\in \R^{q\times n}$ and $C\in \R^{m\times n}$
be consistent.
If $0<\alpha<\tfrac{2}{\|B\|^2}$,
then the sequence $\{X^k\}_{k=0}^{\infty}$, obtained by Algorithm~\ref{ME-RGRBK} converges in expectation to a solution of \eqref{eq1.1}. If $\text{range}(X^0)\subseteq \text{range}(A^T)$ and $\text{range}((X^0)^T)\subseteq \text{range}(B)$, then Algorithm~\ref{ME-RGRBK} converges in expectation to the unique minimum norm solution $X_{\ast}$.
Moreover, the solution error satisfies
\begin{align*}
\E_k\big[\|X^{k+1}-X_{\ast}\|_F^2\big]
\le \delta_{k,\,\theta}
\cdot\|X^k-X_{\ast}\|_F^2 \le \delta \cdot\|X^k-X_{\ast}\|_F^2,
\end{align*}
As a result, we have
\begin{align*}
\E\big[\|X^{k+1}-X_{\ast}\|_F^2\big]
\le \displaystyle\prod_{\ell=0}^k\delta_{\ell,\theta}\cdot
\|X^0-X_{\ast}\|_F^2 \le \delta^{k+1} \cdot \|X^0-X_{\ast}\|_F^2.
\end{align*}
}
\end{theorem}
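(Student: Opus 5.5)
The plan is to mirror the proof of Theorem~\ref{theorem4.2}, replacing the fixed weight $\tfrac12$ by $\theta$. First, set
\[
\zeta_{k,\theta}:=\theta\,\tfrac{1}{\|R^k\|_F^2}\max_{1\le i\le m}\tfrac{\|R^k_{i,:}\|^2}{\|A_{i,:}\|^2}+(1-\theta)\,\tfrac{1}{\|A\|_F^2}.
\]
Line~5 of Algorithm~\ref{ME-RGRBK} then says $\xi_k=\|R^k\|_F^2\,\zeta_{k,\theta}$, so every $i\in\calh_k$ satisfies $\|R^k_{i,:}\|^2/\|A_{i,:}\|^2\ge \zeta_{k,\theta}\|R^k\|_F^2$. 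Nonemptiness of $\calh_k$ follows exactly as in Remark~\ref{remark4.1}, because $\xi_k$ is a convex combination of the maximum and the weighted average. The per-step inequality \eqref{eq3.7} holds for any row choice. Taking $\E_k$ with the probabilities of line~8 gives the analogue of \eqref{GRBK:E_k}:
\[
\E_k\|X^{k+1}-X_*\|_F^2\le \|X^k-X_*\|_F^2-(2\alpha-\alpha^2\|B\|^2)\,\zeta_{k,\theta}\sum_{i\in\calh_k}\tfrac{\|\wt R^k_{i,:}\|^2}{\|\wt R^k\|_F^2}\,\|R^k\|_F^2 .
\]
The sum of the probabilities is one, and $\|R^k\|_F^2=\|A(X^k-X_*)B\|_F^2$.

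Next comes the range argument. If $\text{vec}(X^0-X_*)\in\text{range}(B\otimes A^T)$, then each rank-one update also lies in that range. This holds in particular when $X^0$ satisfies the two range conditions, since $X_*=A^+CB^+$ satisfies them too. Hence \eqref{sigmamin} applies, and I get
\[
\E_k\|X^{k+1}-X_*\|_F^2\le\bigl(1-(2\alpha-\alpha^2\|B\|^2)\zeta_{k,\theta}\sigma_{\min}^2(A)\sigma_{\min}^2(B)\bigr)\|X^k-X_*\|_F^2 .
\]
For a general $X^0$, I would run the same argument with $X_*$ replaced by $X_*^0$ from \eqref{X}. Lemma~\ref{lemma3.2} applies verbatim, since it never uses the row selection, and it shows that $X^k-X_*^0$ stays in the relevant range space. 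This gives convergence in expectation to a solution.

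The key estimate is $\zeta_{k,\theta}\ge\varphi_{k,\theta}$. As in \eqref{eq:zetaA}, I write $\|R^k\|_F^2/\|A\|_F^2$ as the $\|A_{i,:}\|^2/\|A\|_F^2$-weighted average of the ratios $\|R^k_{i,:}\|^2/\|A_{i,:}\|^2$. I then split the indices into $[m]\setminus\Omega_k$ and $\Omega_k$, bounding each ratio by the maximum on the first set and by $\eps$ times the maximum on the second. This bounds $\max_i(\cdot)/(\|R^k\|_F^2/\|A\|_F^2)$ from below by the reciprocal in \eqref{varphi}. Multiplying by $\theta\ge0$ and adding $(1-\theta)$ gives $\zeta_{k,\theta}\ge\varphi_{k,\theta}$, and therefore the factor is at most $\delta_{k,\theta}$.

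The main obstacle is justifying the bound for $i\in\Omega_k$. Those indices lie below the unweighted mean, which equals $\eps$ times the maximum, so each such ratio is at most $\eps\max_i(\cdot)$. That is exactly the step I would verify carefully. Finally, $\varphi_{k,\theta}\ge\|A\|_F^{-2}$ together with \eqref{delta2} gives $\delta_{k,\theta}\le\delta$. The law of total expectation and induction then yield the product bound. Because $\delta_{k,\theta}$ depends on $R^k$, I would bound it uniformly by $\delta$ in the unconditional chain, as the paper implicitly does.
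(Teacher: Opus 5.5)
Your proof is the paper's proof. The paper omits it as ``analogous to Theorem~\ref{theorem4.2}'', and your ingredients are exactly that analogy with $\tfrac12$ replaced by $\theta$: the quantity $\zeta_{k,\theta}$, the conditional step from \eqref{eq3.7}, the range argument with \eqref{sigmamin}, and the estimate $\zeta_{k,\theta}\ge\varphi_{k,\theta}$. Your justification of that estimate is correct, including the observation that each ratio indexed by $\Omega_k$ lies below the unweighted mean, i.e.\ below $\eps$ times the maximum.

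Two of your refinements go beyond the paper and are worth keeping.

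First, arbitrary $X^0$. The proof of Theorem~\ref{theorem4.2} never actually shows convergence to a solution in this case, because its range step uses $\text{vec}(X^0-X_*)\in\text{range}(B\otimes A^T)$, which needs the range conditions. Your detour through $X_*^0$ from \eqref{X} fills this in. The matrix $X_*^0$ solves \eqref{eq1.1}, and \eqref{eq3.7} holds with any solution in place of $X_*$. Lemma~\ref{lemma3.2}, whose proof ignores the index rule, gives $X^k-X_*^0=A^+A\,(X^k-X_*)\,BB^+$. Hence $\E\big[\|X^k-X_*^0\|_F^2\big]\le\delta^k\,\|X^0-X_*^0\|_F^2\to0$.

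Second, the random factor. You are right that $\delta_{k,\theta}$ depends on $R^k$, hence on $i_0,\dots,i_{k-1}$, so it cannot be pulled out of the unconditional expectation. However, contrary to your remark, the paper does not bound it by $\delta$ there. Equation \eqref{eq4.12} treats $\delta_{k,1/2}$ as a constant, which is precisely the illegitimate step. So state explicitly what your argument proves:
\begin{itemize}
\item the conditional bound with $\delta_{k,\theta}$;
\item the unconditional bound with $\delta^{k+1}$.
\end{itemize}
It does not prove the middle inequality with $\prod_{\ell=0}^k\delta_{\ell,\theta}$ as literally stated. That inequality is justified only after replacing each $\delta_{\ell,\theta}$ by a deterministic upper bound, such as its supremum over sample paths. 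This is a defect of the statement, not of your proof.
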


\noindent
We will compare the upper bounds of the convergence factors for the algorithms discussed in this section and the RBK algorithm proposed in \cite{ref12} in Section~\ref{section4.3}.

\subsection{The MWRBK method}\label{section4.3}

We now generalize the maximal weighted residual strategy in~\cite{ref25,Mc} (which is for linear systems \eqref{eq1.4}) to matrix equation \eqref{eq1.1} (cf.~also Table~\ref{methods}).
We directly select the row with the largest weighted residual for each iteration, which can also be obtained from the RGRBK method by taking $\theta=1$.
This gives the following MWRBK method (Algorithm~\ref{ME-MWRBK}), which is a {\em deterministic} approach.

\begin{algorithm}\small
\caption{~The MWRBK method\label{ME-MWRBK}}
{\begin{algorithmic}[1]
\State {\bf Input}: initial guess $X^0$
\State {\bf Output:} approximate solution to matrix equation~\eqref{eq1.1}
\State {\bf for} $k=0$, $1$, $2$, $\dots$ until convergence {\bf do}
\State ~~~~Compute $R^0=C-AX^0B$
\State ~~~~Let $i_k$ be the index maximizing $\|R_{i,:}^k\|^2
\,/\,\|A_{i,:}\|^2$; if there are more, choose the smallest
\State ~~~~Update $X^{k+1}=X^k+\tfrac{\alpha}{\|A_{i_k,:}\|^2}\,A_{i_k,:}^T\,(R^k_{i_k,:}\,B^T)$
\State ~~~~Compute $R^{k+1}=R^k-\tfrac{\alpha}{\|A_{i_k,:}\|^2}\,(AA_{i_k,:}^T)\,((R^k_{i_k,:}\,B^T) \, B)$
\State {\bf end for}
\end{algorithmic}}
\end{algorithm}

A theoretical estimate of the convergence rate of Algorithm~\ref{ME-MWRBK} is given below, which is along the same lines as in \cite[Thm.~3.1]{ref25}.

\begin{theorem}\label{theorem4.10} \rm
Let matrix equation \eqref{eq1.1} with $A\in \R^{m\times p}$, $B\in \R^{q\times n}$ and $C\in \R^{m\times n}$
be consistent.
Let $0<\alpha<\tfrac{2}{\|B\|^2}$ and $X^k$ denote the $k$th iteration obtained by Algorithm~\ref{ME-MWRBK} with $\text{range}(X^0)\subseteq \text{range}(A^T)$ and $\text{range}((X^0)^T)\subseteq \text{range}(B)$. Then
\begin{align}\label{eq4.16}
\|X^{k+1}-X_{\ast}\|_F^2
\le \displaystyle\prod_{\ell=0}^k\,\delta_{\ell,\,1} \cdot
\|X^0-X_{\ast}\|_F^2 \le \delta^{k+1} \cdot \|X^0-X_{\ast}\|_F^2,
\end{align}
where $\delta_{\ell,\,1}$ is defined in \eqref{delta1}.
\end{theorem}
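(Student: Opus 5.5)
The proof will follow the deterministic analogue of the argument for Theorem~\ref{theorem4.2}. The starting point is the one-step inequality \eqref{eq3.7}, which holds for any choice of row index $i_k$. Here $i_k$ is the row maximizing $\|R^k_{i,:}\|^2/\|A_{i,:}\|^2$ with $R^k = C - AX^kB$, and \eqref{eq3.7} gives
\begin{align*}
\|X^{k+1}-X_{\ast}\|_F^2 \le \|X^k-X_{\ast}\|_F^2-(2\alpha-\alpha^2\|B\|^2)\,\max_{1\le i\le m}\tfrac{\|R^k_{i,:}\|^2}{\|A_{i,:}\|^2}.
\end{align*}
Since $0<\alpha<2\|B\|^{-2}$, the factor $2\alpha-\alpha^2\|B\|^2$ is positive. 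It therefore suffices to bound the maximum from below by $\varphi_{k,1}\,\|R^k\|_F^2$.

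\textbf{Step 1: bound the maximum.} I will reuse the splitting into $\Omega_k$ and $[m]\backslash\Omega_k$ from \eqref{eq:zetaA}. Write $\|R^k\|_F^2=\sum_i \|A_{i,:}\|^2\,\tfrac{\|R^k_{i,:}\|^2}{\|A_{i,:}\|^2}$. For $i\notin\Omega_k$, bound each ratio by the maximum $M_k$. For $i\in\Omega_k$, bound each ratio by the average, which equals $\eps\,M_k$. This yields
\begin{align*}
\|R^k\|_F^2\le M_k\Big(\sum_{i\in[m]\backslash\Omega_k}\|A_{i,:}\|^2+\eps\sum_{i\in\Omega_k}\|A_{i,:}\|^2\Big),
\end{align*}
which says exactly $M_k\ge\varphi_{k,1}\|R^k\|_F^2$ by \eqref{varphi} with $\theta=1$. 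This is the $\theta=1$ analogue of $\zeta_k\ge\varphi_{k,1/2}$. If all ratios happen to be equal, so that $\Omega_k=\emptyset$, the bound still holds trivially with the convention $\varphi_{k,1}=\|A\|_F^{-2}$.

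\textbf{Step 2: convert residual to error.} The update is a rank-one matrix $A_{i_k,:}^T(\cdot)B^T$, and $X^0$ satisfies the range conditions. By the same induction as in the proof of Theorem~\ref{theorem4.2}, $\text{vec}(X^k-X_\ast)\in\text{range}(B\otimes A^T)$ for all $k$. Then \eqref{sigmamin} gives
\begin{align*}
\|R^k\|_F^2=\|A(X^k-X_\ast)B\|_F^2\ge\sigma_{\min}^2(A)\,\sigma_{\min}^2(B)\,\|X^k-X_\ast\|_F^2.
\end{align*}
Combining the two steps gives $\|X^{k+1}-X_\ast\|_F^2\le\delta_{k,1}\|X^k-X_\ast\|_F^2$ with $\delta_{k,1}$ as in \eqref{delta1}. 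Since the method is deterministic, no expectation is needed, and induction on $k$ yields the product bound. The final inequality comes from $\delta_{\ell,1}\le\delta$, which follows from $\varphi_{\ell,1}\ge\|A\|_F^{-2}$ as in \eqref{delta2}.

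\textbf{Expected obstacle.} The only delicate point is the range invariance in Step~2, which is what makes the $\sigma_{\min}$ bound legitimate. This is already established in the earlier proof, so the remaining work is routine.
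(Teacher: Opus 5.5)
Your proposal is correct and follows essentially the same route as the paper. You start from \eqref{eq3.7} at the maximizing row, bound $\|R^k\|_F^2$ by the maximal ratio times $\sum_{i\in[m]\backslash\Omega_k}\|A_{i,:}\|^2+\eps\sum_{i\in\Omega_k}\|A_{i,:}\|^2$, then convert residual to error via range invariance and \eqref{sigmamin}, and induct; your explicit treatment of the range invariance and of the degenerate case $\Omega_k=\emptyset$ only makes precise what the paper leaves implicit.
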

\noindent {\bf Proof.}
The method for $\theta=1$ is deterministic, and therefore the proof technique is different from Theorems~\ref{theorem4.2} and \ref{theorem4.6}.
For $0<\alpha<\tfrac{2}{\|B\|^2}$, using that $i_k$ maximizes the quantity in line~5 of Algorithm~\ref{ME-MWRBK}, we have
\begin{align*}
&\|X^{k+1}-X_{\ast}\|_F^2\\
& \le \|X^k- X_{\ast}\|_F^2 -\tfrac{2\alpha-\alpha^2\,\|B\|^2}{\|A_{i_k,:}\|^2}
\,\|R_{i_k,:}^k\|^2 \\
&=\|X^k- X_{\ast}\|_F^2 -(2\alpha-\alpha^2\,\|B\|^2)\,
\tfrac{\|R_{i_k,:}^k\|^2}
{\|A_{i_k,:}\|^2}
\tfrac{\|R^k\|_F^2}
{\sum_{i\in[m]\backslash\Omega_k}\frac{\|R_{i,:}^k\|^2}{\|A_{i,:}\|^2}\,\|A_{i,:}\|^2
+\sum_{i\in\Omega_k}\frac{\|R_{i,:}^k\|^2}{\|A_{i,:}\|^2}\,\|A_{i,:}\|^2} \\
& \le \|X^k- X_{\ast}\|_F^2 -(2\alpha-\alpha^2\,\|B\|^2)\,
\tfrac{\|R_{i_k,:}^k\|^2}
{\|A_{i_k,:}\|^2}
\tfrac{\|R^k\|_F^2}
{\frac{\|R_{i_k,:}^k\|^2}{\|A_{i_k,:}\|^2}\,\big(\sum_{i\in[m]\backslash\Omega_k}\|A_{i,:}\|^2
+\sum_{i\in\Omega_k}\eps\,\|A_{i,:}\|^2
\big)}  \\
&= \|X^k- X_{\ast}\|_F^2 -(2\alpha-\alpha^2\,\|B\|^2)\,
\tfrac{\|A \, (X^k- X_{\ast}) \, B\|_F^2}
{\sum_{i\in[m]\backslash\Omega_k}\|A_{i,:}\|^2+
\eps \, \sum_{i\in\Omega_k}\|A_{i,:}\|^2} \\
& \le \big(1-(2\alpha-\alpha^2\,\|B\|^2) \cdot
\varphi_{k,\,1}\cdot\sigma_{\min}^2(A)\,\sigma_{\min}^2(B)\big) \cdot
\|X^k-X_{\ast}\|_F^2.
\end{align*}
In the last line, we have used \eqref{sigmamin}.
Induction on the iteration number gives inequality~(\ref{eq4.16}).
~~~~~~\fbox {}\\

Similar to the discussions in Sections~\ref{section3.1} and \ref{section4.1}, the upper bound of the convergence factor for the RGRBK and MWRBK methods is minimized when $\alpha=\|B\|^{-2}$.

Table~\ref{factors} provides a summary of the upper bounds of the convergence factors of the methods involved in this section for solving matrix equation~\eqref{eq1.1}.
Recall that $\delta$ is equal to $\delta_{k,0}$.

\begin{table}[htb!]
\caption{The upper bound on the convergence factors of RBK~\cite{ref12}, GRBK, RGRBK, and MWRBK methods for solving $AXB=C$.}
\label{factors}
\setlength{\tabcolsep}{1.1mm}{
\begin{tabular}{lllllll}
\hline\rule{0pt}{2.3ex}%
Method& \multicolumn{2}{l}{Convergence factor upper bound} && Result \\
\hline\rule{0pt}{3.8ex}%
RBK~\cite{ref12}& $\delta = \delta_{k,\,0}$ & \hspace{-2mm} $=1-\frac{2\alpha-\alpha^2\,\|B\|^2}
{\|A\|_F^2}\,\sigma_{\min}^2(A)\,\sigma_{\min}^2(B)$ &\hspace{-2mm} & \cite[Thm.~1]{ref12}  \\ \rule{0pt}{3.8ex}%
GRBK (Alg.~\ref{ME-GRBK})&$\delta_{k,\,1/2}$ & \hspace{-2mm} $=1-(2\alpha-\alpha^2\,\|B\|^2) \cdot \varphi_{k,\,1/2} \cdot
\sigma_{\min}^2(A) \, \sigma_{\min}^2(B)$ & \hspace{-2mm} $< \delta$ &Thm.~\ref{theorem4.2}\\ \rule{0pt}{3.8ex}%
RGRBK (Alg.~\ref{ME-RGRBK})& $\delta_{k,\,\theta}$ & \hspace{-2mm} $=1-(2\alpha-\alpha^2\,\|B\|^2) \cdot \varphi_{k,\,\theta} \cdot
\sigma_{\min}^2(A) \, \sigma_{\min}^2(B)$ & \hspace{-2mm} $< \delta$ &Thm.~\ref{theorem4.6}\\ \rule{0pt}{3.8ex}%
MWRBK (Alg.~\ref{ME-MWRBK})&$\delta_{k,\, 1}$ & \hspace{-2mm} $=1-(2\alpha-\alpha^2\,\|B\|^2) \cdot \varphi_{k,\,1} \cdot \sigma_{\min}^2(A)\,\sigma_{\min}^2(B)$ & \hspace{-2mm} $< \delta$ &Thm.~\ref{theorem4.10}
\\[0.8ex]
\hline
    \end{tabular}}
\end{table}

With respect to these upper bounds, we have already seen that the upper bounds of the convergence factors for GRBK, RGRBK, and MWRBK are not larger than the $\delta$ for RBK.
For linear systems \eqref{eq1.4}, the authors of \cite[Sec.~2]{ref20} show that the upper bound of the convergence factor of RGRK is a monotonically decreasing function with respect to the parameter $\theta$.
We generalize this result for \eqref{eq1.1} by establishing that the upper bound of the convergence factor for the proposed RGRBK method also exhibits a decreasing behavior with respect to $\theta$ (under the very mild assumption of not having equal residual rows as mentioned before):
\[
\frac{\partial\delta_{k,\,\theta}}{\partial\theta}
=(2\alpha-\alpha^2\,\|B\|^2) \, \sigma_{\min}^2(A)\,\sigma_{\min}^2(B)
\cdot \big(\tfrac{1}{\|A\|_F^2}-\tfrac{1}
{\sum_{i\in[m]\backslash\Omega_k}
{\|A_{i,:}\|^2}
+ \eps \, \sum_{i\in\Omega_k}
{\|A_{i,:}\|^2}} \big) < 0.
\]
This means that the upper bound $\delta_{k,\,\theta}$ on the convergence factor is decreasing as a function of $\theta$.
Therefore, we expect that the (deterministic) MWRBK method (corresponding to $\theta=1$) may converge fastest; this is indeed confirmed by experiments in the following sections.

Table~\ref{index} provides an overview of the index selection criteria of the methods involved in this section for solving equations \eqref{eq1.1} and \eqref{eq1.4}.
The RGRBK method has a parameter $\theta \in [0,1]$; it becomes the GRBK method for $\theta=\tfrac12$ and the MWRBK method for $\theta=1$.
Fig.~\ref{deltafig} gives a schematic graph of $\delta_{k,\,\theta}$ as a function of the parameter $\theta$, together with the relations between the RGRBK, GRBK, and MWRBK methods.

\begin{table}[htb!]
\caption{Index selection criteria of the methods involved in this section for solving $Ax=b$ and $AXB=C$.}
\label{index}
\setlength{\tabcolsep}{2.8mm}{
\begin{tabular}{lllll}
\hline\rule{0pt}{3.8ex}%
$Ax=b$ & GRK~\cite{ref19} &  $
\tfrac{(r_{i_k,:}^k)^2}
{\|A_{i_k,:}\|^2}\ge
\tfrac12\max\limits_{i}\tfrac{(r_{i,:}^k)^2}
{\|A_{i,:}\|^2}
+\tfrac12\tfrac{\|r^k\|_F^2}{\|A\|_F^2}
$ \\[1.5mm]
&RGRK~\cite{ref20}& $
\tfrac{(r_{i_k,:}^k)^2}
{\|A_{i_k,:}\|^2}\ge
\theta\,\max\limits_{i}\tfrac{(r_{i,:}^k)^2}
{\|A_{i,:}\|^2}
+(1-\theta)\tfrac{\|r^k\|_F^2}{\|A\|_F^2}
$\\[1.5mm]
&MWRK~\cite{ref25,Mc}& $
\tfrac{(r_{i_k,:}^k)^2}
{\|A_{i_k,:}\|^2}=
\max\limits_{i}\tfrac{(r_{i,:}^k)^2}
{\|A_{i,:}\|^2}
$\\[3.5mm]
$AXB=C$
& GRBK (Alg.~\ref{ME-GRBK})& $
\tfrac{\|R_{i_k,:}^k\|^2}
{\|A_{i_k,:}\|^2}\ge
\tfrac12\max\limits_{i}\tfrac{\|R_{i,:}^k\|^2}
{\|A_{i,:}\|^2}
+\tfrac12\tfrac{\|R^k\|_F^2}{\|A\|_F^2}
$ \\[1.5mm]
& RGRBK (Alg.~\ref{ME-RGRBK})& $
\tfrac{\|R_{i_k,:}^k\|^2}
{\|A_{i_k,:}\|^2}\ge
\theta\,\max\limits_{i}\tfrac{\|R_{i,:}^k\|^2}
{\|A_{i,:}\|^2}
+(1-\theta)\tfrac{\|R^k\|_F^2}{\|A\|_F^2}
$  \\[1.5mm]
& MWRBK (Alg.~\ref{ME-MWRBK})& $
\tfrac{\|R_{i_k,:}^k\|^2}
{\|A_{i_k,:}\|^2}=
\max\limits_{i}\tfrac{\|R_{i,:}^k\|^2}
{\|A_{i,:}\|^2}
$ \\
\hline
\end{tabular}}
\end{table}

\begin{figure}[htb!]
\renewcommand{\figurename}{Fig.}
\centering
\includegraphics[scale=0.4]{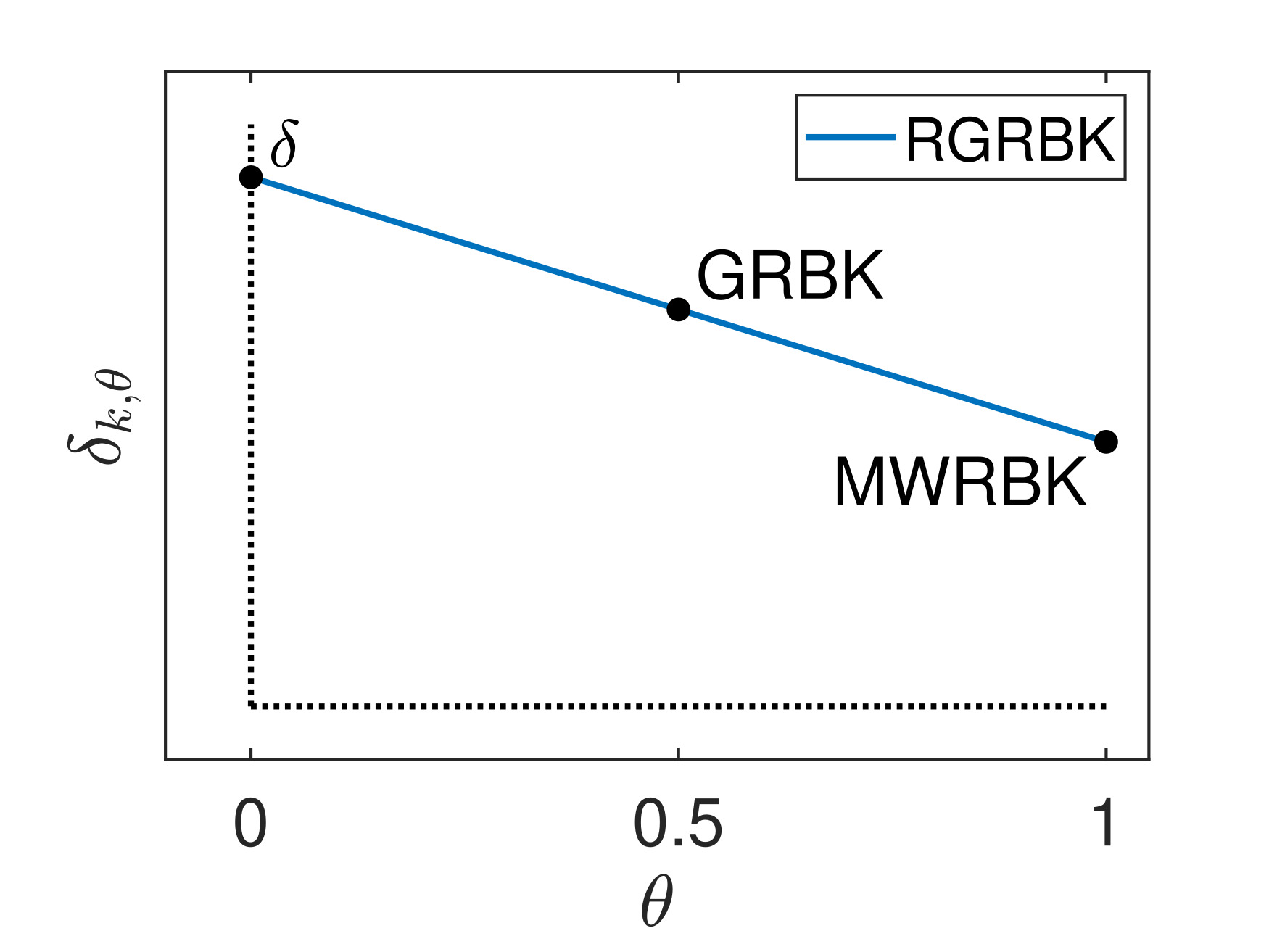}
\caption{\label{deltafig}{Schematic graph of $\delta_{k,\,\theta}$ with respect to $\theta$, and relations between RGRBK, GRBK, and MWRBK methods.}}
\end{figure}

In Table~\ref{complexity}, we summarize the per-iteration computational complexity of various methods for solving matrix equation~\eqref{eq1.1}. All algorithms, except for the GI~\cite{ref8}, are row-action methods that involve rank-one updates; cf.~also Table~\ref{problems}.

\begin{table}[htb!]
\caption{The per-iteration computational complexity of various methods for solving $AXB=C$.}
\label{complexity}
\setlength{\tabcolsep}{1.5mm}{
\begin{tabular}{lllllll}
\hline\rule{0pt}{2.3ex}%
Method & Complexity \\
\hline\rule{0pt}{2.8ex}%
BK (Alg.~\ref{ME-BK}), RBK (Alg.~\ref{ME-RBK} \cite{ref12}) & $\calo(q(p+n))$ \\ \rule{0pt}{2.8ex}%
BK (Alg.~\ref{ME-BK1}) &$\calo(q(p+n))$\\ \rule{0pt}{2.8ex}%
BK (Alg.~\ref{alg.AX=C}) &$\calo(pq)$\\ \rule{0pt}{2.8ex}%
GRBK, RGRBK, MWRBK (Algs.~\ref{ME-GRBK},~\ref{ME-RGRBK},~\ref{ME-MWRBK}) &$\calo(pq+qn+mp+mn)$\\ \rule{0pt}{2.8ex}%
RGRK~\cite{ref11}, MWRK~\cite{ref11} &$\calo(pq+qn+mp+mn)$\\ \rule{0pt}{2.8ex}%
GI~\cite{ref8} & $\min(\calo(mq(p+n)), \, \calo(pn(m+q)))$
\\[0.8ex]
\hline
    \end{tabular}}
\end{table}

\section{Numerical examples}\label{section5}

This section provides some numerical examples to validate the main theoretical findings.
All experiments are tested using MATLAB R2021b on a laptop with
Intel(R) Core(TM) i5-12500H CPU @ 2.50GHz.
We examine the methods from the aspects of the number of iterations (denoted by `IT'), elapsed CPU time in seconds (denoted by `CPU'), and relative solution error (denoted by `RSE').
In each experiment, the randomized methods (GRBK, RGRBK, and MWRBK) are executed for 20 independent trials. For randomized methods, the values of the IT and CPU are the averages of 20 trials.
We also show the standard deviation (denoted by `SD') and the range (denoted by `Range') of the CPUs derived from 20 independent experiments. Based on the discussions in Sections~\ref{section3} and \ref{section4}, we take the parameter $\alpha=\|B\|^{-2}$. In addition, as discussed in Section~\ref{section4.3}, the upper bound on the convergence factor of the RGRBK method is smaller than that of the GRBK method for $\theta\in (\tfrac12,1)$; therefore, we focus on this case.

The initial guess $X^0$ is the zero matrix for all experiments.
This is a common choice in Kaczmarz type methods, as it tends to give good approximation results.
Also, this satisfies the requirements of the various theorems (namely $\text{range}(X_0) \subseteq \text{range}(A^T)$ and $\text{range}(X_0^T) \subseteq \text{range}(B)$), so that all methods converge to the minimum norm solution $X_* = A^+ C B^+$.
The coefficient matrices $A$ and $B$ selected as test matrices include sparse matrices from the University of Florida sparse matrix collection \cite{ref33} and dense matrices randomly generated by the function $\sf randn$. These matrices have also been used in \cite{ref19, ref20, ref25, ref12, Du}.
The matrix $X$ is produced by $\sf randn(p,q)$.
To create a consistent system, the matrix $C$ is generated by $C=AXB$.
We present the size, rank, and density of the coefficient matrices, where the density is defined as
\begin{align*}
\text{density}:=
\text{number of nonzeros of an}\ m \times n\ \text{matrix} \, / \, (m\,n).
\end{align*}
We now briefly discuss possible stopping criteria.
A practical rule for all methods may be based on the norm of the rank-one update (cf.~Table~\ref{complexity}):
\[
\|X^{k+1} - X^k\| = \alpha \ \|R_{i_k,:}^k\,B^T\| \, / \, \|A_{i_k,:}\|.
\]
Another natural stopping criterion is to use $\|R^k\|$, especially in Algorithms~\ref{ME-GRBK}--\ref{ME-MWRBK}, where the residual $R^k$ is already available.
However, rather than comparing these rules in details, we focus on the behavior of the methods considering the solution error:
\begin{align*}
\text{RSE} := \|X^k-X_{\ast}\|_F\,/\,\|X_{\ast}\|_F.
\end{align*}

\begin{example}\label{example5.1} \rm
In this example, we analyze the convergence behavior of the three BK methods, i.e., Algorithms~\ref{ME-BK}, \ref{ME-BK1}, and \ref{alg.AX=C}.
We first investigate the convergence of Algorithm \ref{ME-BK}.
For a consistent matrix equation~\eqref{eq1.1}, the coefficient matrices can be categorized into nine different configurations. Specifically, when $A$ is row-full-rank, column-full-rank, or rank-deficient, $B$ may also be row-full-rank, column-full-rank, or rank-deficient.
Table~\ref{table1} lists nine corresponding sets of test matrices used to evaluate the performance of Algorithm~\ref{ME-BK}.
The convergence curves of Algorithm~\ref{ME-BK} are given in Fig.~\ref{fig1}.
Table~\ref{sizes5.1} summarizes the dimensions of the test matrices and reports the results of the IT and CPU.
These results illustrate that Algorithm~\ref{ME-BK} seems feasible for various problem sizes, including overdetermined and underdetermined systems, typically with a linear asymptotic convergence.
The time per iteration seems to have a natural relation with the sparsity of the coefficient matrices. For instance, the matrices in Sets 1 and 3 are relatively dense and have the longest per-iteration computation times.

\begin{table}[htb!]
\footnotesize
\caption{The test matrices used in Example~\ref{example5.1}, where various scenarios are tested: $m>p$ and $m<p$; $p>q$ and $p<q$; $q>n$ and $q<n$.}
\label{table1}
\setlength{\tabcolsep}{1.5mm}{
\begin{tabular}{llllllllll}
\hline\rule{0pt}{2.8ex}%
\multirow{2}{*}{Matrices}&
  & \multicolumn{2}{l}{Set 1 }& & \multicolumn{2}{l}{Set 2 }& & \multicolumn{2}{l}{Set 3}\\
\cmidrule{3-4}\cmidrule{6-7}\cmidrule{9-10}\rule{0pt}{2.3ex}%
& &$A$&$B$& &$A$ & $B$& &$A$ & $B$\\
\hline\rule{0pt}{2.8ex}%
Name& & \makecell[c]{\sf bibd$\_$11$\_$5}&{\sf bibd$\_$12$\_$4}& &{\sf bibd$\_$12$\_$4}
&{\sf ash219}&
&{\sf bibd$\_$11$\_$5} &{\sf n3c6-b2}\\
Size& &$55\times 462$&$66\times 495$& &$66\times 495$&$219\times 85$& &$55\times 462$&$455\times 105$\\
Rank& &55&66& &66&85& &55&91\\
Density& &$18.18\%$&$9.09\%$& &$9.09\%$&$2.35\%$& &$18.18\%$&$2.86\%$\\
\hline\rule{0pt}{2.8ex}%
 &
  & \multicolumn{2}{l}{Set 4}& & \multicolumn{2}{l}{Set 5}& & \multicolumn{2}{l}{Set 6}\\
\cmidrule{3-4}\cmidrule{6-7}\cmidrule{9-10}\rule{0pt}{1.3ex}%
Name& &{\sf ash219}&{\sf bibd$\_$12$\_$4}&
&{\sf ash219}&{\sf ash331}&
&{\sf ash219}&{\sf n3c6-b1}\\
Size& &$219\times 85$&$66\times 495$&
&$219\times 85$&$331\times 104$&
&$219\times 85$&$105\times 105$\\
Rank& &85&66& &85&104& &85&14\\
Density& &$2.35\%$&$9.09\%$& &$2.35\%$&$1.92\%$& &$2.35\%$&$1.90\%$\\
\hline\rule{0pt}{2.8ex}%
 &
  & \multicolumn{2}{l}{Set 7}& & \multicolumn{2}{l}{Set 8}& & \multicolumn{2}{l}{Set 9}\\
\cmidrule{3-4}\cmidrule{6-7}\cmidrule{9-10}\rule{0pt}{1.0ex}%
Name& &{\sf n3c6-b1}&{\sf lp$\_$grow7}&
&{\sf n3c6-b2}&{\sf ash219}&
&{\sf n3c6-b2}&{\sf cis-n4c6-b1}\\
Size& &$105\times 105$&$140\times 301$&
&$455\times 105$&$219\times 85$&
&$455\times 105$&$210\times 21$\\
Rank& &14&140& &91&85& &91&20\\
Density& &$1.90\%$&$6.20\%$& &$2.86\%$&$2.35\%$& &$2.86\%$&$9.52\%$\\
\hline
    \end{tabular}}
\end{table}

\begin{figure}[htb!]
\renewcommand{\figurename}{Fig.}
\centering
\includegraphics[scale=0.5]{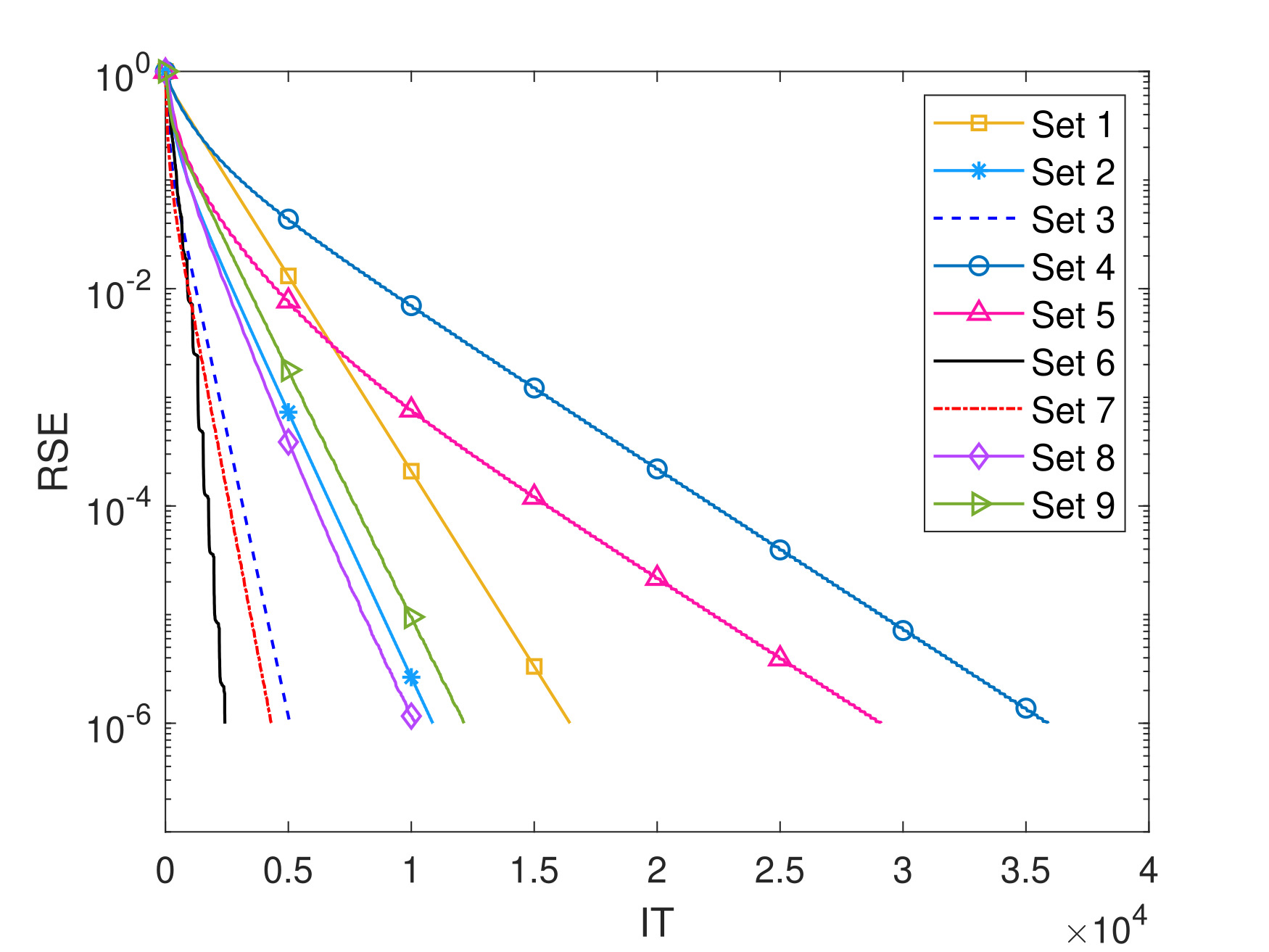}
\caption{Convergence curves of Algorithm~\ref{ME-BK} showing RSE versus IT for the test matrices listed in Table~\ref{table1}.}
\label{fig1}
\end{figure}

\begin{table}[htb!]
\footnotesize
\caption{The results for IT, CPU, and CPU/IT corresponding to Fig.~\ref{fig1}.}
\label{sizes5.1}
\setlength{\tabcolsep}{1.5mm}{
    \begin{tabular}{llllllllll}
\hline\rule{0pt}{2.8ex}%
Set &1&2&3&4&5\\ \hline\rule{0pt}{2.8ex}%
IT&16444&10855&5029&35893&27139\\
CPU&15.98&2.46&5.30&4.41&2.15\\
CPU/IT &$9.7\cdot10^{-4}$&$2.3\cdot10^{-4}$&$1.1\cdot10^{-3}$&$1.2\cdot10^{-4}$&$7.9\cdot10^{-5}$\\ \hline\rule{0pt}{2.8ex}%
Set &6&7&8&9\\ \cmidrule{1-5}
IT&2427&4162&10363&12384\\
CPU&0.08&0.53&0.60&0.66\\
CPU/IT &$3.4\cdot10^{-5}$&$1.3\cdot10^{-4}$&$5.8\cdot10^{-5}$&$5.3\cdot10^{-5}$\\
\cmidrule{1-5}
\end{tabular}}
\end{table}

We next investigate the convergence behavior of Algorithms~\ref{ME-BK1} and \ref{alg.AX=C}. We select Set 5 (full column rank of $B$ for Algorithm~\ref{ME-BK1}) and Set 1 (full row rank of $B$ for Algorithm~\ref{alg.AX=C}) from Table~\ref{table1}.
Fig.~\ref{BK_fullcolumnrow} displays the spectral radius as function of $\alpha$, the convergence plots (RSE versus IT) for various $\alpha$, and the plots of $\rho^k$ and RSE versus IT.
Table~\ref{algs.34} presents the IT and CPU corresponding to the middle row of Fig.~\ref{BK_fullcolumnrow}.
Note that in Fig.~\ref{BK_fullcolumnrow} and Table~\ref{algs.34}, ``IT" denotes the number of sweeps, whereas in all other tables and figures, ``IT" refers to the number of iteration steps.
These results confirm that the methods perform better with smaller $\rho$.
From their spectral radius curves, we observe that $\rho<1$ when $\alpha\in(0,2)$, which is consistent with Theorems~\ref{theorem3.7} and \ref{BK_full_row}.

\begin{figure}[htb!]
\renewcommand{\figurename}{Fig.}
\centering
\includegraphics[scale=1]{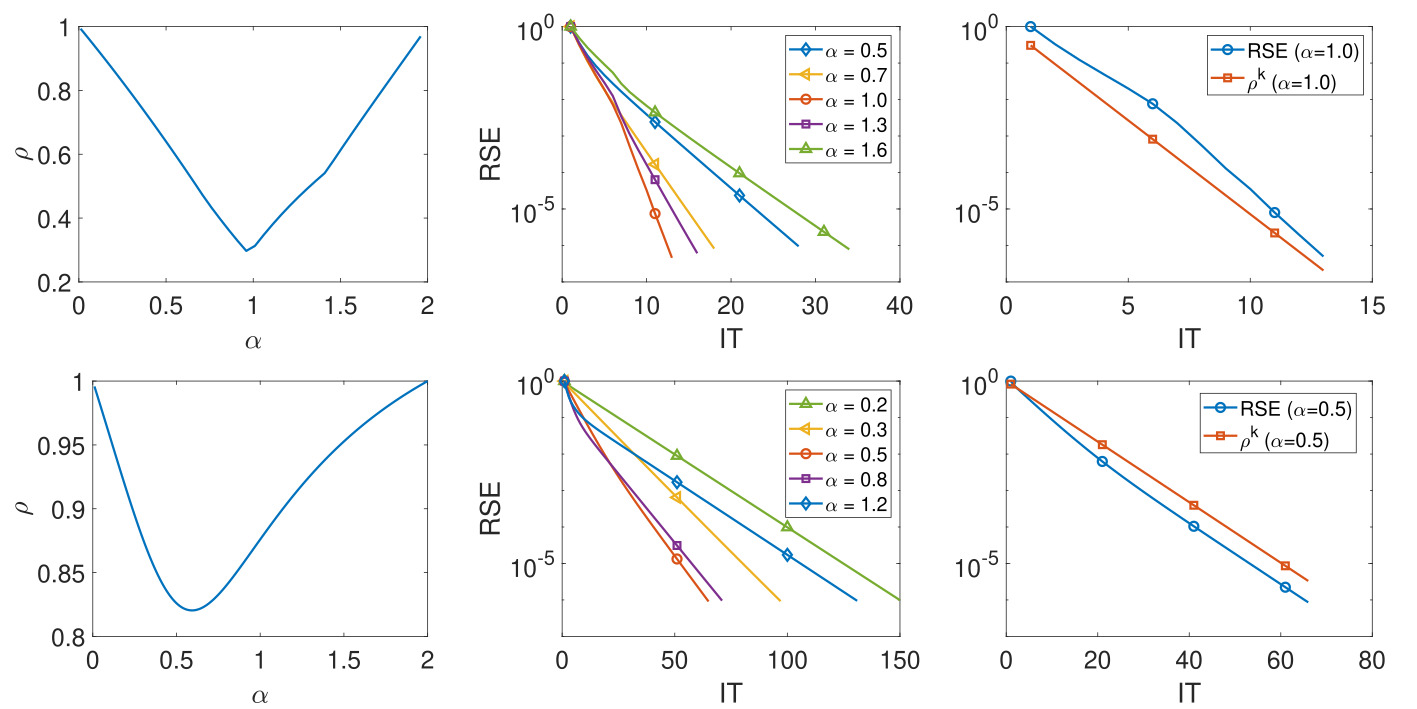}
\caption{For Set~5 (top row) and Set~1 (bottom row): the spectral radius, the convergence (RSE versus IT) for different $\alpha$, and the plots of $\rho^k$ and RSE versus IT for Algorithms \ref{ME-BK1} (top row) and \ref{alg.AX=C} (bottom row).
Note that the {\em upper bound} on $\rho$ is minimized for $\alpha=1$ in these cases, but the actual minimizer $\alpha$ might differ from this value. Only in this figure, IT means the number of sweeps (rather than row-action steps).
}
\label{BK_fullcolumnrow}
\end{figure}

\begin{table}[h]
\footnotesize
\caption{The results for IT and CPU in the middle column of Fig.~\ref{BK_fullcolumnrow}.}
\label{algs.34}
\setlength{\tabcolsep}{1.5mm}{
\begin{tabular}{ccccccc}
\hline \noalign{\vskip 3pt}
\multicolumn{3}{c}{Alg.~\ref{ME-BK1}}& &\multicolumn{3}{c}{Alg.~\ref{alg.AX=C}}\\
\cmidrule{1-3}\cmidrule{5-7}\rule{0pt}{1.8ex}%
$\alpha$&IT&CPU&&$\alpha$&IT&CPU\\\hline \rule{0pt}{2.5ex}%
0.5&27&0.38&&0.2&150&0.50\\
0.7&17&0.23&&0.3&\ph196&0.30\\
1.0&12&0.16&&0.5&\ph165&0.21\\
1.3&15&0.20&&0.8&\ph171&0.23\\
1.6&33&0.43&&1.2&131&0.41\\
\hline
\end{tabular}}
\end{table}
\end{example}

\begin{example}\label{example5.3} \rm
In this example, we compare the convergence behavior of the RBK~\cite{ref12}, GRBK, RGRBK, and MWRBK methods.
Three configurations of the coefficient matrices $A$ and $B$ are tested: (1) full column rank $A$ and full row rank $B$ (Sets 1 and 2 in Table~\ref{table5.31}), (2) full row rank $A$ and full column rank $B$ (Sets 3 and 4 in Table~\ref{table5.31}), and (3) rank-deficient $A$ and $B$ (Sets 5 and 6 in Table~\ref{table5.31}).

\begin{table}[h]
\footnotesize
\caption{The coefficient matrices used in Example~\ref{example5.3}.}
\label{table5.31}
\setlength{\tabcolsep}{1.5mm}{
\begin{tabular}{llllllllll}
\hline\rule{0pt}{2.8ex}%
\multirow{2}{*}{Matrices}&
  & \multicolumn{2}{l}{Set 1}& & \multicolumn{2}{l}{Set 2}\\
\cmidrule{3-4}\cmidrule{6-7}\rule{0pt}{2.3ex}%
& &$A$&$B$& &$A$ & $B$\\
\hline\rule{0pt}{2.8ex}%
Name& &{\sf ash608}& {\sf bibd$\_$15$\_$3} & &{\sf \makecell[l]{randn(455, 20)}}&{\sf \makecell[l]{randn(80, 320)}}\\
Size& &$608\times 188$&$105\times 455$& &$455\times 20$&$80\times 320$\\
Rank& &188&105& &20&80\\
Density& &$1.06\%$&$2.86\%$& &$100\%$&$100\%$\\
\hline\rule{0pt}{2.8ex}%
& & \multicolumn{2}{l}{Set 3}&& \multicolumn{2}{l}{Set 4}\\
\cmidrule{3-4}\cmidrule{6-7}\rule{0pt}{2ex}%
Name&&{\sf lp$\_$afiro}&{\sf ash219}& &{\sf \makecell[l]{randn(35, 60)}}&{\sf \makecell[l]{randn(80, 20)}}\\\rule{0pt}{2.3ex}%
Size& &$27\times 51$&$219\times 85$& &$35\times 60$&$80\times 20$\\
Rank& &27&85& &35&20\\
Density& &$7.41\%$&$2.35\%$& &$100\%$&$100\%$\\
\hline\rule{0pt}{2.8ex}%
& &  \multicolumn{2}{l}{Set 5}& &  \multicolumn{2}{l}{Set 6}\\
\cmidrule{3-4}\cmidrule{6-7}\rule{0pt}{2.5ex}%
Name& &{\sf flower$\_$4$\_$1}&{\sf n3c6-b2}& & {\sf \makecell[l]{A=randn(265, 25);\\ A=[A,A]}}&{\sf \makecell[l]{B=randn(10, 345);\\B=[B;B]}}\\\rule{0pt}{2.3ex}%
Size& &$121\times 129$&$455\times 105$& & $265\times 50$&$20\times 345$\\
Rank& &108&91& &25&10\\
Density& &$2.47\%$&$2.86\%$& &$100\%$&$100\%$\\
\hline
\end{tabular}}
\end{table}

Table~\ref{table4} provides the comparative results of the IT, CPU, SD, Range, and CPU/IT, using the test coefficient matrices in Table~\ref{table5.31}.
As shown in Table~\ref{table4}, although the RBK method has the shortest average time per iteration, GRBK, RGRBK, and MWRBK require fewer iterations and less total computation time.
In addition, the randomized algorithms have small SD, with GRBK and RGRBK generally showing slightly lower SD values than RBK;
this is natural since they select from fewer rows.

\begin{table}[htb!]
\footnotesize
\caption{Numerical results of the RBK, GRBK, RGRBK, and MWRBK methods for the test coefficient matrices in Table~\ref{table5.31} (with $m$, $p$, $q$, $n$ values).}
\label{table4}
\setlength{\tabcolsep}{0.99mm}{
\begin{tabular}{lcccccccccccc}
\hline\rule{0pt}{2.8ex}%
\multirow{2}{*}{Method}&
  & \multicolumn{5}{c}{Set 1 \ (608, 188, 105, 455) }& & \multicolumn{5}{c}{Set 2 \ (455, 20, 80, 320)}\\
\cmidrule{3-7}\cmidrule{9-13}\rule{0pt}{1.8ex}%
 & &IT&CPU&SD&Range&CPU/IT& &IT&CPU&SD&Range&CPU/IT\\
\hline\rule{0pt}{2.8ex}%
RBK& & 33891&29.9&2.17&[25.5, 33.3]&$8.8\cdot10^{-4}$& & 2589&1.50&0.05&$[1.38, 1.57]$&$5.8\cdot10^{-4}$\\
GRBK & & 12736&21.9&0.04&[21.9, 22.1]&$1.7\cdot10^{-3}$& & 1417&1.33&0.01&$[1.30, 1.35]$&$9.4\cdot10^{-4}$\\
RGRBK& & 12715&21.7&0.04&[21.6, 21.7]&$1.7\cdot10^{-3}$& & 1399&1.28&0.02&[1.27, 1.34]&$9.2\cdot10^{-4}$\\
MWRBK& & 12711&17.1&--&--&$1.3\cdot10^{-3}$& & 1398&1.18&--&--&$8.4\cdot10^{-4}$\\
\hline\rule{0pt}{2.8ex}%
& &  \multicolumn{5}{c}{Set 3 \ (27, 51, 219, 850)}& &  \multicolumn{5}{c}{Set 4 \ (35, 60, 80, 20)}\\
\cmidrule{3-7}\cmidrule{9-13}\rule{0pt}{1.8ex}%
RBK& & 30897&1.87&0.34&[1.62, 2.65]&$6.1\cdot10^{-5}$& & 32702&0.46&0.05&[0.40, 0.60]&$1.4\cdot10^{-5}$\\
GRBK& & 13229&0.89&0.02&[0.86, 0.96]&$6.7\cdot10^{-5}$& & 23519&0.46&0.02&[0.43, 0.49]&$2.0\cdot10^{-5}$\\
RGRBK& & 13219&0.88&0.02&[0.85, 0.93]&$6.6\cdot10^{-5}$& & 23512&0.46&0.03&[0.42, 0.53]&$2.0\cdot10^{-5}$\\
MWRBK& & 13213&0.79&--&--&$6.0\cdot10^{-5}$& & 23512&0.37&--&--&$1.6\cdot10^{-5}$\\
\hline\rule{0pt}{2.8ex}%
& &  \multicolumn{5}{c}{Set 5 \ (121, 129, 455, 105)}& &  \multicolumn{5}{c}{Set 6 \ (265, 50, 20, 345)}\\
\cmidrule{3-7}\cmidrule{9-13}\rule{0pt}{1.8ex}%
RBK& & 32859&8.03&0.14&[7.87, 8.42]&$2.4\cdot10^{-4}$& & 1043&0.16&0.01&[0.14, 0.18]&$1.5\cdot10^{-4}$\\
GRBK& & 11071&3.64&0.04&[3.58, 3.76]&$3.3\cdot10^{-4}$& &  \phantom{1}388&0.14&0.01&[0.12, 0.16]&$3.6\cdot10^{-4}$\\
RGRBK& & 11029&3.42&0.03&[3.37, 3.48]&$3.1\cdot10^{-4}$& &  \phantom{1}363&0.13&0.01&[0.13, 0.15]&$3.7\cdot10^{-4}$\\
MWRBK& & 11012&2.85&--&--&$2.6\cdot10^{-4}$& &  \phantom{1}355&0.11&--&--&$3.1\cdot10^{-4}$\\
\hline
\end{tabular}}
\end{table}
\end{example}

\section{An application to color image restoration}\label{section6}

Many imaging systems, such as magnetic resonance imaging, computerized tomography, etc., can be modeled by the system of linear equations~\eqref{eq1.4}~\cite{ref16,ref17}.
Similarly, matrix equation~\eqref{eq1.1} plays a significant role in certain color image restoration problems~\cite{ref5}.

We first give a description of the forward model for color image restoration; see also~\cite{ref5}.
The desired original color image $\calx$ (the observed color image $\mathcal{C}$) can be shown as a three-dimensional array of size $m\times n\times 3$, where the red (R), green (G) and blue (B) channels are correspondingly three matrices $X_{{\text r}}:=\calx(:,:,1)$, $X_{{\text g}}:=\calx(:,:,2)$, $X_{{\text b}}:=\calx(:,:,3)$
(and similarly, $C_{{\text r}}:=\mathcal{C}(:,:,1)$, $
C_{{\text g}}:=\mathcal{C}(:,:,2)$, $
C_{{\text b}}:=\mathcal{C}(:,:,3)$).
With $X=(\text{vec}(X_{{\text r}}),\text{vec}(X_{{\text g}}),\text{vec}(X_{{\text b}}))$ and $
C=(\text{vec}(
C_{{\text r}}),\text{vec}(
C_{{\text g}}),\text{vec}(
C_{{\text b}}))$, the forward model is expressed as
$C=AXA_{\rm c}^T+E$, where $A\in \R^{mn\times mn}$ is the within-channel blurring, $A_{\rm c}\in \R^{3\times 3}$ represents the cross-channel blurring and $E\in \R^{mn\times 3}$ is the additive noise. Ideally, if there is no noise in the generation of the blurred image, the forward model becomes~(\ref{eq1.3}).
In this paper, we only consider this noise-free case.

We take {\sf face} of size $92\times92\times3$, {\sf bird} of size $96\times96\times3$ and {\sf mandril} of size $125\times120\times3$
as test images, as shown in the top row of Fig.~\ref{test and blurred}, to investigate the performance of the following iterative methods: GI~\cite{ref8}, RBK~\cite{ref12}, and our proposed BK, GRBK, RGRBK, and MWRBK methods.
To obtain blurred images, the Matlab function {\sf fspecial} is used to generate a rotationally symmetric Gaussian lowpass filter of size $5$ with standard deviation $6$
to simulate the blurred images caused by the Gaussian filter.
The cross-channel blurring matrix $A_{\rm c}$ captures the relations among different color channels (i.e., RGB) that may arise due to the blurring. In typical systems, the dominant contribution to a channel originates from itself, implying $(A_{\rm c})_{i,i}>(A_{\rm c})_{i,j}$ for $i\ne j$. To ensure the preservation of total intensity, each row of $A_{\rm c}$ is constrained to sum to 1, i.e., $\sum_{j}(A_{\rm c})_{i,j}=1$.
In our examples, we choose the (rather typical) cross-channel blurring matrix as
\begin{align} \small \label{A_c}
A_{\rm c}=\left[\begin{array}{cccc}
0.90 &0.05 &0.05\\
0.00 &0.90 &0.10\\
 0.05 &0.10 &0.85
        \end {array}
\right].
\end{align}
The bottom row of Fig.~\ref{test and blurred} shows the blurred images, with their corresponding peak signal-to-noise ratio (PSNR) values 19.31, 17.52, and 18.86, respectively.

\begin{figure}[htb!]
\renewcommand{\figurename}{Fig.}
\center{\includegraphics[scale=1.6]{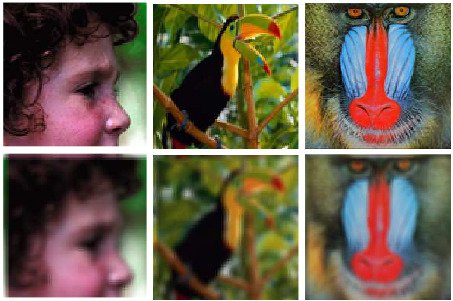}}
\caption{\label{test and blurred} Test images (top row) and blurred images (bottom row).}
\end{figure}

Table~\ref{sizes} provides an overview of the size of the experiments in this section.
Notice that for this application, $m=p$ (i.e., $A$ is square), and $q=n$ ($B$ is square), with $p \gg q$ ($X$ is tall and skinny).
We remark that in this specific experiment the matrix $B=A_c^T$ is a small $3\times 3$ nonsingular matrix, for which direct inversion is natural and offers a natural solution, as in Algorithm~\ref{alg.AX=C}.
We test this approach in Experiment~\ref{exp6.2}, but also stress that we have developed methods to solve (\ref{eq1.1}) for $A$ and $B$ of arbitrary size.

\begin{table}[htb!]
\footnotesize
\caption{Overview of the sizes of the experiments in Section~\ref{section6}, and of the degrees of freedom (dofs) in the solution $X$ (equal to $pq$).}
\label{sizes}
\setlength{\tabcolsep}{3mm}{
\begin{tabular}{lccc}
\hline\rule{0pt}{2.3ex}%
Experiment & $m = p$ & $q = n$ & dofs \\ \hline\rule{0pt}{2.8ex}%
{\sf face}    & \ph18464 & 3 & 25392 \\
{\sf bird}    & \ph19216 & 3 & 27648 \\
{\sf mandril} &    15000 & 3 & 45000 \\
\hline
    \end{tabular}}
\end{table}

\begin{experiment}\label{exp6.1} \rm
In this experiment, we compare the restored results of the GI~\cite{ref8}, BK (Alg.~\ref{ME-BK}), RBK~\cite{ref12} (Alg.~\ref{ME-RBK}), GRBK (Alg.~\ref{ME-GRBK}), RGRBK (Alg.~\ref{ME-RGRBK}), and MWRBK (Alg.~\ref{ME-MWRBK}) methods. All methods, except for GI, process one row per iteration. The GI method uses all rows in each iteration, which may be computationally more expensive, as also confirmed by this experiment.

In Fig.~\ref{PSNR_CPU}, we plot the PSNR versus the time spent using these methods. In this experiment, we run the randomized methods only once.
To achieve comparable PSNR values, the GI, BK, and RBK methods require more computation time than the GRBK, RGRBK, and MWRBK methods, with the GI method being the most time-consuming among all.
As expected from the discussion in Section~\ref{section4}, increasing $\theta$ from 0.5 (GRBK), via an intermediate value in the interval $(0.5, 1)$ (RGRBK), to 1 (MWRBK) has a favorable effect on the performance.
Furthermore, the randomized RBK is still faster than the deterministic BK, which means that taking rows with larger norm pays off.
When we would scale \eqref{eq1.1} apriori such that all rows have equal norm, it is expected that BK and RBK behave similarly.

\begin{figure}[h]
\renewcommand{\figurename}{Fig.}
\centering
\subfigure
{  \hspace{-6mm}
 	\begin{minipage}[b]{.32\linewidth}
        \centering
        \includegraphics[scale=0.039]{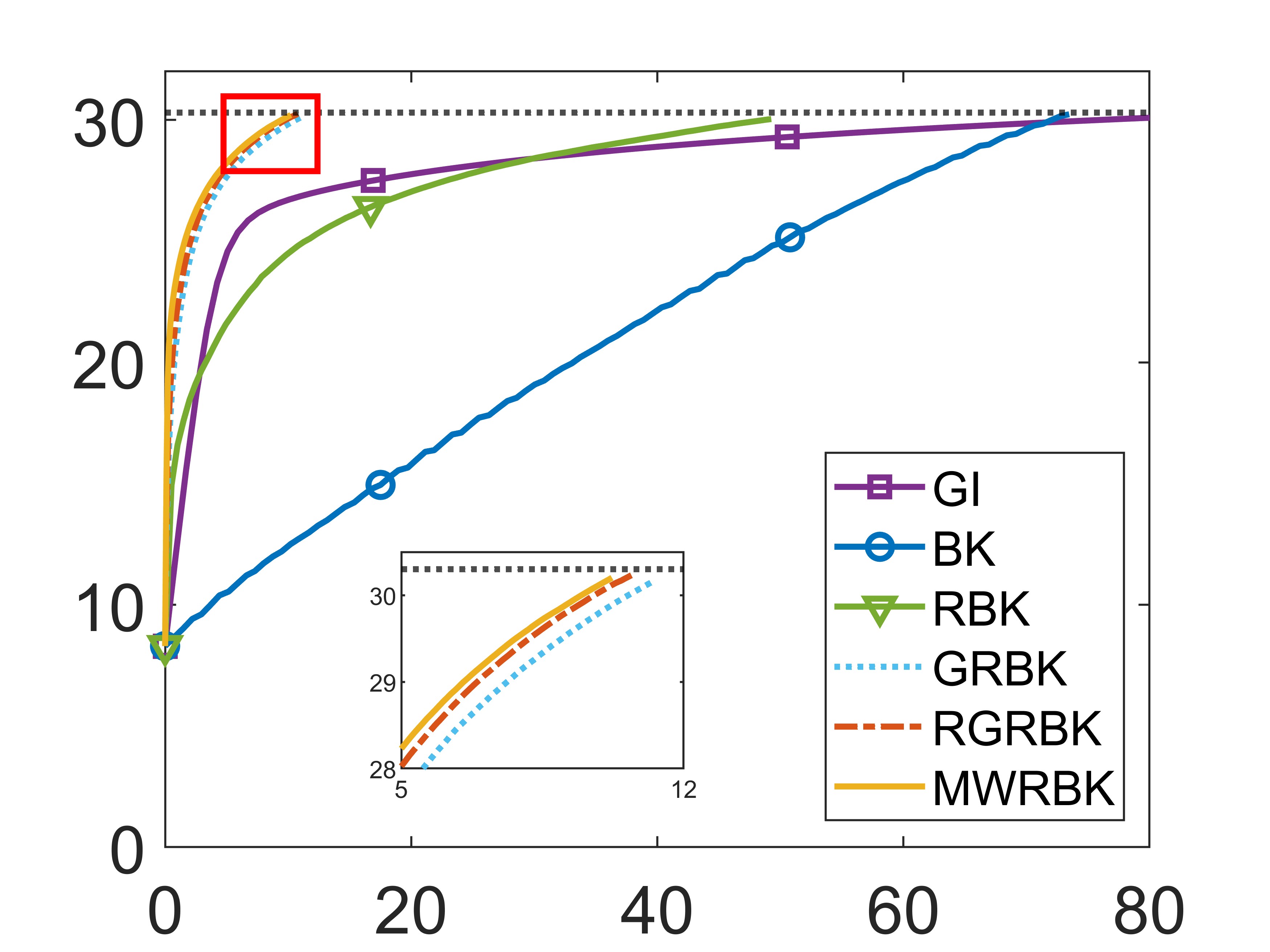}
    \end{minipage}
}
\subfigure
{
 	\begin{minipage}[b]{.31\linewidth}
        \centering
        \includegraphics[scale=0.039]{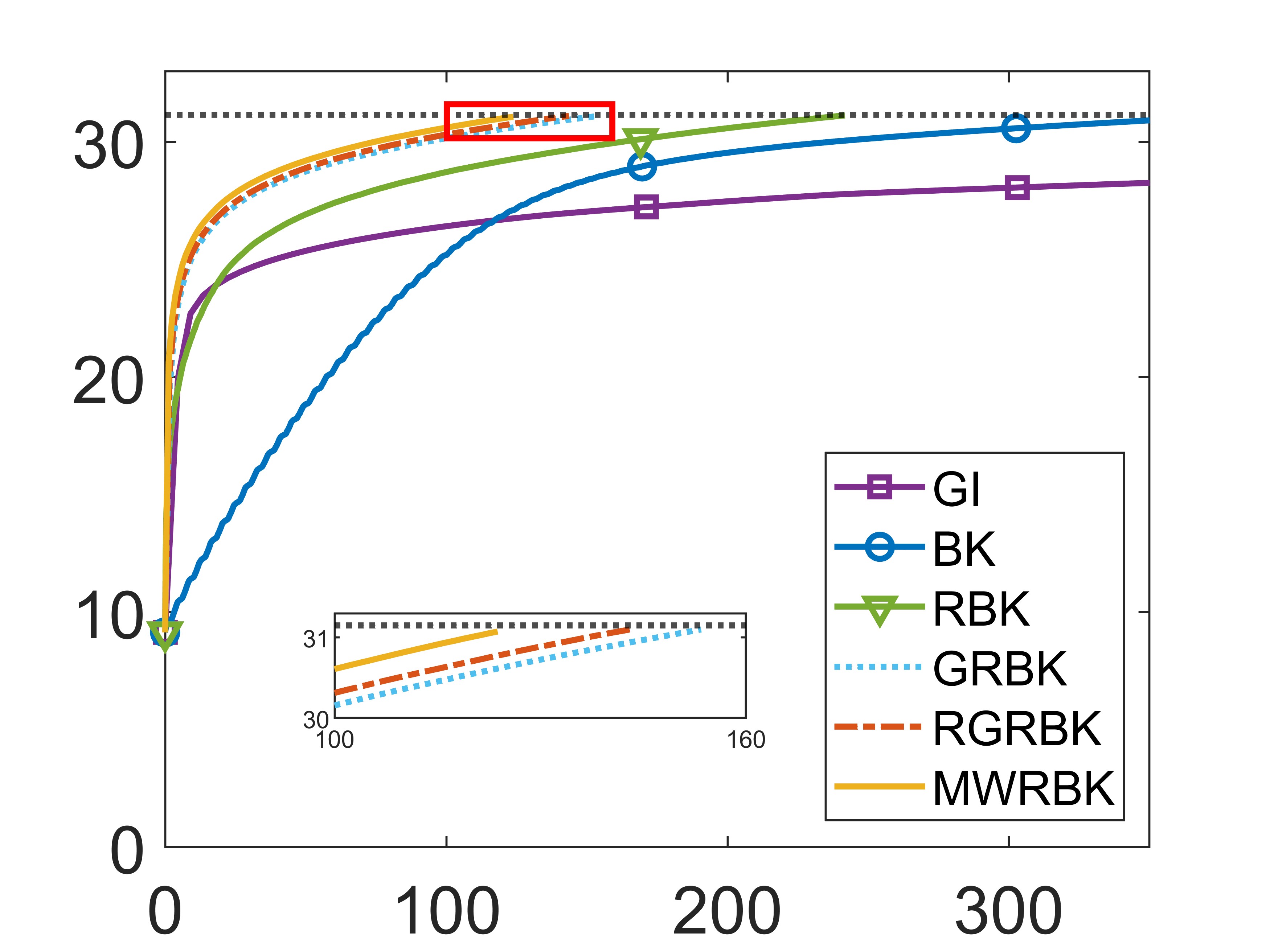}
    \end{minipage}
}
\subfigure
{
 	\begin{minipage}[b]{.31\linewidth}
        \centering
        \includegraphics[scale=0.039]{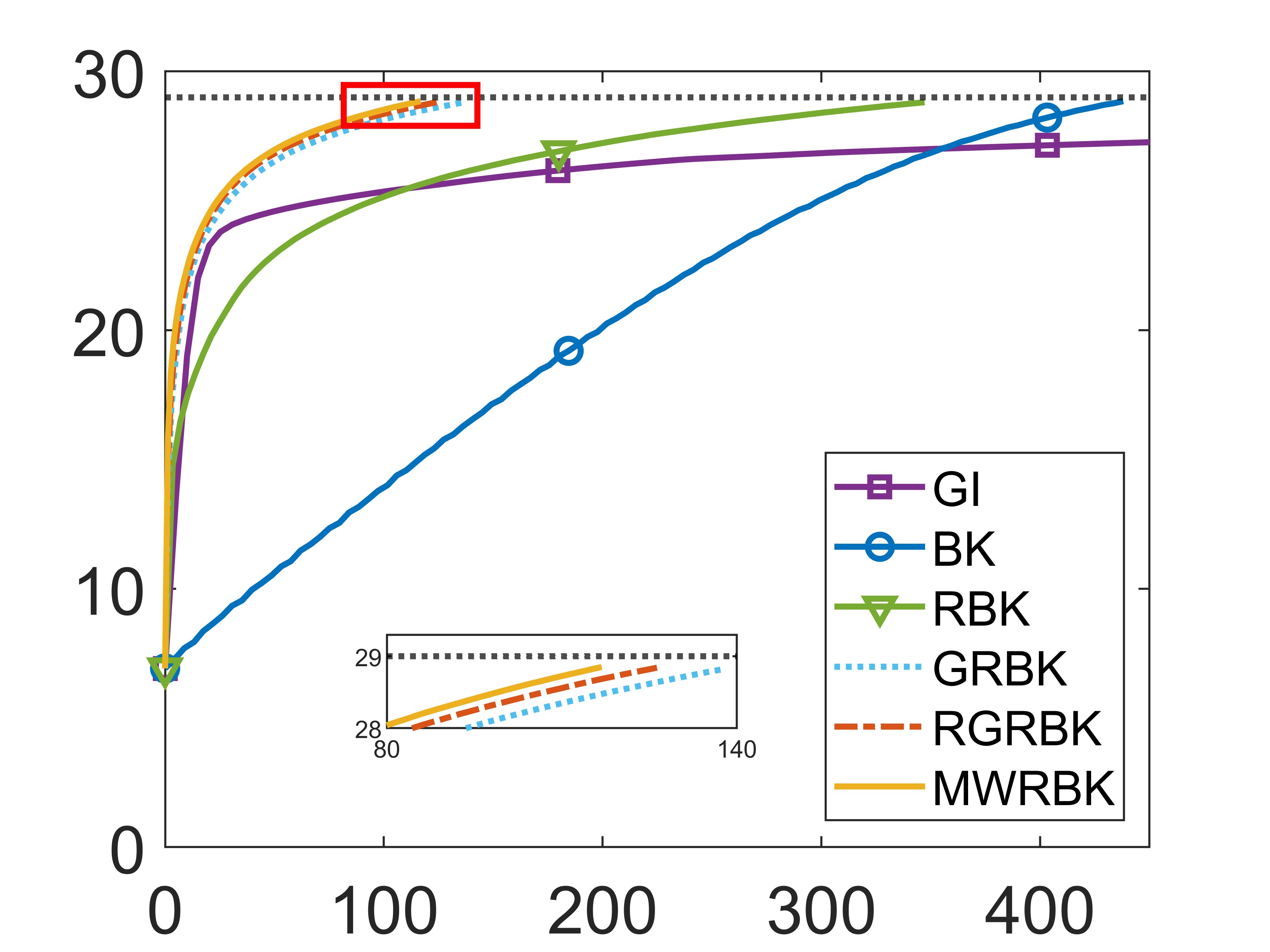}
    \end{minipage}
}
\caption{\label{PSNR_CPU}{Relation between PSNR and CPU for the restored {\sf face} (left), {\sf bird} (middle), and {\sf mandril} (right) images using different methods.}}
\end{figure}

\end{experiment}

\begin{experiment}\label{exp6.2} \rm
In this experiment, we compare the recovery results obtained by the GI~\cite{ref8}, BK (Alg.~\ref{ME-BK}), RBK~\cite{ref12}, GRBK, RGRBK, and MWRBK methods. For a fair comparison and a good impression of the typical behavior of the various methods, we use the exact solution in the stopping criterion and terminate the iterations once $\text{RSE}\le 8\cdot10^{-2}$.
The PSNR values of the restored images obtained by different algorithms are identical: 30.23 ({\sf face}), 31.08 ({\sf bird}), and 28.85 ({\sf mandril}), respectively.
Table \ref{table10} presents the numerical results, and Fig.~\ref{restored} shows the restored images obtained by the MWRBK method.
We observe that the GI method requires the longest per-iteration and total computation time, which is consistent with its cubic computational complexity compared to the quadratic complexity of the other algorithms. Although the number of iterations required by GI varies across cases, it seems a less competitive method overall.

Although GRBK, RGRBK, and MWRBK incur higher CPU time per iteration (CPU/IT) than BK and RBK, they require significantly fewer iterations, resulting in a shorter overall computation time than BK and RBK. These results show that their overall convergence is faster than that of BK and RBK.

As the problem size increases, the SD of the randomized algorithms (RBK, GRBK, and RGRBK) increases slightly. For example, in experiment $\sf face$, the SD values of the randomized methods are less than $2$, while in experiment $\sf mandrill$, the SD values increase modestly but remain below $7$.

In addition, in unreported experiments we have compared the behavior of three BK methods (Algorithms~\ref{ME-BK}, \ref{ME-BK1}, and \ref{alg.AX=C}).
For the case of this $3 \times 3$ matrix $A_{\text{c}}$, these BK methods generally have very similar number of iterations and runtimes, irrespective of how close $A_{\text{c}}$ is to the identity.
Proposition~\ref{Alg._3_and_4_are_equivalent} shows that Algorithms \ref{ME-BK1} and \ref{alg.AX=C} are equivalent given the fact that $A_{\text{c}}$ is nonsingular, and our results indeed confirm that the numbers of iterations are identical.

\begin{table}[htb!]
\footnotesize
\caption{The PSNR, IT, CPU, SD, Range, and CPU/IT of the restored images by the various methods, where all methods, except for GI, use a rank-one update at each iteration.}
\label{table10}
\setlength{\tabcolsep}{1.5mm}{
\begin{tabular}{lccccccc}
\hline\rule{0pt}{2.5ex}%
\multirow{2}{*}{Method}&
  & \multicolumn{6}{c}{{\sf face}}\\
\cmidrule{3-8}\rule{0pt}{2.0ex}%
& &PSNR&IT&CPU&SD&Range&CPU/IT\\
 \hline\rule{0pt}{2.8ex}%
GI & &  30.23& \ph137689& 113.8& --& --& $3.0\cdot 10^{-3}$\\
BK & & 30.23&281401&105.2&--&--&$3.7\cdot 10^{-4}$\\
RBK & & 30.23&139442& \ph167.0&1.73&[64.1, 71.3]&$4.8\cdot 10^{-4}$\\
GRBK & & 30.23& \phantom{1}11534& \ph119.0&0.58&[18.1, 20.0]&$1.7\cdot 10^{-3}$\\
RGRBK & & 30.23& \phantom{1}11191& \ph117.8&0.70&[16.7, 18.9]&$1.6\cdot 10^{-3}$\\
MWRBK & & 30.23& \phantom{1}12045& \ph117.4&--&--&$1.5\cdot 10^{-3}$\\
\hline\rule{0pt}{2.5ex}%
&
  & \multicolumn{6}{c}{{\sf bird}}\\
\cmidrule{3-8}\rule{0pt}{1.5ex}%
GI & &  31.08& 602495& 1919.3& --&--& $3.2\cdot 10^{-3}$\\
BK & & 31.08&821648& \ph1330.0&--&--&$4.0\cdot 10^{-4}$\\
RBK & & 31.08&390479& \ph1207.8&4.11&[200.5, 217.2]&$5.3\cdot 10^{-4}$\\
GRBK & & 31.08& \phantom{1}97871& \ph1167.8&3.09&[163.1, 173.2]&$1.7\cdot 10^{-3}$\\
RGRBK & & 31.08& \phantom{1}96407& \ph1158.4&2.10&[154.6, 162.1]&$1.6\cdot 10^{-3}$\\
MWRBK & & 31.08& \phantom{1}97475& \phantom{1}147.2&--&--&$1.5\cdot 10^{-3}$\\
\hline\rule{0pt}{2.5ex}%
&
  & \multicolumn{6}{c}{{\sf mandril}}\\
\cmidrule{3-8}\rule{0pt}{1.5ex}
GI & &  28.85& 254358& 1224.6& --& --& $4.8\cdot 10^{-3}$\\
BK & & 28.85&595118& \ph1474.2& --&--&$8.0\cdot 10^{-4}$\\
RBK & & 28.85&375388& \ph1314.1& 6.41&[299.1, 328.7]&$8.4\cdot 10^{-4}$\\
GRBK & & 28.85& \phantom{1}59750& \ph1153.8& 2.29&[150.6, 158.3]&$2.6\cdot 10^{-3}$\\
RGRBK & & 28.85& \phantom{1}58563& \ph1145.1& 2.52&[141.0, 149.4]&$2.5\cdot 10^{-3}$\\
MWRBK & & 28.85& \phantom{1}58522& \phantom{1}123.8& --&--&$2.1\cdot 10^{-3}$\\
\hline
    \end{tabular}}
\end{table}

\begin{figure}[htb!]
\renewcommand{\figurename}{Fig.}
\center{\includegraphics[scale=1.6]{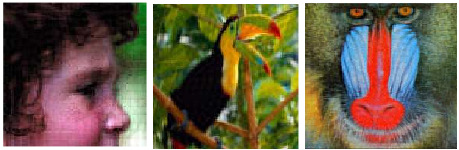}}
\caption{Restored images obtained by the MWRBK method for Experiment \ref{exp6.2}.}\label{restored}
\end{figure}
\end{experiment}

Based on the results provided in Tables~\ref{sizes} and \ref{table10}, we observe a proportional relation between the dofs and the computational cost per iteration (CPU/IT) for the three test images.
For instance, in the RBK method, the CPU/IT increases from $8.4 \cdot 10^{-4}$ for {\sf face} to $4.8 \cdot 10^{-4}$ for {\textsf mandril}, a ratio of approximately $1.75$. The dofs of {\textsf mandril} are approximately $1.77$ times those of {\sf face}.
This consistent trend shows the influence of the problem size on per-iteration computational cost.
The example {\sf bird} requires the largest number of iterations, which might be due to the many edges in the figure.
There is no clear relation of the number of iterations with the condition number of the within-channel blurring matrices $A$, as {\sf bird} corresponds to the matrix having the smallest condition number of the three examples.

Finally, some words about the scaling of the methods for larger images.
We take larger $140\times 140$ and $280\times 280$ versions of {\sf face};
with $140^2 \cdot 3 = 58800$ and $280^2 \cdot 3 = 235200$ dofs, these have ca 2.3 and 9.3 more unknowns than the original $92 \times 92$ image.
The CPU/IT scales exactly linear with the dofs.
The number of iterations increases for larger images, probably because of having more details.
For the $140\times 140$ {\sf face}, IT is ca 19000 instead of 12000.
In conclusion, the CPU/IT is proportional to the dofs, and the number of iterations seems to depend on the complexity of the image.

\section{Conclusions}\label{section7}
We have explored deterministic and randomized Kaczmarz methods to solve the consistent matrix equation \eqref{eq1.1}, which arises commonly in many applications.
Our methods can be used for $A$ and $B$ of any size (i.e., overdetermined and underdetermined cases), with the exception of Algorithms~\ref{ME-BK1} and~\ref{alg.AX=C}, which require $B$ to have full column or row rank.
The gradient type GI method in \cite{ref8} uses a full-rank update in each iteration.
The methods in \cite{ref11} use a rank-one addition with a single element of $C$ and one column of $B$ in each step.
All of our proposed methods also perform a rank-one update at each iteration using one row of $C$ and the entire matrix $B$.
As summarized in Table~\ref{complexity}, the methods in \cite{ref11} and this paper require quadratic costs in terms of the matrix dimensions $m, n, p, q$, while the GI method in \cite{ref8} incurs a cubic complexity; see also Section~\ref{section6}.

We have first studied a (deterministic, cyclic) BK method in Section~\ref{section3.1} (Algorithm~\ref{ME-BK}).
This is a straightforward extension of the standard Kaczmarz method to matrix equation (\ref{eq1.1}) and also a deterministic (cyclic) version of the RBK method of~\cite{ref12}.
We have shown that this method converges to the solution $X^0_{\ast}$ $(=A^+CB^++X^0-A^+AX^0BB^+)$ if $0<\alpha<2\,\|B\|^{-2}$.
We have also considered variants of this BK method under the assumption that $B$ is of full column rank (Section~\ref{section3.2}, Algorithm~\ref{ME-BK1}) or full row rank (Section~\ref{section3.21}, Algorithm~\ref{alg.AX=C}).
These additional constraints enable us to derive a matrix expression for a sweep (cycle) of the BK method.
Note that Algorithm~\ref{ME-BK} imposes no requirements on $B$, but the last two methods require $B$ to be of full row or column rank (with nonsingularity as a special case) and of modest size.
In addition, as shown in Proposition~\ref{Alg._3_and_4_are_equivalent}, the last two methods are equivalent when the coefficient matrix $B$ is nonsingular.
In Experiment~\ref{exp6.2}, we have confirmed that these three BK methods exhibit a similar convergence behavior when $B$ is $3 \times 3$.

Next, we have proposed and studied the GRBK method (Algorithm~\ref{ME-GRBK}) and its relaxed and deterministic variants: RGRBK (Algorithm~\ref{ME-RGRBK}) and MWRBK (Algorithm~\ref{ME-MWRBK}) in Section~\ref{section4}, which select rows based on the residual norm.
Based on the experiments in Section~\ref{section6}, the extra cost of computing this residual norm is worthwhile; in addition, the residual can also serve as a suitable stopping criterion.
We have proven that they converge to the unique minimum norm solution $A^+CB^+$ if $0<\alpha<2\,\|B\|^{-2}$.
According to the convergence analysis in Sections~\ref{section3} and \ref{section4}, the upper bounds of the convergence factors for the proposed methods are minimized when $\alpha = \|B\|^{-2}$.
As shown in Section~\ref{section4.3}, the upper bound of the convergence factor for the GRBK, RGRBK, and MWRBK methods is not larger than that of RBK. Thus, it is reasonable to expect that they may require fewer iterations to achieve convergence.
The experiments in Sections~\ref{section5} and~\ref{section6} show that this is indeed the case.
The RGRBK method reduces to the GRBK method when $\theta = \tfrac12$ and to the MWRBK method when $\theta = 1$, as illustrated in Fig.~\ref{deltafig} and Table~\ref{index}.
Its convergence factor upper bound is decreasing with $\theta$. Therefore, the upper bound for the convergence factor is minimal at $\theta = 1$, where RGRBK coincides with MWRBK.

Finally, based on the theoretical analysis and experimental results, the overall performance of the proposed methods can be summarized as follows.
The BK method converges asymptotically (see Theorem~\ref{theorem3.3}); its convergence rate may be sensitive to the ordering of the rows in $A$. It may be (much) faster than its randomized counterpart (RBK), but it may also be slower. With a randomized initial ordering of the rows, the BK method typically behaves similarly to the randomized method RBK, without the drawback of the variance of RBK.
We note although its convergence is often slower than other methods (see Section~\ref{section6}), BK is valuable for understanding the influence of row selection through comparisons with the randomized (RBK) and greedy (GRBK, RGRBK, MWRBK) counterparts.
The greedy extensions (GRBK, RGRBK, and MWRBK) consistently achieve faster convergence than RBK, also observed in the numerical tests in Sections~\ref{section5} and \ref{section6}.
Although the greedy approaches are slower than the BK and RBK methods per iteration, they use fewer iterations with a lower runtime.
The randomized algorithms (RBK, GRBK, and RGRBK) have modest standard deviations in the experiments.
Overall, both theory and experiments suggest that the (deterministic) MWRBK method (Algorithm~\ref{ME-MWRBK}) seems to be the most attractive among the considered methods in terms of the number of iterations and computational time.
Finally, we remark that in this paper we have considered matrix equations without noise. Kaczmarz type methods may also be particularly suitable for noisy right-hand sides $C$; this is left for follow-up work.

\bmhead{Acknowledgments}
We are very grateful to three expert referees for their valuable suggestions.
The work is supported by the Fundamental Research Funds for the Central Universities (No.~2024YJS111) and a China Scholarship Council grant (No.~202407090119).

\bmhead{Conflict of interest} Not applicable.

\end{document}